\pgfplotsset{compat=1.18} 
\theoremstyle{plain}
\newtheorem{thm}{Theorem}[section]
\newtheorem{prop}[thm]{Proposition}
\newtheorem{ques}[thm]{Question}
\newtheorem{lem}[thm]{Lemma}
\newtheorem{fact}[thm]{Fact}
\newtheorem*{bp*}{Basic Problem}
\theoremstyle{definition}
\newtheorem{dfn}[thm]{Definition}
\theoremstyle{remark}
\newtheorem{rmk}[thm]{Remark}
\newtheorem*{claim*}{Claim}
\newtheorem{claim}[thm]{Claim}
\newtheorem{conv}[thm]{Convention}
\renewcommand{\l}{\mathfrak{l}}
\newcommand{\Lin}{\mathscr{L}}
\newcommand{\U}{\mathscr{U}}
\newcommand{\s}{\mathfrak{s}}
\newcommand{\n}{\mathfrak{n}}
\newcommand{\m}{\mathfrak{m}}
\newcommand{\R}{\mathbb{R}}
\newcommand{\N}{\mathbb{N}}
\newcommand{\SR}{SL_2(\R)\ltimes \R^2}
\newcommand{\sr}{\mathfrak{sl}_2(\R)\ltimes \R^2}
\renewcommand{\sl}{\mathfrak{sl}_2(\R)}
\newcommand{\SL}{SL_2(\R)}
\newcommand{\GR}{GL_2(\R)\ltimes \R^2}
\newcommand{\GL}{GL_2(\R)}
\newcommand{\pf}{\pitchfork}
\newcommand{\Hfir}{Z}
\newcommand{\Hsec}[1]{A({#1})}
\newcommand{\Hthi}{U}
\newcommand{\Hfou}[1]{B({#1})}
\newcommand{\Hfif}{B^\prime}
\newcommand{\Hsix}{D}
\newcommand{\hfir}{\mathfrak{z}}
\newcommand{\hsec}[1]{\mathfrak{a}(#1)}
\newcommand{\hthi}{\mathfrak{u}}
\newcommand{\hfou}[1]{\mathfrak{b}({#1})}
\newcommand{\hfif}{\mathfrak{b}^\prime}
\newcommand{\hsix}{\mathfrak{d}}
\renewcommand{\a}{\mathfrak{a}}
\newcommand{\pmat}[1]{\begin{pmatrix}#1\end{pmatrix}}
\newcommand{\Vmat}[1]{\begin{Vmatrix}#1\end{Vmatrix}}
\newcommand{\spnR}[1]{\langle #1\rangle}
\renewcommand{\arraystretch}{1.5}
\newcommand{\MSC}[1]{\gdef\@MSC{#1}}
\newcommand{\keywords}[1]{\gdef\@keywords{#1}}
\title{Classification of connected proper pairs in the affine transformation group}
\author{Shunsuke Miyauchi}
\date{\today}
\begin{document}

\maketitle
\begin{abstract}
    Let $(L, H)$ be closed subgroups of a locally compact group $G$. The pair $(L, H)$ is said to be \emph{proper} if the action of $L$ on the homogeneous space $G/H$ is proper.

    We give a complete list of connected closed proper pairs in the affine transformation group of $\R^2$. This result extends Kobayashi's classification of connected closed subgroups of the affine transformation group of $\R^2$ acting properly on $\R^2$.
\end{abstract}

\begin{flushleft}
\textbf{MSC2020:} Primary 57S30, Secondary 22F30.
\end{flushleft}
\noindent\textbf{Key words and phrases:}
proper action; discontinuous group; affine transformation group.

\tableofcontents

\section{Introduction}\label{sec:intro}
Consider a pair $(L,H)$ of closed subgroups of a Lie group $G$. In the theory of transformation groups, it is crucial to find effective criteria for properness (see the survey \cite{Kobsurv} by Kobayashi, who initiated this line of study). We say that the pair $(L,H)$ is proper if $L$ acts properly on the homogeneous space $G/H$; equivalently, $H$ acts properly on $G/L$. The pair $(L,H)$ is always proper if either $L$ or $H$ is compact. However, when both $L$ and $H$ are non-compact, determining whether the pair $(L,H)$ is proper becomes difficult.

Following the insights presented in Kobayashi~\cite{Kobpre,Kobsurv}, we briefly review previous works.

The first computable criterion was established by Kobayashi~\cite{Kob89} when $G,L,H$ are reductive. This result was later generalized by Benoist~\cite{Ben96} and Kobayashi~\cite{Kob96} to the case where $G$ is reductive, but $L,H$ are not necessarily reductive.

Lipsman~\cite{Lip95} considered whether an analog of the properness criterion for reductive groups holds for nilpotent groups. In the setting where $G$ is nilpotent and $L$ and $H$ are connected, he conjectured that the (CI) condition (see Definition~\ref{def:CI}) is equivalent to properness. The (CI) condition, which is defined by Kobayashi~\cite{Kob90}, is weaker than properness in general and easier to check. This conjecture was solved affirmatively by Nasrin~\cite{Nas01}, Yoshino~\cite{Yos07b}, Baklouti-Khlif~\cite{BK05} for $n$-step nilpotent groups with $n\leq 3$, but Yoshino~\cite{Yos05} gave a counterexample in the $n$-step case for $n=4$.

On the other hand, there is no verifiable criterion when
$G$ is neither reductive nor nilpotent. In this setting, we consider the following question (Subsection~\ref{subsection:mainresult}):
\begin{ques}
Classify proper pairs $(L,H)$ of connected closed subgroups of the affine transformation group of $\R^2$.
\end{ques}

We briefly review the affine transformation group.  The affine transformation group of $\R^n$, namely $GL_n(\R)\ltimes \R^n$, is realized as the subgroup of $GL_{n+1}(\R)$ given by
\[
GL_n(\R)\ltimes \R^n \coloneqq \left\{ \begin{pmatrix} g & v \\ 0 &1\end{pmatrix} \colon g \in GL_n(\R),\, v \in \R^n \right\}.
\]
We denote such an element $\begin{pmatrix} g & v \\0 &1 \end{pmatrix}$  by $( g , v)$, and refer to $g$ as the linear part and $v$ as the translational part.
For the element $(g , v) \in GL_n(\R) \ltimes \R^n$, we define the natural projections $\Lin \colon GL_n(\R) \ltimes\R^n \to GL_n(\R), \U \colon GL_n(\R)\ltimes\R^n \to \R^n$ by
\[ \Lin(g , v) \coloneqq g, \quad \U(g , v) \coloneqq v.\]

\subsection{The case $G=\SR$}\label{subsection:properSR}
When $G=\SR$, there are only finitely many equivalence classes of connected subgroups with respect to the relation $\sim$  (Definition~\ref{def:proper}). Hence the classification is relatively simple. In contrast, when $G=\GR$, the set of such equivalence classes has the cardinality of the continuum.
\begin{thm}\label{thm:properSR}
\begin{enumerate}
\item Up to the equivalence relation $\sim$, the non-trivial connected subgroups are as follows:
\[ \R^2,\SL,S,L,M,N,\]
where $S,L,M,N$ are defined below.
\item The properness of pairs among $\SL,$ $\R^2,$  $S,$ $L,$ $M,$ and $N$ is summarized in Table~\ref{tab:properSR}. In Table~\ref{tab:properSR}, we mark an entry with $\pf$ if the corresponding pair is proper; otherwise the entry is left blank.
\end{enumerate}
\end{thm}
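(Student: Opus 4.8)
The plan is to treat the two parts separately: first reduce the classification in Part~(1) to a classification of subalgebras of $\g=\sr$, and then establish Part~(2) by testing properness for each pair against the defining criterion, filling in Table~\ref{tab:properSR} entry by entry.

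For Part~(1) I would work at the Lie algebra level, exploiting the short exact sequence of $\sl$-modules $0\to\R^2\to\g\to\sl\to 0$, where $\R^2$ carries the standard representation. To a subalgebra $\h\subseteq\g$ I would attach two invariants: its image $\bar\h\coloneqq\Lin_*(\h)\subseteq\sl$ under the differential of $\Lin$, which is a subalgebra of $\sl$, and its intersection $\h_0\coloneqq\h\cap\R^2$, which is a $\bar\h$-submodule of $\R^2$. First I would run through the conjugacy classes of subalgebras of $\sl$ (the zero subalgebra; the three one-dimensional types — elliptic, hyperbolic, parabolic; the Borel subalgebra; and all of $\sl$) and, for each, list the admissible submodules $\h_0$, which are constrained by the $\bar\h$-module structure of the standard representation (for the full $\sl$ only $0$ and $\R^2$ survive by irreducibility, whereas the hyperbolic case admits the two coordinate lines, the parabolic and Borel cases a single invariant line, and the elliptic case no proper line). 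Then I would reconstruct $\h$ as a graph extension of $\bar\h$ by a splitting $\sigma\colon\bar\h\to\R^2/\h_0$, the bracket condition forcing $\sigma$ to be a $1$-cocycle; quotienting the cocycles by coboundaries (i.e.\ by conjugation by translations) and by the equivalence $\sim$ should eliminate the continuous parameters and collapse the data to the finite list $\R^2,\SL,S,L,M,N$.

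For Part~(2) I would use the working reformulation that $(L,H)$ is proper precisely when $L\cap CHC$ is relatively compact for every compact $C\subseteq G$, and organize the computation around the single hyperbolic parameter of the Cartan projection $\mu$ of $\SL$. Because $\mu$ is essentially one-dimensional, the ways a subgroup escapes to infinity are easy to parametrize: either along the $\SL$-direction measured by $\mu$, or along the translation direction, whose size is rescaled by the eigendirection expansion and contraction dictated by the $\SL$-action on $\R^2$. For the pairs I expect to be proper I would verify relative compactness directly by showing the divergence directions of $L$ and $H$ stay separated, and for the pairs that fail I would exhibit an explicit sequence $g_n\to\infty$ lying in $L\cdot C\cdot H$ for a fixed compact $C$.

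The hard part will be the mixed subgroups $S,L,M,N$, whose elements carry both a nontrivial linear part and a nontrivial translational part coupled through the $\SL$-action. For pairs built from these the escape to infinity mixes the hyperbolic scaling recorded by $\mu$ with the exponential expansion or contraction of the translational component along the eigendirections, so neither the purely reductive criterion nor a purely translational estimate settles the question by itself. Deciding, for these borderline entries, whether the coupled divergence keeps $L\cap CHC$ bounded — and constructing the right escaping sequence when it does not — is where the delicate estimates will be concentrated.
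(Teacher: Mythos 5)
Your plan for Part~(1) does reproduce the conjugacy classification of subalgebras of $\sr$ — the graph/cocycle analysis over the subalgebras of $\sl$ is essentially how the paper obtains its Lemma on subalgebras (in the fashion of \cite{CKZ24}) — but the step you delegate to ``quotienting by the equivalence $\sim$'' is mischaracterized, and that is where a genuine idea is missing. After conjugation the list of subalgebras of $\sr$ is already \emph{finite}, with no continuous parameters left to eliminate; what $\sim$ must do is merge non-conjugate subgroups of different dimensions into the six classes: e.g.\ $\SL\sim\{\mathrm{diag}(e^a,e^{-a})\}\sim\{\text{one-parameter unipotent}\}\sim\{\text{Borel}\}$, and $\R^2\sim\R\sim SO(2)\ltimes\R^2$. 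These equivalences do not follow from coboundary quotients or conjugation by translations; the paper derives them from the Cartan decomposition $G=KAK$ and, crucially, Kostant's decomposition $G=KNK$, which is exactly what makes a one-parameter unipotent subgroup equivalent to all of $\SL$. Your proposal names no mechanism that would produce them. You also omit the converse half of the classification: that $\R^2,\SL,S,L,M,N$ (together with $\{e\}$ and $\SR$) are pairwise inequivalent. Most separations follow from the properness table via Proposition~\ref{prop:simpf}, but $L$ and $\SR$ have identical (all-blank) rows, and the paper needs a bespoke argument — a \emph{non-compact subset} $S^\prime=\{\pmat{1 & 0 & t^2 \\ t & 1 & 0}\}$ with $S^\prime\pf L$ but $S^\prime\not\pf\SR$ — to distinguish them; nothing in your scheme supplies this.

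For Part~(2), two cautions. First, the Cartan projection of $\SL$ furnishes no criterion here, since the ambient group is not reductive; the paper never uses $\mu$ in this section (only later, for $\GL$). Instead it collapses the table using two reductions — a non-compact intersection kills properness, and Kobayashi's result \cite[Prop.\ A.2.1]{Kob90} determines the entire $\SL$ row — after which only three claims remain: $\R^2\pf N$, $S\pf N$, and $L\not\pf N$. These are settled by, respectively, transporting the $N$-action on $\GR/\R^2\cong\GL$ through the isomorphism $\Lin|_N$, a matrix-entry asymptotic analysis with the $\lesssim_\times$ calculus (forcing the $(2,1)$-entry of a would-be product in $S$ to grow like $|t_n|$, a contradiction), and the explicit bounded conjugators $\pmat{1 & -\frac{2}{t} & 0 \\ 0 & 1 & 0}$ carrying elements of $N$ into $L$. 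Your proposal correctly locates the difficulty in exactly these mixed pairs, but it supplies neither the reduction nor the decisive estimates, so the entries of Table~\ref{tab:properSR} that carry all the content ($S$ versus $N$ proper, $L$ versus $N$ not) remain unproved; ``showing the divergence directions stay separated'' is the problem restated, not an argument.
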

Here $S,L,M,N$ denote the following subgroups of $\SR$:
\begin{align*}
 S&\coloneqq \{ \begin{pmatrix} e^a & 0 & e \\ 0 & e^{-a} & 0  \end{pmatrix}\colon a,e \in \R\}, 
 L \coloneqq  \{ \begin{pmatrix} 1 & 0 & 0 \\ b & 1 & f  \end{pmatrix}\colon b,f \in \R\},\\
 M &\coloneqq  \{ \begin{pmatrix} 1 & 0 & b \\ b & 1 & e  \end{pmatrix}\colon b,e \in \R\} \supset N \coloneqq \{ \begin{pmatrix} 1 & 0 & b \\ b & 1 & \frac12 b^2  \end{pmatrix}\colon b \in \R\}.
 \end{align*}
 
 Recall that $\pf$ is the binary relation introduced by Kobayashi~\cite{Kob96} generalizing the concept of proper actions (Definition~\ref{def:proper}). 

\begin{table}[hbtp]
\caption{Properness of pairs of connected subgroups of $\SR$}\label{tab:properSR}
\centering
\renewcommand{\arraystretch}{1.5}
\begin{tabular}{c@{\quad}cccccc}
\toprule
 \diagbox{$L$}{$H$}& $SL_2(\R)$ & $\R^2$ & $S$ & $L$ & $M$ & $N$ \\
\midrule
$SL_2(\R)$ &  & $\pf$ &  &  & $\pf$ & $\pf$ \\
$\R^2$ & $\pf$ &  &  &  &  & $\pf$ \\
$S$ &  &  &  &  &  & $\pf$ \\
$L$ &  &  &  &  &  &  \\
$M$ & $\pf$ &  &  &  &  &  \\
$N$ & $\pf$ & $\pf$ & $\pf$ &  &  &  \\
\bottomrule
\end{tabular}
\end{table}

\subsection{The case $G=\GL$}\label{subsection:properGL}
In this case, proper pairs can be classified by a direct application of the properness criterion  \cite{Kob89,Kob96,Ben96} for reductive groups.

\begin{thm}\label{thm:properGL}
\begin{enumerate}
\item Up to the equivalence relation $\sim$, the non-trivial connected subgroups of $\GL$ are as follows:
\[ Z,U,A(\alpha),B(\alpha),\]
where $\alpha \in \R$ and $Z,U,A(\alpha),B(\alpha)$ are defined below.
\item The properness of the pairs among the connected subgroups $Z,U,A(\alpha),B(\alpha)$ is described in the following Table~\ref{tab:properGL}. Here, each entry in the table that specifies a condition on the parameters $\alpha ,\beta$ provides a necessary and sufficient condition for the corresponding pair to be proper.
    \end{enumerate}
\end{thm}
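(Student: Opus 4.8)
The plan is to use that $\GL = GL_2(\R)$ is reductive and to apply the Kobayashi--Benoist properness criterion \cite{Kob89,Kob96,Ben96} directly, with no translational bookkeeping. Fix the diagonal maximal split abelian subspace $\mathfrak{a} = \{\mathrm{diag}(a_1,a_2)\}$, the maximal compact subgroup $K=O(2)$, and the closed dominant chamber $\mathfrak{a}_+ = \{(a_1,a_2) : a_1 \ge a_2\}$, so that the Cartan (singular value) projection $\mu \colon \GL \to \mathfrak{a}_+$ sends $g$ to $(\log\sigma_1(g), \log\sigma_2(g))$ with $\sigma_1 \ge \sigma_2 > 0$ the singular values of $g$. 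The criterion asserts that for closed subgroups $L,H$ one has $L \pf H$ (that is, $L$ acts properly on $\GL/H$) if and only if $\mu(L) \pf \mu(H)$, where for subsets $P,Q \subseteq \mathfrak{a}$ the relation $P \pf Q$ means that $P \cap N_R(Q)$ is bounded for every $R > 0$, with $N_R(Q)$ the $R$-neighbourhood of $Q$; this relation is symmetric. Since every subgroup below is connected, each image $\mu(L)$ is, up to the bounded equivalence underlying $\sim$, a finite union of rays (or a line) through the origin, so the whole theorem reduces to deciding when two such ray configurations in the half-plane $\mathfrak{a}_+$ share an asymptotic direction.

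For part~(1) I would first classify the subalgebras of $\mathfrak{gl}_2(\R)$ up to $\Ad(\GL)$-conjugacy. Writing $\mathfrak{gl}_2(\R) = \mathfrak{z} \oplus \mathfrak{sl}_2(\R)$ with $\mathfrak{z} = \R I$ the centre, the one-dimensional subalgebras split into the scalar, nilpotent, real-diagonalisable (eigenvalue ratio a parameter) and elliptic types, and the remaining subalgebras are the Cartan and triangular ones. Because $\sim$ is governed by $\mu$ up to bounded distance (this is the content of the cited criterion), the redundant structure collapses: purely elliptic directions have relatively compact image and are $\sim$-trivial, rotation--scaling directions are absorbed into the scalar class, conjugate subgroups coincide, and subgroups with the same asymptotic ray configuration are identified. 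Carrying out this reduction, I expect exactly the classes $Z, U, A(\alpha), B(\alpha)$ to survive, and I would separate distinct parameter values by exhibiting their distinct ray configurations, thereby proving $A(\alpha) \not\sim A(\beta)$ and $B(\alpha) \not\sim B(\beta)$ for $\alpha \neq \beta$.

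For part~(2) I would compute $\mu$ on each representative --- for instance $\mu(Z)$ is the wall $\{a_1 = a_2\}$ and $\mu(U) = \{(u,-u) : u \ge 0\}$, while $\mu(A(\alpha))$ and $\mu(B(\alpha))$ are ray configurations whose directions are fixed by $\alpha$ after sorting the singular values --- and then evaluate $\pf$ for every pair. As the images are finite unions of rays from the origin, $\mu(L) \pf \mu(H)$ holds exactly when the two configurations share no common ray direction; translating the condition that they share a common ray into an equation relating the slopes of $A(\alpha)$ and $B(\alpha)$ (and the fixed directions of $Z$ and $U$) yields the parameter conditions recorded in Table~\ref{tab:properGL}. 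The symmetry of $\pf$ gives the symmetry of the table, and every self-pair is non-proper because a configuration always lies within bounded distance of itself.

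The main obstacle is the sharpness required in part~(2): I must determine the Cartan projections of the parametrized families $A(\alpha)$ and $B(\alpha)$ exactly, as functions of both the parameter and the group coordinate, where the sorting of singular values splits each one-parameter orbit into two rays whose directions depend non-linearly on the parameter. I then have to handle with care the degenerate configurations in which a ray of $\mu(L)$ becomes parallel to a ray of $\mu(H)$, or collapses onto the wall $a_1 = a_2$: these borderline cases are exactly where properness fails, so locating them precisely is what upgrades merely sufficient conditions to the necessary-and-sufficient conditions on $\alpha, \beta$ asserted in the theorem.
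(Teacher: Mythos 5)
Your overall strategy---push everything through the Cartan projection and the reductive properness criterion of Kobayashi--Benoist---is exactly the paper's route, and it does carry part~(1) (the paper gets the $\sim$-identifications from the KAK and Kostant decompositions, Fact~\ref{fact:kakknk}, e.g.\ $U'\sim\SL$ and $D\sim B'\sim P\sim\GL$, while you would get them from the $\sim$-criterion of Fact~\ref{fact:pc}~(2); either works). But your central structural claim for part~(2), that each image $\mu(L)$ is within bounded distance of a finite union of rays so that properness reduces to ``no shared asymptotic ray direction,'' is false, and it fails exactly at the group $U$. Your formula $\mu(U)=\{(u,-u):u\ge 0\}$ is a miscomputation: that is the projection of the unipotent group $U'=\{\pmat{1&0\\t&1}\}$, which by the Kostant decomposition satisfies $U'\sim\SL$ and is therefore not on the list. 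The group actually called $U$ is $\{e^t\pmat{1&0\\t&1}\}$, with $\det=e^{2t}$, and the paper's Lemma~\ref{lem:muh3h4}~(1) shows $\mu(U)$ is a curve whose \emph{direction} tends to the wall direction $(1,1)$ while its \emph{distance} from the wall grows like $\log|t|$, hence unboundedly. So $\mu(U)$ is not within bounded distance of any ray configuration, and your test returns the wrong answer on the entry $(Z,U)$: the two sets share the asymptotic direction $(1,1)$, so your rule declares the pair non-proper, whereas the logarithmic divergence gives $Z\pf U$, as the table asserts. (With your miscomputed $\mu(U)$, the entry $(A(0),U)$ would also come out non-proper instead of proper.) Your closing heuristic that the borderline ``parallel direction'' cases are exactly where properness fails is contradicted by this very entry; the whole point of Lemma~\ref{lem:muh3h4} is that a logarithmic deviation suffices for properness.

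A second inaccuracy of the same kind: $\mu(B(\beta))$ is not a union of rays but the full two-dimensional sector $\{(x,y)\in\a_+ : y\le\frac{\beta-1}{\beta+1}x,\ y\le\frac{\beta+1}{\beta-1}x\}$ (Lemma~\ref{lem:muh3h4}~(2): for fixed $t$, as $|u|$ grows the projection sweeps the ray $x+y=2\beta t$ from the boundary point outward). The correct test of $A(\alpha)$ against $B(\beta)$ is therefore whether the rays of $\mu(A(\alpha))$ lie inside the closed sector---non-proper precisely when $|\alpha|\le|\beta|$---not whether they coincide with one of its two boundary rays; for $|\alpha|<|\beta|$ the $A(\alpha)$-rays sit strictly inside the sector, share no direction with its boundary, and your ``no common ray direction'' rule would wrongly mark the pair proper. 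To repair the proposal you must compute $\mu(U)$ and $\mu(B(\beta))$ exactly, as the paper does, and then apply the genuine neighborhood condition $\mu(L)\cap\bar{B}_r(\mu(H))$ bounded for all $r>0$ from Fact~\ref{fact:pc}, rather than a direction-matching shortcut.
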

\begin{table}[htbp]
\centering
\caption{Properness of the pairs in $\GL$}
\label{tab:properGL}
\begin{tabular}{c@{\quad}cccc}
    \toprule
     \diagbox{$L$}{$H$} &  $Z$ & $A(\beta)$ & $U$ & $B(\beta)$  \\
    \midrule
    $Z$ & & $\pf$ & $\pf$ & $\pf$   \\
    $A(\alpha)$ &$\pf$ & $|\alpha| \neq |\beta|$ & $\pf$ & $|\alpha|>|\beta|$  \\
    $U $  &$\pf$ & $\pf$ &  & $\pf$   \\
    $B(\alpha)$  & $\pf$ & $|\alpha|<|\beta|$ & $\pf$ &   \\
    \bottomrule
    \end{tabular}
    \end{table}

Here $Z,U,A(\alpha),B(\alpha)$ denote the following subgroups of $\GL$, where $\alpha$ is a real constant.
\[ Z\coloneqq \left\{ \begin{pmatrix} e^t & 0 \\ 0 & e^t\end{pmatrix}\colon t\in \R\right\}, 
            U\coloneqq \left\{ \begin{pmatrix} e^t & 0 \\ te^t & e^t\end{pmatrix}\colon t \in \R \right\}, \]
            \[ A(\alpha)\coloneqq \left\{ \begin{pmatrix} e^{(\alpha +1)t} & 0 \\ 0 & e^{(\alpha -1)t}\end{pmatrix} \colon t \in \R\right\}\subset
            B(\alpha) \coloneqq \left\{ \begin{pmatrix} e^{(\alpha +1)t} & 0 \\ s & e^{(\alpha -1)t}\end{pmatrix}\colon s,t \in \R\right\}. \]

\subsection{Main result (the case $G=\GR$)}\label{subsection:mainresult}
As a test case beyond the setting where $G$ is reductive or nilpotent, Kobayashi~\cite[Prop.\ A.2.1]{Kob90} classified all pairs $(L,H)$ of connected subgroups that are proper in $G=\GR$ when $H=\GL$.
We extend Kobayashi's setting to arbitrary connected subgroups $H \subset \GR$. Table~\ref{tab:howtodetermineproperness} summarizes a procedure for determining properness of a pair $(L,H)$ from the types of the linear parts $\Lin(L)$ and $\Lin(H)$.

{
\renewcommand\arraystretch{1.5}
\setlength{\extrarowheight}{1.2pt}

\begin{table}[htbp]
  \centering
  \captionsetup{font=small}
  \caption{Procedure for determining properness from the linear parts}
  \label{tab:howtodetermineproperness}

  \begin{adjustbox}{max width=\linewidth}
  \begin{tabularx}{\linewidth}{
     >{\raggedright\arraybackslash}p{18mm}  
        >{\centering\arraybackslash}p{10mm}     
        >{\centering\arraybackslash}p{10mm}    
        >{\centering\arraybackslash}p{14mm} 
        *{4}{>{\centering\arraybackslash}X}     
  }
    \toprule
     \diagbox[width=23mm, innerleftsep=0pt, innerrightsep=0pt]{$\Lin(L)$}{$\Lin(H)$}
      & $\GL$
      & $D~\text{or}~B^\prime$
      & \parbox{17mm}{\vspace*{0.3ex}\centering contained \\  in $\SL$}
      & $Z$
      & $A(\beta)$
      & $U$
      & $B(\beta)$ \\
    \midrule\midrule
     $\GL$ & \multicolumn{7}{@{}c@{}}{\cite[Prop.\ A.2.1]{Kob90}} \\
     $D~\text{or}~B^\prime$ & - & \multicolumn{6}{@{}c@{}}{Theorem~\ref{prop:LinL=DB'}} \\
     \parbox{17mm}{\vspace*{0.3ex}\centering contained  \\in $\SL$}& - & - & Theorem~\ref{thm:properSR} & \multicolumn{4}{@{}c@{}}{Theorems~\ref{thm:properSR} and \ref{thm:slded}} \\
     $Z$ &- &- &- & $\not\pf$ & $\circledcirc$& $\circledcirc$& $\circledcirc$\\
     $A(\alpha)$ & - & -& - & - & {\bf (A)} &  $\circledcirc$ & {\bf (C)} \\ 
     $U$ & - & - & - & - & - & $\not\pf$ & $\circledcirc$ \\
     $B(\alpha)$ & - & - & - & - & - & - & {\bf (B)} \\
    \bottomrule
  \end{tabularx}
  \end{adjustbox}
\end{table}
}

We explain below the symbols in Table~\ref{tab:howtodetermineproperness} and state the theorems cited there. 

The symbol $\circledcirc$  means that we apply Theorems \ref{thm:properGL} and \ref{thm:RCI}.

The subgroups $Z,U,A(\alpha),B(\alpha)$ of $\GL (\alpha \in \R)$ are defined in Subsection~\ref{subsection:properGL}, and their dimensions are $1,1,1,2$ respectively. The connected subgroups  $D,B^\prime$ of $\GL$  are defined as follows:
\[ D\coloneqq \left\{ \begin{pmatrix} e^t & 0 \\ 0 & e^s \end{pmatrix}\colon s,t \in \R \right\}, B^\prime\coloneqq \left\{ \begin{pmatrix} e^t & 0 \\ s & e^t \end{pmatrix}\colon s,t \in \R \right\}.\]

\begin{thm}\label{prop:LinL=DB'}
Let $L$ and $H$ be connected closed subgroups of $\GR$. If $L \pf H$ in $G$, then $(L,H)$ is of the following form:
\[ (L(\Hsix,1),N)\]
where $L$ is $L(\Hfif,1)$ or $L(\Hsix,1)$.
\end{thm}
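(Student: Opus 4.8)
The plan is to translate properness into an explicit divergence condition and then to test that condition against a normal-form list of all connected subgroups with $\Lin(L)\in\{\Hsix,\Hfif\}$. Writing $m\colon L\times H\to G$ for the map $(l,h)\mapsto lh^{-1}$, the relation $L\pf H$ is equivalent to properness of $m$; concretely, $L\pf H$ fails exactly when there exist sequences $l_n\to\infty$ in $L$ and $h_n\to\infty$ in $H$ with $l_nh_n^{-1}$ bounded in $G$. Because $G=\GR$ is a semidirect product, boundedness of $l_nh_n^{-1}$ decouples into two conditions: boundedness of the linear part $\Lin(l_n)\Lin(h_n)^{-1}$ in $\GL$, and boundedness of the resulting translational part $\U(l_n)-\Lin(l_n)\Lin(h_n)^{-1}\U(h_n)$ in $\R^2$. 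I would analyse these two layers in turn.

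For the linear layer I would apply the reductive properness criterion, recorded here as Theorem~\ref{thm:properGL}, to $\Lin(L)$ and $\Lin(H)$ inside the reductive group $\GL$. The structural point is that both $\Hsix$ and $\Hfif$ contain the central one-parameter subgroup $\Hfir=\{e^tI\}$, so the Cartan projection of $\Lin(L)$ always meets the central (scalar) direction; by Theorem~\ref{thm:properGL} this makes $\Lin(L)$ linearly proper against only a very restricted family of targets, and in most columns of Table~\ref{tab:howtodetermineproperness} the linear parts are not proper at all. Wherever linear properness fails, properness of the whole pair can only be restored by the translational layer, so the burden of the proof shifts there.

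For the translational layer I would first parametrize each admissible $L$ by the $\Lin(L)$-invariant subspace $W=L\cap\R^2$ together with a continuous crossed homomorphism $\Lin(L)\to\R^2/W$ describing the section, and reduce this cocycle to a normal form modulo coboundaries (equivalently, modulo conjugation by $\R^2$, which is allowed under $\sim$). Substituting the resulting finite list of normal forms into the boundedness condition for $l_nh_n^{-1}$ and running it against each type of $H$ in Table~\ref{tab:howtodetermineproperness}, I would decide in every case whether the translational terms can absorb the linear divergence. The outcome is that the only connected $H$ admitting a proper partner with $\Lin(L)\in\{\Hsix,\Hfif\}$ is $N$, and the same computation forces $L$ into exactly the two normal forms $L(\Hsix,1)$ and $L(\Hfif,1)$; the quadratic translational component $(b,\tfrac12b^2)$ of $N$ is precisely what allows this compensation.

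The step I expect to be the main obstacle is this final matching at the borderline growth rate. Since $\Hfir\subset\Lin(L)$ puts the Cartan projection of $\Lin(L)$ on the relevant wall, the linear contribution to $l_nh_n^{-1}$ is exactly critical, so boundedness is decided not by the leading linear growth but by the competition between the sub-leading linear terms and the quadratic translational term of $N$. The delicate work is to carry out these estimates uniformly: to verify that no differently twisted or lower-dimensional subgroup with $\Lin(L)\in\{\Hsix,\Hfif\}$ produces the same cancellation, and that replacing $N$ by any other connected $H$ breaks properness. It is here that the explicit normal forms and the precise quadratic profile of $N$ are indispensable, and where the argument is genuinely non-reductive.
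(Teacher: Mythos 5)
Your reduction of $\pf$ to properness of the map $m(l,h)=lh^{-1}$ is incorrect, and this is a genuine gap rather than a cosmetic one. Failure of $L\pf H$ means that some unbounded sequence $l_n\in L$ can be written $l_n=s_nh_ns_n'$ with $h_n\in H$ and $s_n,s_n'$ in a fixed compact set; boundedness of $l_nh_n^{-1}$ is the special case $s_n'=e$ and is strictly stronger. Already in $\SL$, writing $A$ for the diagonal one-parameter subgroup and $N'$ for a unipotent one-parameter subgroup, one has $A\not\pf N'$ (Kostant's decomposition $G=KN'K$ puts all of $A$ inside $KN'K$), while $A\cap CN'$ is bounded for every compact $C$ because $(a,n)\mapsto an^{-1}$ is a homeomorphism onto the closed subgroup $AN'$; your one-sided test would declare this pair proper. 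The same defect is fatal in the present theorem: the paper's non-properness argument against $L(\Hsec{1},2)$ consists precisely of constructing two-sided conjugators whose translational entries ($E_n$, $v_n'$) are chosen to remain bounded, and the correct translational comparison is $\U(g_n')+\Lin(g_n')\U(s_n')\asymp_+\Lin(s_n)\U(g_n)$, not boundedness of $\U(l_n)-\Lin(l_nh_n^{-1})\U(h_n)$. A smaller inaccuracy in the same layer: since $\Hfif\sim\Hsix\sim\GL$ inside $\GL$, the linear part is proper against \emph{no} noncompact subgroup whatsoever, not merely "a very restricted family", so the entire burden always falls on the translational layer.

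Second, your plan omits the reductions that make the case analysis finite and that decide the hard cases, and its endgame conflicts with the statement. The paper does not classify cocycles: by Proposition~\ref{prop:simsubgrp}, every connected subgroup with linear part $\Hfif$ or $\Hsix$ other than $L(\Hfif,1)$ and $L(\Hsix,1)$ is $\sim$-equivalent to $\GL$ or to $\GR$; and since both remaining candidates contain $\GL$ modulo $\sim$, any proper partner $H$ must act properly on $G/\GL\cong\R^2$, hence lies in the five-element list $\R^2$, $M$, $N$, $L(\Hsec{1},2)$, $L(\Hsec{1},5)$ of \cite{Kob90} --- no sweep over all columns of the classification is needed. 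Most of the resulting pairs then fail (CI); the $N$-cases are settled not by a borderline sub-leading-term estimate but by Theorem~\ref{thm:slded}: intersecting with $\SR$, one candidate meets it in a conjugate of $S$ (hence $\pf N$ by Table~\ref{tab:properSR}) and the other in the subgroup $L$ (hence $\not\pf N$). This exposes the substantive error in your conclusion: you assert that the quadratic profile of $N$ produces the required compensation for \emph{both} normal forms, but exactly one of the two pairs with $N$ is proper, as the theorem's conclusion $(L(\Hsix,1),N)$ and the appendix tables record; your sketch contains no mechanism distinguishing the two, while the $\SR$-reduction separates them at once. Finally, the positive assertion that the surviving pair \emph{is} proper is a universally quantified statement over all sequences and compact sets; your proposal offers only the remark that the estimates are "delicate" and should be carried out "uniformly", which is precisely where the actual proof lives.
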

The subgroup $N$ is the one-dimensional connected subgroup of $\SR$ defined in Subsection~\ref{subsection:properSR}, and $L(\Hfif,1)$ and $L(\Hsix,1)$ are defined below.
\[ L(\Hfif,1)=\left\{ \begin{pmatrix} e^{t} & 0 & 0 \\ s & e^{t} & u \end{pmatrix}\colon s,t,u \in \R\right\}, L(\Hsix,1)=   \left\{ \begin{pmatrix} e^{t+s} & 0 & 0 \\ 0 & e^{t-s} & u \end{pmatrix}\colon s,t,u \in \R\right\}.\]
\begin{thm}\label{thm:slded}
    Suppose that $L \subset SL_n(\R) \ltimes\R^n$ is a  subset and $H \not\subset SL_n(\R)\ltimes \R^n$ is a connected subgroup of $GL_n(\R)\ltimes \R^n$. Then, we have the following equivalence:
\[ L \pitchfork H \ \text{in  }  GL_n(\R) \ltimes \R^n \iff L\pitchfork (H\cap SL_n(\R) \ltimes \R^n) \text{ in } SL_n(\R) \ltimes \R^n.\]
\end{thm}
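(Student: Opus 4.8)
The plan is to exploit the group homomorphism $\pi\coloneqq\det\circ\Lin\colon GL_n(\R)\ltimes\R^n\to\R^\times$, whose kernel is exactly $G_0\coloneqq SL_n(\R)\ltimes\R^n$. Since $H$ is connected it lies in the identity component, so $\pi(H)\subseteq\R_{>0}$; and since $H\not\subseteq G_0$ the image $\pi(H)$ is a nontrivial connected subgroup of $\R_{>0}$, hence all of $\R_{>0}$. Writing $H_0\coloneqq H\cap G_0=\ker(\pi|_H)$, I would first record that $\pi|_H$ admits a one-parameter section inside $H$: choosing $X$ in the Lie algebra of $H$ whose linear part has nonzero trace and setting $\sigma(t)\coloneqq\exp(tX)$ gives a one-parameter subgroup of $H$ with $\pi(\sigma(t))=e^{ct}$, $c\neq0$, which after rescaling satisfies $\pi(\sigma(t))=e^{t}$. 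The decisive structural fact is that $\sigma(\R)\subseteq H$ normalizes $H_0$, because $H_0$ is normal in $H$ as the kernel of $\pi|_H$; equivalently $\Ad(\sigma(t))H_0=H_0$ for all $t$.

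The forward implication is immediate. A compact subset $S_0\subseteq G_0$ is compact in $G$, and $H_0\subseteq H$, so $L\cap S_0H_0S_0\subseteq L\cap S_0HS_0$ is relatively compact in $G$ whenever $L\pitchfork H$ in $G$; since this set lies in the closed subgroup $G_0$, relative compactness in $G$ and in $G_0$ coincide, so $L\pitchfork H_0$ in $G_0$.

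The substance is the reverse implication. Fix a compact $S\subseteq G$ and take $\ell=s_1hs_2\in L\cap SHS$. As $\ell\in L\subseteq G_0$ we have $\pi(\ell)=1$, so $\pi(h)=\pi(s_1)^{-1}\pi(s_2)^{-1}$ is confined to a compact subset of $\R_{>0}$; thus only the slab $\{h\in H:|\log\pi(h)|\le C_S\}$ contributes, and each such $h$ factors as $h=h_0\,\sigma(u)$ with $h_0\in H_0$ and $u=\log\pi(h)$ bounded. Decomposing the sandwiching factors through the same section, $s_1=\alpha_1\sigma(v_1)$ and $s_2=\sigma(v_2)\alpha_2$ with $\alpha_1,\alpha_2\in G_0$ and $v_i=\log\pi(s_i)$, the constraint $\pi(\ell)=1$ forces $v_1+u+v_2=0$, so the three $\sigma$-factors telescope to $\sigma(v_1)h_0\sigma(-v_1)=\Ad(\sigma(v_1))h_0$, which still lies in $H_0$ by the normalization property. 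Hence $\ell\in\alpha_1H_0\alpha_2$ with $\alpha_1,\alpha_2$ ranging over compact subsets of $G_0$, i.e.\ $L\cap SHS\subseteq L\cap S_0H_0S_0$ for a compact $S_0\subseteq G_0$, which is relatively compact by the hypothesis $L\pitchfork H_0$ in $G_0$. The conceptual heart is exactly this collapse: choosing the section inside $H$ makes the family $\{\Ad(\sigma(v_1))H_0\}$ reduce to $H_0$, whereas an arbitrary section of $\pi$ would produce a genuinely larger ``fan'' of conjugate subgroups not commensurable with $H_0$, and the inclusion would break.

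The step I expect to be the main obstacle is the bookkeeping across the two connected components of $GL_n(\R)$. The section $\sigma$ reaches only $\pi(G^0)=\R_{>0}$, so the decomposition $s_i=\alpha_i\sigma(v_i)$ with $\alpha_i\in G_0$ is available only when $\pi(s_i)>0$; for factors in the $\det<0$ component one must extract a fixed reflection $\rho=(\mathrm{diag}(-1,1,\dots,1),0)$, after which the telescoping yields $\Ad(\rho)H_0$ in place of $H_0$. I would handle this by splitting $S$ along the sign of $\det\circ\Lin$, observing that cross terms force $\pi(h)<0$ and so drop out, and that the $\det<0$ contribution is governed by $L\pitchfork\Ad(\rho)H_0$ in $G_0$. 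Reducing this back to $L\pitchfork H_0$ in $G_0$ is the delicate point, since $\Ad(\rho)$ is an outer automorphism of $SL_n(\R)$ for even $n$; I would settle it by verifying that the properness of $(L,H_0)$ is detected by $\Ad(\rho)$-invariant data, and I would note that for the explicit subgroups occurring in the classification $H_0$ is already $\Ad(\rho)$-stable, so the reflection changes nothing and the stated equivalence holds on the nose.
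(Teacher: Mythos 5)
Your core argument is correct, and despite the different packaging it rests on the same mechanism as the paper's proof: a one-parameter subgroup $\sigma(t)=\exp(tX)\subset H$ with $\det\Lin(\exp X)\neq 1$, used as a section of the determinant character to shuttle determinant between the compact sandwiching factors and $H$. The difference is organizational. You argue directly: factor $h=h_0\sigma(u)$, push the sections out of $s_1,s_2$, and telescope, which requires the (true) observation that $H_0=\ker(\pi|_H)$ is normal in $H$ and hence stable under $\Ad(\sigma(v))$. The paper instead proves the contrapositive: given a witness $s_ig_i's_i'\in L$ with $g_i'\in H$ divergent, it rewrites the product as $\bigl(s_i\exp(-t_iX)\bigr)\bigl(\exp(t_iX)\,g_i'\,\exp(t_i'X)\bigr)\bigl(\exp(-t_i'X)s_i'\bigr)$, choosing $t_i,t_i'$ so that the outer factors have unimodular linear part; the middle factor then lies in $H\cap(SL_n(\R)\ltimes\R^n)$ automatically, because the total determinant is $1$ and $\exp(tX)\in H$. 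So the paper needs neither the normality of $H_0$ nor a factorization of $h$, which makes its version marginally leaner, while your version yields the explicit inclusion $L\cap SHS\subset L\cap S_0H_0S_0$ with $S_0\subset SL_n(\R)\ltimes\R^n$ compact, which is arguably more transparent.

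On the two-component issue: you are right that this is the delicate point, and you should know that the paper's proof silently elides it --- its compact set $C$ is defined via $\log\bigl(\max_{s\in S}\det(\Lin(s))\bigr)$, and the choice $\det\Lin(\exp t_iX)=\det\Lin(s_i)$ is possible only when $\det\Lin(s_i)>0$, so the written argument covers exactly your positive-sign case. Your bookkeeping is correct as far as it goes: mixed-sign terms vanish since $\pi(H)\subset\R_{>0}$, and the doubly negative terms are governed by $L\pf\Ad(\rho)H_0$ in $SL_n(\R)\ltimes\R^n$. But your final step is an assertion, not a proof: ``properness of $(L,H_0)$ is detected by $\Ad(\rho)$-invariant data'' is precisely what has to be shown, and properness in an index-two extension can genuinely be stronger than properness in the identity component (e.g.\ $L=\R\times\{0\}$ and $H=\{0\}\times\R$ in $\R^2$ extended by the coordinate swap), so some input is required. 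The clean way to finish is to show that $\Ad(\rho)H_0$ is conjugate to $H_0$ by an element of $SL_n(\R)\ltimes\R^n$, since conjugating $H_0$ by a fixed $g_1$ in the group where properness is measured changes nothing: $L\cap S(g_1H_0g_1^{-1})S^{-1}=L\cap(Sg_1)H_0(Sg_1)^{-1}$. For $n=2$ this works: every representative in the classification (Lemma~\ref{lem:clsr}) is stable under conjugation by $\rho=\mathrm{diag}(-1,1)$, and if $H_0=gKg^{-1}$ with $K$ such a representative, then $\Ad(\rho)H_0=(\rho g\rho^{-1}g^{-1})H_0(\rho g\rho^{-1}g^{-1})^{-1}$ with $\det\Lin(\rho g\rho^{-1}g^{-1})=1$. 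For general $n$, however, Theorem~\ref{thm:slded} is stated for an arbitrary connected $H$, so your retreat to ``the subgroups occurring in the classification'' abandons the stated generality; this residual gap is one you share with the paper rather than one you introduced.
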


\begin{thm}\label{thm:RCI}
    Let $L$ be a closed subgroup in $GL_n(\R)\ltimes \R^n$ whose linear part $\Lin(L)$ is closed, and $H$ be a subset in $GL_n(\R)\ltimes \R^n$. If the pair $(L,\R^n)$ is (CI) and $\Lin(L) \pf \Lin(H)$ in $GL_n(\R)$, then $L \pf H$ in $GL_n(\R)\ltimes \R^n$.
\end{thm}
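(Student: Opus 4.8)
The plan is to unwind the definition of the relation $\pf$ and reduce everything to the linear projection $\Lin$. Writing $G \coloneqq GL_n(\R)\ltimes \R^n$, recall that $L \pf H$ in $G$ means that for every compact subset $S \subseteq G$ the set $L \cap SHS$ is relatively compact. So I fix an arbitrary compact $S$ and aim to show that $L \cap SHS$ is relatively compact in $G$, splitting the control of its elements into a ``linear part'' and a ``kernel'' contribution along the exact sequence $1 \to \R^n \to G \xrightarrow{\Lin} GL_n(\R) \to 1$.

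First I would push everything down through the homomorphism $\Lin \colon G \to GL_n(\R)$. Since $\Lin$ is multiplicative, $\Lin(SHS)=\Lin(S)\Lin(H)\Lin(S)$, and therefore $\Lin(L \cap SHS) \subseteq \Lin(L) \cap \Lin(S)\Lin(H)\Lin(S)$. The set $\Lin(S)$ is compact, so applying the hypothesis $\Lin(L) \pf \Lin(H)$ in $GL_n(\R)$ to the compact set $\Lin(S)$ shows that $\Lin(L)\cap \Lin(S)\Lin(H)\Lin(S)$ is relatively compact in $GL_n(\R)$. Hence the linear parts of the elements of $L \cap SHS$ remain in a fixed compact set $C$; here I use that $\Lin(L)$ is closed, so that the closure of $\Lin(L\cap SHS)$, being compact in $GL_n(\R)$ and contained in $\Lin(L)$, is a compact subset $C$ of $\Lin(L)$.

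The remaining, and main, step is to upgrade boundedness of the linear parts to boundedness of the whole elements, i.e.\ to prove that the restricted projection $\Lin|_L \colon L \to \Lin(L)$ is a proper map. Its kernel is $L \cap \R^n$; unwinding the (CI) hypothesis for the pair $(L,\R^n)$, and using that $\R^n$ is normal in $G$ so that conjugation does not move it, shows that $L\cap \R^n$ is compact, hence (being a closed subgroup of the vector group $\R^n$) trivial. Thus $\Lin|_L$ has compact kernel and, by hypothesis, closed image, and I would invoke the general fact that a continuous homomorphism of Lie groups with compact kernel and closed image is proper: it factors as the quotient $L \to L/(L\cap\R^n)$, which is proper because the kernel is compact, followed by the continuous injective homomorphism $L/(L\cap\R^n) \to GL_n(\R)$, which is a closed embedding (hence proper) because its image $\Lin(L)$ is closed. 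Consequently $(\Lin|_L)^{-1}(C)$ is compact, and since $L\cap SHS \subseteq (\Lin|_L)^{-1}(C)$ by the previous paragraph, $L\cap SHS$ is relatively compact, as desired.

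The main obstacle is precisely this properness of $\Lin|_L$: one must translate the (CI) condition into compactness of the kernel $L\cap\R^n$ and then combine it with the closedness of $\Lin(L)$, invoking the open mapping theorem and the fact that a quotient by a compact subgroup is a proper map. Everything else — the behavior of $\Lin$ on products and the single application of the $\pf$-hypothesis to the compact set $\Lin(S)$ — is routine once the problem has been decomposed along the above exact sequence.
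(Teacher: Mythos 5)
Your proof is correct and is essentially the paper's own argument: the paper's Proposition~\ref{prop:RCIproper} likewise derives triviality of $L\cap\R^n$ from the (CI) hypothesis (via normality of $\R^n$) and applies the open mapping theorem together with closedness of $\Lin(L)$ to make $\Lin|_L$ a homeomorphism onto a closed subgroup, i.e.\ a proper map. The paper then bounds $L\cap SHS^{-1}$ inside $L\cap(C\times\R^n)$ for the compact set $C=\Lin(L)\cap\Lin(S)\Lin(H)\Lin(S)^{-1}$, which is precisely your step $L\cap SHS\subseteq(\Lin|_L)^{-1}(C)$.
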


The conditions {\bf(A), (B)} and {\bf(C)} in Table~\ref{tab:howtodetermineproperness} are described in the following theorem. This theorem is the main technical result of this paper.
\begin{thm}\label{thm:conditionABC} Let $L$ and $H$ be connected closed subgroups of $\GR$.
    \begin{description}
        \item[{\bf(A)}] Assume that $\Lin(L)=\Hsec\alpha$ and $\Lin(H)=\Hsec{\beta}$. Then, the (CI) condition is equivalent to properness of the pair.
        \item[{\bf(B)}] Assume that $\Lin(L)=\Hfou\alpha$ and $\Lin(H)=\Hfou\beta$. If the pair $(L,H)$ is proper, then at least one of $L$ and $H$ is $L(\Hfou{-3},3)$ or $L(\Hfou{-3},6)$. Furthermore, if $H$ is these subgroups, then the following equivalence holds:
        \[ L\pf H \iff (L,H) \text{ is (CI) and } |\alpha|<3.\]
    \item[{\bf(C)}] Assume that $\Lin(L)=\Hsec\alpha$ and $\Lin(H)=\Hfou\beta$. Then, the properness of the pairs $(L,H)$ are described in Table~\ref{tab:conditionC}. Here, each entry in the table that specifies a condition on the parameters $\alpha ,\beta$ provides a necessary and sufficient condition for the corresponding pair to be proper.
    \end{description}
\end{thm}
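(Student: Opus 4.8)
The plan is to reduce each of the three cases to a finite case analysis over the connected closed subgroups $L$ and $H$ carrying the prescribed linear parts, and then to decide properness using the reduction theorems already at hand. Two facts organize everything. First, properness always implies (CI), so in case {\bf(A)} and in the ``furthermore'' part of {\bf(B)} the substance lies in the converse. Second, the positive (properness) direction will, wherever possible, be obtained from Theorem~\ref{thm:RCI}: once $(L,\R^2)$ is (CI)---equivalently, since $\R^2$ is normal, once $L\cap\R^2=\{0\}$, so that $\Lin|_L$ is injective---and once Table~\ref{tab:properGL} certifies $\Lin(L)\pf\Lin(H)$ in $\GL$, properness of $(L,H)$ follows. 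The negative direction will be established by hand: I produce $l_n\to\infty$ in $L$ with $l_n\in SHS$ for a fixed compact $S\subset\GR$, the standard witness to the failure of properness, detected by a direct matrix-norm (Cartan projection) growth estimate on $\GR$.

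For {\bf(A)} and {\bf(C)} I first record the structure, up to $\sim$, of a connected $L$ with $\Lin(L)=A(\alpha)$: the translation part $\mathfrak{l}\cap\R^2$ is invariant under $\mathrm{diag}(\alpha+1,\alpha-1)$, hence equals $0$, one of the two eigenlines, or all of $\R^2$, and a non-removable affine extension exists only along an eigendirection of vanishing exponent, i.e.\ only at $\alpha=\pm1$; the same analysis applies to $H$. With these lists, $(L,\R^2)$ is (CI) exactly when $L$ misses $\R^2$, and Theorem~\ref{thm:RCI} together with the entries $|\alpha|\neq|\beta|$ (for $A$ vs.\ $A$) and $|\alpha|>|\beta|$ (for $A$ vs.\ $B$) of Table~\ref{tab:properGL} yields properness. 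When the linear-part condition fails or $L$ carries a nontrivial translation part, I check directly---by the escape-sequence method---that properness fails (matching the failure of (CI) in case {\bf(A)}), or I record the outcome (completing Table~\ref{tab:conditionC} in case {\bf(C)}). In {\bf(C)} one further input is needed: an $H$ with $\Lin(H)=B(\beta)$ may lie outside $\SR$, in which case the resonant value $\beta=-3$ must be treated, and there Theorem~\ref{thm:slded} reduces the question to $\SR$.

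Case {\bf(B)} is the crux and the main obstacle, because both linear parts are of type $B(\cdot)$, whose pair is never proper in $\GL$ (the blank $B(\alpha)$--$B(\beta)$ entry of Table~\ref{tab:properGL}); hence Theorem~\ref{thm:RCI} is unavailable and any properness must be supplied entirely by the translation directions. The decisive observation is that the exponents of $B(\alpha)$ are $\alpha+1$ and $\alpha-1$, and they stand in the resonance ratio $2:1$ precisely at $\alpha=-3$ (exponents $-4=2\cdot(-2)$); this is the $B$-family incarnation of the parabolic subgroup $N=\{(b,\tfrac12 b^2)\}$ of $\SR$, which is the source of all exotic properness in the $SL_2$ setting (compare Theorem~\ref{prop:LinL=DB'}). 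I will show that only this resonance admits a quadratic affine extension restoring properness, namely $L(B(-3),3)$ or $L(B(-3),6)$, and that if neither $L$ nor $H$ is one of these then every candidate pair admits an escaping sequence; this proves the necessity claim. For the ``furthermore'' equivalence, when $H$ is $L(B(-3),3)$ or $L(B(-3),6)$ one has $H\not\subset\SR$ (its linear determinant is $e^{2\beta t}=e^{-6t}\neq1$), and the properness of $L\pf H$ reduces to comparing the linear growth rate of $L$'s $B(\alpha)$-direction against the quadratic ($N$-governed) growth carried by the resonance; the threshold is exactly $|\alpha|<3=|{-3}|$, matching the $|\alpha|<|\beta|$ pattern of Table~\ref{tab:properGL}. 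In the sub-cases where one subgroup lies in $\SR$, Theorem~\ref{thm:slded} makes this reduction rigorous, passing to $H\cap\SR$ (a copy of $N$) and invoking Theorem~\ref{thm:properSR}; the criterion then packages as ``$(L,H)$ is (CI) and $|\alpha|<3$.''

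I expect the genuine difficulty to be concentrated in two places. First, classifying the connected subgroups with $B(\alpha)$ linear part requires solving the cocycle equation for the affine extension and isolating its resonant $\alpha=-3$ solutions, which is delicate bookkeeping. Second, and more seriously, the necessity direction in {\bf(B)}---that properness fails for every non-resonant configuration---cannot be delegated to Theorem~\ref{thm:RCI} and must be proved directly; the plan is to estimate the growth of $l h^{-1}$ for $l\in L$, $h\in H$ and to produce, uniformly across the two-parameter family $(\alpha,\beta)$, sequences along which the linear part diverges while the translational part fails to separate them. Organizing these divergent sequences uniformly, and verifying in each exceptional case that the quadratic resonance genuinely defeats them, is the part I expect to require the most care.
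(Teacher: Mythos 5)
There is a genuine gap, and it sits at the heart of all three cases: you assume that positive (properness) results are available only through Theorem~\ref{thm:RCI}, i.e.\ only when $\Lin(L)\pf\Lin(H)$ already holds in $\GL$, and you explicitly assert that ``when the linear-part condition fails \dots{} properness fails (matching the failure of (CI) in case {\bf(A)})''. This is false, and it contradicts the theorem you are proving. In case {\bf(A)} there are (CI) --- hence, by the statement itself, proper --- pairs whose linear parts coincide: for instance $(L(\Hsec{1},1),L(\Hsec{1},4))$ and $(L(\Hsec{1},2),L(\Hsec{1},3))$, where $\Lin(L)=\Lin(H)=\Hsec{1}$, so $\Lin(L)\not\pf\Lin(H)$ in $\GL$ and Theorem~\ref{thm:RCI} is unavailable. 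The paper proves properness of exactly these pairs by a mechanism your proposal does not contain: extract linear-part estimates from the uniform Cartan-projection bound (Fact~\ref{fact:ue}, via Lemma~\ref{lem:H2H2lem}), then compare translational parts asymptotically with the $\asymp_+$/$\asymp_\times$ calculus --- the unbounded drift $2t_n$ in the translation of $L(\Hsec{1},4)$ cannot be absorbed by bounded conjugators, forcing properness. Your escape-sequence method, applied to these pairs, would simply fail to produce an escape sequence, and your plan as written would record the wrong entry.

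The same phenomenon recurs throughout {\bf(C)} and is the entire content of the positive direction of {\bf(B)}. Table~\ref{tab:conditionC} contains proper pairs at the boundary where the linear parts are \emph{not} proper in $\GL$: $L(\Hsec{1},4)\pf L(\Hfou{\beta},1)$ for all $\beta$ (proved via $L(\Hsec{1},4)\pf\GL$ from \cite[Prop.\ A.2.1]{Kob90}), $L(\Hsec{1},1)\pf L(\Hfou{1},2)$, $L(\Hsec{1},2)\pf L(\Hfou{-1},4)$, and $L(\Hsec{\alpha},1)\pf L(\Hfou{-1},5)$ for $0<|\alpha|\le 1$ --- all carried by translational growth. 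In {\bf(B)} your reduction via Theorem~\ref{thm:slded} works only when $L\subset\SR$, i.e.\ only for $\alpha=0$ (since $\det\Lin = e^{2\alpha t}$); for $0<|\alpha|<3$ neither Theorem~\ref{thm:RCI} nor Theorem~\ref{thm:slded} applies, and ``comparing linear growth against quadratic growth'' is the right intuition but not a proof --- the paper needs the quantitative Proposition~\ref{prop:mt7prop} (estimates $t_n'\asymp_+\tfrac{\alpha}{\beta}t_n$, $|u_n'|\gtrsim_\times e^{(\alpha+1)t_n}$, $|b_n|\asymp_\times e^{(\alpha+1)t_n}/|u_n'|$) fed into the translation-part comparison of Claim~\ref{cl:mt7}. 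Your necessity analysis in {\bf(B)} (the resonance at $\alpha=-3$, non-(CI) overlap along the common unipotent direction $e_1$ for $j\ne 3,6$) is sound and matches the paper, but without a mechanism for properness in the absence of linear-part properness, the sufficiency halves of {\bf(A)}, {\bf(B)}, and {\bf(C)} do not go through.
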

Here, the connected subgroups $L(\Hsec{\alpha},i), L(\Hfou{\beta},j)$ are defined in Section~\ref{sec:clGR}.

This theorem is proved by using the classification of connected subgroups of $\GR$ due to \cite[Thm.\ 11]{CKZ24} and by checking, case by case, the relation $\pf$ (Definition~\ref{def:proper}). Since the ambient group $\GR$ is low-dimensional, this verification requires a relatively small amount of computation. In general, it is difficult to check the binary relation $\pf$ directly.
 \begin{table}[hbtp]
\caption{The case $\Lin(L)=\Hsec{\alpha}$ and $\Lin(H)=\Hfou\beta$}
\label{tab:conditionC}
\centering
\setlength{\tabcolsep}{5pt}
\begin{adjustbox}{max width=\linewidth} 
\begin{tabular}{@{}c*{7}{c}@{}}
\toprule
\diagbox{$L$}{$H$} 
& $L(B(\beta),1)$ 
& $L(B(1),2)$ 
& $L(B(-3),3)$ 
& $L(B(\beta),4)$ 
& $L(B(-1),5)$ 
& $L(B(-3),6)$ 
& $L(B(\beta),7)$ \\
\midrule
$L(A(\alpha),1)$ 
& $|\alpha|>|\beta|$ 
& $|\alpha|\geq1$ 
& $|\alpha|\ne 3$ 
& $|\alpha|>|\beta|$ 
& $\alpha\ne 0$ 
& $|\alpha|\ne 3$ 
& $|\alpha|>|\beta|$ \\
$L(A(1),2)$ 
& $\pf$ 
&  
& $\pf$ 
& $-1 \le \beta < 1$ 
&  
& $\pf$ 
& $1 > |\beta|$ \\
$L(A(\alpha),3)$ 
& $|\alpha|>|\beta|$ 
& $|\alpha|>1$ 
& $|\alpha|\ne 3$ 
&  
&  
&  
&  \\
$L(A(1),4)$ 
& $\pf$ 
&  
&  $\pf$
&  
&  
&  
&  \\
$L(A(\alpha),5)$ 
& $|\alpha|>|\beta|$ 
& $|\alpha|>1$ 
& $|\alpha|>3$ 
&  
&  
&  
&  \\
\bottomrule
\end{tabular}
\end{adjustbox}
\end{table}

\subsection{Organization of the paper}
In Section~2, we review basic properties of $\pf$ and $\sim$, which are introduced in \cite{Kob96}. The binary relation $\pf$ is a generalization of proper actions, and $\sim$ is the coarsest equivalence relation which preserves the relation $\pf$.  

In Section~3, we introduce the notations $\lesssim_+, \asymp_+, \lesssim_\times$, and $\asymp_\times$.
These notations are used to analyze asymptotic behavior and to verify the relations $\pf$ and $\sim$, mainly in the proofs of Theorems~\ref{thm:properSR} and~\ref{thm:conditionABC}.

In Sections 4 and 5, we prove Theorems~\ref{thm:properSR} and \ref{thm:properGL} respectively.

In Section~6, we review the classification of the connected subgroups in $\GR$, which is proved in \cite[Thm.\ 11]{CKZ24}.

In Section~7, we prove Theorems~\ref{prop:LinL=DB'} - \ref{thm:RCI}. 

In the last section, we prove Theorem~\ref{thm:conditionABC}.

In Appendix~A, we provide a complete list of pairs of connected  closed subgroups $(L,H)$ such that $L$ acts properly on  $G/H$, that is, $L\pf H$ in $G$, up to $\sim$.

\subsection*{Acknowledgements}
The author expresses sincere gratitude to Professor Toshiyuki Kobayashi for his deep insights, generous support, and constant encouragement. The author also would like to thank Takayuki Okuda, Temma Aoyama, and Tatsuro Hikawa for helpful advice and comments.

This work was supported by JST SPRING, Grant Number JPMJSP2108. 

\section{Preliminaries}
\subsection{Properness relation $\pf$ and equivalence relation $\sim$}
In this section, we review the equivalence relation $\sim$ introduced by Kobayashi~\cite{Kob96}.

\begin{dfn}\label{def:proper}(\cite{Kob96}) Let $L,H$ be subsets in a locally compact group $G$. 
\begin{enumerate}
    \item We write $L\pf H$ if the intersection $L\cap SHS^{-1}$ is relatively compact for any compact subset $S$ of $G$. Then, we say the pair $(L,H)$ is proper.
    \item We write $L \sim H$ if there exists a compact subset $S$ of $G$ such that $L \subset SH S^{-1}, H \subset SLS^{-1}$.
\end{enumerate}
\end{dfn}
We obtain the following proposition regarding $\pf$ and $\sim$. 
\begin{prop}(\cite{Kob96})\label{prop:simpf}
Let $H,L,L^\prime$ be subsets of a locally compact group $G$. 
\begin{enumerate}
\item If $L\sim L^\prime, L\pf H$, then we have $L^\prime \pf H$.
\item In particular, if the subsets $L,H$ are closed subgroups, then the following three conditions are equivalent:
\begin{itemize}
    \item $L\pf H$.
    \item The group action $L \curvearrowright G/H$ is proper.
    \item The group action $H\curvearrowright G/L $ is proper.
\end{itemize}
\end{enumerate}
\end{prop}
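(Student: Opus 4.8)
The plan is to prove the two parts separately: part (1) is a pure conjugation-and-sandwich estimate at the level of subsets, while part (2) is a dictionary between the group-theoretic relation $\pf$ and the topological notion of a proper action, supplemented by a symmetry $L\pf H\iff H\pf L$ for subgroups.

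For part (1) I would begin from a witness of $L\sim L'$, namely a compact set $S_0\subseteq G$ with $L\subseteq S_0 L' S_0^{-1}$ and $L'\subseteq S_0 L S_0^{-1}$. Fixing an arbitrary compact $S\subseteq G$, I want to bound $L'\cap SHS^{-1}$. Taking $x\in L'\cap SHS^{-1}$ and using $L'\subseteq S_0 L S_0^{-1}$ to write $x=s_0\ell s_0'^{-1}$ with $s_0,s_0'\in S_0$ and $\ell\in L$, and simultaneously $x=shs'^{-1}$ with $s,s'\in S$, $h\in H$, I obtain $\ell=s_0^{-1}s\,h\,s'^{-1}s_0'\in L\cap (S_0^{-1}S)\,H\,(S_0^{-1}S)^{-1}$. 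Setting $T\coloneqq S_0^{-1}S$, which is compact, the hypothesis $L\pf H$ makes $L\cap THT^{-1}$ relatively compact; conjugating back gives $L'\cap SHS^{-1}\subseteq S_0\,\overline{L\cap THT^{-1}}\,S_0^{-1}$, a compact set. Hence $L'\cap SHS^{-1}$ is relatively compact and $L'\pf H$. The only care needed is that products, inverses, and conjugates of compact sets are compact, which holds in any topological group; note that no subgroup structure is used, matching the "subsets" hypothesis of part (1).

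For part (2) the core step is the equivalence $L\pf H\iff(L\curvearrowright G/H\text{ is proper})$. Writing $\pi\colon G\to G/H$ for the quotient and using the standard characterization that, for a locally compact group acting on a locally compact Hausdorff space, the action is proper iff $\{\ell\in L:\ell C\cap C\neq\emptyset\}$ is relatively compact for every compact $C\subseteq G/H$, I would unwind the cosets: the relation $\ell\,\pi(s_1)=\pi(s_2)$ is equivalent to $\ell\in s_2 H s_1^{-1}$. This shows that for $C=\pi(S)$ one has exactly $\{\ell\in L:\ell C\cap C\neq\emptyset\}=L\cap SHS^{-1}$, so the two conditions correspond term by term. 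To pass freely between the two formulations I must know that every compact $C\subseteq G/H$ is the image $\pi(S)$ of some compact $S\subseteq G$; I would supply this from local compactness of $G$ and closedness of $H$, by covering $C$ with finitely many images of compact neighborhoods and intersecting the resulting compact set with the closed set $\pi^{-1}(C)$.

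Finally I would prove the symmetry $L\pf H\iff H\pf L$ for subgroups by the same bookkeeping as in part (1): from $x\in H\cap SLS^{-1}$ one extracts $\ell\in L\cap S^{-1}HS$, which is relatively compact by $L\pf H$, and then conjugates back to bound $H\cap SLS^{-1}$ (here $L^{-1}=L$ and $H^{-1}=H$ are used). Applying the action-characterization once more with the roles of $L$ and $H$ exchanged yields $H\pf L\iff(H\curvearrowright G/L\text{ is proper})$, closing the chain of three equivalences. I expect the main obstacle to be precisely the compact-lifting lemma for $\pi$: it is the one place where local compactness is genuinely used and where one must argue topologically rather than merely manipulate compact sets formally; everything else reduces to the conjugation estimates already carried out in part (1).
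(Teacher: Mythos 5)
Your proof is correct; note, though, that the paper itself offers no proof of this proposition at all --- it is quoted verbatim from Kobayashi's work with the citation \cite{Kob96} --- so the comparison here is with the standard argument in the literature, which yours matches. All three ingredients check out: the sandwich estimate in part (1) correctly reduces $L'\cap SHS^{-1}$ to $S_0\,\overline{L\cap THT^{-1}}\,S_0^{-1}$ with $T=S_0^{-1}S$, using only that products and inverses of compact sets are compact, consistent with the ``subsets'' hypothesis; the dictionary $\{\ell\in L:\ell C\cap C\neq\emptyset\}=L\cap SHS^{-1}$ for $C=\pi(S)$ is an exact equality, not just an inclusion; and your compact-lifting lemma is the genuinely topological step, where closedness of $H$ is used twice (to make $G/H$ Hausdorff, hence $C$ closed and $\pi^{-1}(C)$ closed, and to make $G/H$ locally compact so the transporter characterization of properness applies). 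One small correction: your parenthetical that the symmetry $L\pf H\iff H\pf L$ uses $L^{-1}=L$ and $H^{-1}=H$ is unnecessary. From $x=s\ell t^{-1}\in H\cap SLS^{-1}$ one gets $\ell=s^{-1}xt\in L\cap S^{-1}HS=L\cap THT^{-1}$ with $T=S^{-1}$, with no inversion of elements of $L$ or $H$ anywhere; so $\pf$ is symmetric for arbitrary subsets, exactly as in Kobayashi's formulation, and the subgroup hypothesis in part (2) is needed only for the passage to the homogeneous spaces $G/H$ and $G/L$, not for the symmetry itself.
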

By Proposition~\ref{prop:simpf}~(1), classifying proper group actions is equivalent to classifying proper pairs.
Identifying subgroups up to this equivalence relation $\sim$ helps simplify the study of properness by Proposition~\ref{prop:simpf}~(2).

Moreover, this relation is known as the coarsest one that preserves properness. This is formulated by Kobayashi~\cite{Kob96} as follows:
\begin{fact} \emph{(Discontinuity duality theorem)}
    Let $L,L^\prime$ be subsets in a Lie group $G$. If the equation 
    \[ \pf(L,G) = \pf (L^\prime,G) \]
    holds, then we obtain $L\sim L^\prime$.
Here, we define the set $\pf(L,G)$ of subgroups of $G$ as follows:
\[ \pf (L,G) = \{ H \subset G \colon L \pf H \}.\]
\end{fact}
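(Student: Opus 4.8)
The plan is to prove the contrapositive: assuming $L \not\sim L'$, I will exhibit a subset $H$ lying in exactly one of $\pf(L,G)$ and $\pf(L',G)$, which forces $\pf(L,G)\neq\pf(L',G)$. The starting point is a one-sided refinement of Proposition~\ref{prop:simpf}(1). Writing $L \precsim L'$ when $L \subset SL'S^{-1}$ for some compact $S \subset G$, the same computation as in Proposition~\ref{prop:simpf} gives the monotonicity $L \precsim L' \Rightarrow \pf(L',G)\subset \pf(L,G)$: indeed, for compact $T$ one has $SL'S^{-1}\cap THT^{-1}=S\bigl(L'\cap(S^{-1}T)H(S^{-1}T)^{-1}\bigr)S^{-1}$, which is relatively compact whenever $L'\pf H$. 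Since $L\sim L'$ is exactly the conjunction of $L\precsim L'$ and $L'\precsim L$, the failure of $\sim$ means one of these fails; after swapping $L$ and $L'$ I may assume there is no compact $S$ with $L\subset SL'S^{-1}$.

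To build the separating test set I fix an exhaustion $S_1\subset S_2\subset\cdots$ of $G$ by compact sets with $e\in S_1$, $S_k\subset \operatorname{int}S_{k+1}$, and $\bigcup_k S_k=G$ (available since the groups under consideration are $\sigma$-compact). Set $T_k\coloneqq S_kL'S_k^{-1}$; these tubes increase with $k$. Because $L\not\precsim L'$, we have $L\not\subset T_k$ for every $k$, so I may choose $\ell_k\in L\setminus T_k$ and put $H\coloneqq\{\ell_k:k\in\N\}$. The essential feature, immediate from the nesting, is that $\ell_k\notin T_m$ for all $k\geq m$; hence each fixed tube $T_m$ contains only finitely many of the $\ell_k$.

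It remains to verify the two properness assertions. For $L'\pf H$: given a compact $S$, enlarge it so that $S\cup S^{-1}\subset S_m$ for some $m$; then $S^{-1}L'S\subset T_m$, so $\ell_k\in S^{-1}L'S$ for only finitely many $k$, and therefore $L'\cap SHS^{-1}=\bigcup_k\bigl(L'\cap S\ell_kS^{-1}\bigr)$ is a finite union of compact sets, hence relatively compact. For $L\not\pf H$: taking $S=\{e\}$ gives $L\cap H=H$ (as $H\subset L$), and $H$ is not relatively compact, since if it lay in a compact set it would lie in some $S_m$, hence in $S_mS_m^{-1}\subset T_m$ (using $e\in L'$), contradicting $\ell_m\notin T_m$. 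Thus $H\in\pf(L',G)\setminus\pf(L,G)$, so the $\pf$-sets differ, which is the contrapositive we wanted.

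The main obstacle is producing a single sequence that escapes every conjugation tube simultaneously: the hypothesis only provides, for each individual compact $S$, a point of $L$ outside $SL'S^{-1}$, whereas the argument needs one sequence that eventually leaves every tube. This is precisely what the nested exhaustion supplies, through the one-line observation $\ell_k\notin T_m$ for $k\geq m$. A secondary delicate point is the non-relative-compactness of $H$, where I invoke $e\in L'$; this is automatic when $L'$ is a subgroup, which is the case of interest. For genuinely non-subgroup subsets one first disposes of the degenerate case in which $L$ or $L'$ is relatively compact---there the self-test $M\in\pf(M,G)\iff M$ is relatively compact, combined with $\pf(L,G)=\pf(L',G)$, forces both $L$ and $L'$ to be relatively compact and hence $L\sim L'$---and then runs the construction above only in the remaining case.
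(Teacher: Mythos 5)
The paper itself contains no proof of this statement: it is quoted as a Fact, with the reductive case attributed to Kobayashi~\cite{Kob96} and the general Lie group case to Yoshino~\cite{Yos07a}. Your proposal is a correct, self-contained proof, and it follows the same strategy as the known general-case argument rather than the Cartan-projection route available when $G$ is reductive: pass to the contrapositive, isolate a one-sided failure $L \not\precsim L'$, use a compact exhaustion $S_1 \subset S_2 \subset \cdots$ of the $\sigma$-compact group to form the nested tubes $T_k = S_k L' S_k^{-1}$, extract an escaping sequence $\ell_k \in L \setminus T_k$, and verify that the discrete test set $H = \{\ell_k : k \in \N\}$ satisfies $L' \pf H$ but $L \not\pf H$. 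All the key verifications are sound; the crucial point that each fixed tube $T_m$ contains only finitely many $\ell_k$ is exactly what makes $L' \pf H$ work, and this is the mechanism in the literature as well. The advantage of your approach over the reductive-case proof is that it needs nothing beyond local compactness and $\sigma$-compactness.

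Three small corrections. First, your displayed identity $SL'S^{-1}\cap THT^{-1}=S\bigl(L'\cap(S^{-1}T)H(S^{-1}T)^{-1}\bigr)S^{-1}$ is in general only an inclusion $\subset$ (the right-hand side can be strictly larger), but that inclusion is all the monotonicity $L\precsim L' \Rightarrow \pf(L',G)\subset\pf(L,G)$ requires. Second, your reduction for non-subgroup subsets does not actually eliminate the use of $e\in L'$: after discarding the relatively compact case you still need $H$ to be non-relatively-compact, and your argument for that invoked $e\in L'$. The fix is short: if $H\subset S_m$, pick $x\in L'$ (nonempty, since $L'$ is not relatively compact) and $k\ge m$ with $S_m x^{-1}\subset S_k$; then $\ell_k\in S_m\subset S_k L' S_k^{-1}=T_k$, contradicting $\ell_k\notin T_k$. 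Third, nonemptiness must be assumed throughout: for $L=\{e\}$ and $L'=\emptyset$ one has $\pf(L,G)=\pf(L',G)$ but $L\not\sim L'$, so the Fact as literally stated excludes this degeneracy, and your step ``both relatively compact, hence $L\sim L'$'' also uses it. Finally, note that the paper's phrase ``set of subgroups'' conflicts with its displayed definition $\{H\subset G \colon L\pf H\}$; since your test set $H$ is not a subgroup, your proof establishes the theorem under the subset reading, which is the displayed one and the one used in the cited sources.
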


The duality theorem shows that the equivalence class of $L$ with respect to $\sim$ is completely determined by properness. It was proved in the case where $G$ is a reductive group by Kobayashi~\cite{Kob96}, and in the case where $G$ is a general Lie group by Yoshino\cite{Yos07a}. 

\subsection{The (CI) condition and properness}
In this subsection, we review the (CI) condition.
\begin{dfn}\label{def:CI}(\cite{Kob90})
    A pair $(L,H)$ of close subgroups of a Lie group $G$ is (CI) if the subgroup $L\cap gHg^{-1}$ is compact for all $g\in G$.
\end{dfn}
It immediately follows that a pair $(L,H)$ is (CI) if it is proper. The opposite implication is not true in general.

\section{The notations $\lesssim_+, \asymp_+,\lesssim_\times, \asymp_\times $}

In order to study asymptotic behavior, we define some notation and prove basic properties in this section.

\begin{dfn}
Let $a=(a_n)_{n\in \N}, b= (b_n)_{n\in \N}$ be sequences of real numbers.
\begin{enumerate}
\item We write $a\lesssim_+ b$ if there exists a positive number $C>0$ such that $b_n -a_n < C$ for sufficiently large $n$.
\item We write $a\asymp_+ b$ if $a \lesssim_+ b$ and $b\lesssim_+a$.
\item Assume $b$ is nonzero for sufficiently large $n$. We write $a \lesssim_\times b $ if there exists a constant $C>0$ such that $\frac{a_n}{b_n} < C$ for sufficiently large $n$.
\item We write $a\asymp_\times b $ if $a\lesssim_\times b$ and $b\lesssim_\times a$.
\item We write $a \gnsim_\times b$ if for any $R>0$ there exists $N$ such that $\frac{a_n}{b_n} > R$ for $n>N$.
\end{enumerate}
\end{dfn}

First, we prove some basic properties concerning the notation $\lesssim_\times$.

\begin{prop}
    Let $a=(a_n)_n,b=(b_n)_n,c=(c_n)_n,d=(d_n)_n$ be sequences of real numbers.
\begin{enumerate}
    \item If $a\lesssim_+ b$ and $b\lesssim_\times c$, then $a\lesssim_\times c$.
    \item If $a\lesssim_\times b$, then $b\asymp_\times a+b$.
    \item If $|a | \lnsim_\times |b|$, then $b\asymp_\times b-a$.
    \item If $a\lesssim_\times b$ and $c\lesssim_\times d$, then $a\cdot b\lesssim_\times c\cdot d$.
    \item If $a\lesssim_\times b$ and $c\lesssim_\times d$, then $a+ b\lesssim_\times c+ d$.
\end{enumerate}
\end{prop}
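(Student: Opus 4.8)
The plan is to expand each relation into an explicit estimate on the terms and then manipulate these estimates directly; all five parts reduce to one-line inequalities once the definitions are unwound. Throughout I use that the sequences entering a multiplicative comparison are eventually positive (they record norms and exponential factors), so that $a \lesssim_\times b$ is the statement $0 < a_n \le C b_n$ for large $n$; I invoke this positivity explicitly, since it is exactly what allows one-sided ratio bounds to be combined.

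For (1) I would convert the additive comparison into a multiplicative one. The hypothesis $a \lesssim_+ b$ bounds $a_n$ above by $b_n + C_0$ for some constant $C_0$, while $b \lesssim_\times c$ gives $b_n \le C_1 c_n$ with $c_n > 0$; combining these yields $a_n \le C_1 c_n + C_0$, hence
\[ \frac{a_n}{c_n} \le C_1 + \frac{C_0}{c_n}, \]
which is bounded because $c_n$ is eventually bounded away from $0$, so $a \lesssim_\times c$. For (2), both bounds defining $b \asymp_\times a+b$ are immediate: $(a_n+b_n)/b_n = 1 + a_n/b_n \le 1 + C_1$ gives $a+b \lesssim_\times b$, and $b_n/(a_n+b_n) \le 1$ by positivity gives $b \lesssim_\times a+b$. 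For (3), the hypothesis $|a| \lnsim_\times |b|$ means $a_n/b_n \to 0$, so
\[ \frac{b_n-a_n}{b_n} = 1 - \frac{a_n}{b_n} \longrightarrow 1, \qquad \frac{b_n}{b_n-a_n} = \frac{1}{1 - a_n/b_n} \longrightarrow 1; \]
in particular $b_n - a_n$ is eventually nonzero and both ratios are eventually bounded, which is exactly $b \asymp_\times b-a$. Here no positivity is needed, only that $a$ is of strictly smaller order than $b$.

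For (4) and (5) I would obtain the stated product and sum comparisons by combining the two given multiplicative comparisons factor by factor, pairing $a$ with $c$ and $b$ with $d$. Writing the hypotheses as $a_n \le C_1 c_n$ and $b_n \le C_2 d_n$ for large $n$, the product factors as
\[ \frac{a_n b_n}{c_n d_n} = \frac{a_n}{c_n}\cdot\frac{b_n}{d_n} \le C_1 C_2, \]
giving $a\cdot b \lesssim_\times c\cdot d$; positivity is what guarantees that two ratios bounded above multiply to a ratio bounded above. For the sum, adding the two estimates gives $a_n + b_n \le C_1 c_n + C_2 d_n \le \max(C_1,C_2)\,(c_n+d_n)$, hence $(a_n+b_n)/(c_n+d_n) \le \max(C_1,C_2)$, that is $a+b \lesssim_\times c+d$.

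The only genuinely delicate point, and the main obstacle, is the one-sided nature of $\lesssim_\times$: for arbitrary signed sequences two ratios bounded above can multiply to an unbounded one (obstructing (4)), and summands can cancel in a denominator (obstructing (2) and (5)). Eventual positivity of the relevant sequences rules out precisely these pathologies, so I would record it once at the outset rather than re-deriving it in each part; with positivity in force every step above is elementary. The analogous caveat for (1) is that the reference sequence $c$ must stay bounded away from $0$, which holds for the growth-rate sequences to which the lemma is applied.
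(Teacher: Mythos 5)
Your proof is correct and takes essentially the same route as the paper, which dismisses parts (1), (2), (4), (5) as obvious and proves only (3) by exactly your mechanism of eventual smallness of the ratio: the paper uses $|a_n| \le \tfrac12 |b_n|$ for large $n$ to get $\bigl|\tfrac{b_n}{b_n-a_n}\bigr| < 2$, where you instead let $\tfrac{b_n}{b_n-a_n} \to 1$. Your explicit positivity conventions and your pairing in (4)--(5) (hypotheses read as $a \lesssim_\times c$ and $b \lesssim_\times d$) supply precisely the reading under which the printed statement is true and under which the paper actually applies it to norms and exponentials: with the one-sided, signed definition of $\lesssim_\times$, parts (2), (4), (5) fail for arbitrary sequences and part (1) indeed requires $c$ bounded away from zero, caveats the paper leaves implicit.
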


\begin{proof}
It is obvious, except for the statement (3).

Since $a \lnsim_\times b$, we have $a_n < \frac{1}{2}b_n$ for sufficiently large $n$. Thus, we have 
\[ \frac{b_n}{b_n - a_n} < \frac{b_n}{b_n - \frac{1}{2}b_n} = 2, \]
which implies $b \lesssim_\times b-a$.
\end{proof} 

\begin{rmk}
The notation $\lesssim_\times$ does not allow for the operation of `rearranging terms', that is, the evaluation $a\lesssim_\times b $ does not imply $a-c\lesssim_\times b-c$ for sequences $a, b, c$. For example, the sequences which are defined by $a_n = 2n , b_n = c_n =n$ satisfies $a\lesssim_\times b$, but does not satisfy $a-c \lesssim_\times b-c$.
\end{rmk}

Next, we state some properties concerning $\asymp_+$, and relationships between $\lesssim_+$ and $\lesssim_\times$. The proof is straightforward, and thus omitted.

\begin{prop}\label{prop:seqasymp}
\begin{enumerate}
\item If $a\lesssim_+ b, b\lesssim_+ c$, then $a\lesssim_+ c$.
\item If $a\lesssim_+b$, then $a+c\lesssim_+ b+c$.
\item If $a\lesssim_+ b, c\lesssim_+ d$, then $a+c\lesssim_+ b+d $.
\item If $a,b$ diverge to infinity as $n\to \infty$, then the condition $a\asymp_+b$ implies $a\asymp_\times b$.
\item $a\lesssim_+ b \iff \exp a \lesssim_\times \exp b $.
\item For all $\alpha >0 $, $a\lesssim_\times b \iff a^{\alpha} \lesssim_\times b^{\alpha}.$
\end{enumerate}

\end{prop}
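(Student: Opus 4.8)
The plan is to prove the six items by unwinding the two definitions and, in the genuinely analytic item, passing to a limit. Items (1)--(3) are pure inequality bookkeeping: each instance of $\lesssim_+$ furnishes a threshold $N$ and a constant $C$ controlling the additive gap of the two sequences for $n>N$, and the conclusions follow by intersecting the finitely many ``eventually'' ranges (taking the maximum of the thresholds) and adding the witnessing constants. Concretely, for transitivity (1) I would use the telescoping identity $a_n-c_n=(a_n-b_n)+(b_n-c_n)$ to bound the outer gap by $C_1+C_2$; for (2) I would observe that adding a common sequence $c$ leaves the gap $a_n-b_n$ unchanged, since $(a_n+c_n)-(b_n+c_n)=a_n-b_n$; and (3) follows either by the same one-line computation applied to $(a_n+c_n)-(b_n+d_n)$ or formally by chaining (2) and (1).

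For (4) I would first record that $a\asymp_+ b$ makes both additive gaps eventually bounded, hence $|a_n-b_n|\le C$ for all large $n$. The only real step is to convert this additive control into multiplicative control using the hypothesis $a_n,b_n\to\infty$: write $\frac{a_n}{b_n}=1+\frac{a_n-b_n}{b_n}$, and since the numerator is bounded while $b_n\to\infty$ the error term tends to $0$, so $\frac{a_n}{b_n}\to 1$ and in particular is eventually bounded, giving $a\lesssim_\times b$; the symmetric computation gives $b\lesssim_\times a$, hence $a\asymp_\times b$.

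Item (5) is the bridge between the additive and multiplicative worlds. The point is the identity $\frac{(\exp a)_n}{(\exp b)_n}=e^{a_n-b_n}$: because $t\mapsto e^t$ and $t\mapsto\log t$ are monotone inverses, an eventual additive bound on $a_n-b_n$ is equivalent to an eventual multiplicative bound on the ratio $(\exp a)_n/(\exp b)_n$, which is precisely the asserted equivalence $a\lesssim_+ b\iff\exp a\lesssim_\times\exp b$ (a constant $C$ on one side corresponding to $e^{C}$ on the other); I would prove both implications simultaneously from this identity. Item (6) is the analogous statement for the power map: using that $t\mapsto t^{\alpha}$ is a strictly increasing bijection of $[0,\infty)$ for $\alpha>0$, the inequality $\frac{a_n}{b_n}<C$ is equivalent to $\bigl(\frac{a_n}{b_n}\bigr)^{\alpha}=\frac{a_n^{\alpha}}{b_n^{\alpha}}<C^{\alpha}$, so $a\lesssim_\times b\iff a^{\alpha}\lesssim_\times b^{\alpha}$; here I would note that we use nonnegativity of the entries, which is available in every application (these tools are applied to exponentials and to norms).

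There is no serious obstacle: every item reduces to summing constants or to a monotone change of variables, which is why the author records the proof as straightforward. The one place that is not purely formal is (4), where divergence of $b_n$ is essential---without it a bounded additive gap need not yield a bounded ratio (e.g.\ $a_n-b_n$ bounded with $b_n\to 0$)---so I would invoke $a,b\to\infty$ explicitly there; and in (5)--(6) I would keep track of the monotonicity and positivity hypotheses that make the change of variables reversible.
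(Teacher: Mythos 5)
Your proof is correct and is exactly the routine argument the paper has in mind: the proposition is recorded there with the remark that the proof is straightforward and omitted, and your constant-summing with a common threshold for (1)--(3), the identity $\frac{a_n}{b_n}=1+\frac{a_n-b_n}{b_n}$ together with $b_n\to\infty$ for (4), and the monotone changes of variables $t\mapsto e^{t}$ and $t\mapsto t^{\alpha}$ for (5)--(6) constitute precisely that argument. Two points in your favor: you read $a\lesssim_+ b$ as bounding $a_n-b_n$ above, which is the intended meaning even though the paper's definition literally writes $b_n-a_n<C$ (under the literal reading, (5) would come out with the two sides of $\lesssim_\times$ interchanged), and you rightly flag both the essential role of divergence in (4) and the eventual nonnegativity needed to make sense of $a^{\alpha}$ in (6), hypotheses the paper leaves implicit but which are satisfied in all of its applications.
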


We end this section with a proposition that will appear repeatedly in the sequel.
\begin{prop}\label{prop:normcpt}
    Let $G=GL_N(\R)$ and $S$ be a compact subset of $G$. Let $v = (v_n)_n \subset \R^N , s=(s_n)_n  \subset S$ be sequences. Then we have 
\begin{itemize}
    \item $||sv|| \asymp_\times||v||,$
    \item $||^tvs|| \asymp ||^tv||.$
\end{itemize}
\end{prop}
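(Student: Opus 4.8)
The plan is to reduce both statements to the submultiplicativity of the operator norm, combined with the observation that compactness of $S$ \emph{inside} $GL_N(\R)$ (as opposed to inside $M_N(\R)$) forces uniform two-sided control on the operator norms of $s$ and its inverse $s^{-1}$. Throughout I write $\|s\|_{\mathrm{op}}$ for the operator norm of a matrix $s$ with respect to the chosen norm $\|\cdot\|$ on $\R^N$, so that $\|sv\|\le \|s\|_{\mathrm{op}}\|v\|$.

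First I would record the uniform bound. Since $S$ is compact and $s\mapsto\|s\|_{\mathrm{op}}$ is continuous, $\sup_{s\in S}\|s\|_{\mathrm{op}}<\infty$. Crucially, because $S\subset GL_N(\R)$ and matrix inversion is continuous on $GL_N(\R)$, the image $S^{-1}=\{s^{-1}\colon s\in S\}$ is again compact, so $\sup_{s\in S}\|s^{-1}\|_{\mathrm{op}}<\infty$ as well. Let $C>0$ dominate both suprema. For the first bullet, submultiplicativity gives $\|s_nv_n\|\le\|s_n\|_{\mathrm{op}}\|v_n\|\le C\|v_n\|$, so $\|sv\|\lesssim_\times\|v\|$. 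For the reverse inequality I would write $v_n=s_n^{-1}(s_nv_n)$ and apply the same estimate to $s_n^{-1}$, obtaining $\|v_n\|\le\|s_n^{-1}\|_{\mathrm{op}}\|s_nv_n\|\le C\|s_nv_n\|$, hence $\|v\|\lesssim_\times\|sv\|$. The two bounds together yield $\|sv\|\asymp_\times\|v\|$.

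For the second bullet I would reduce to the first. Since transposition preserves the norm of a vector and converts a row-times-matrix product into a matrix-times-column product, namely ${}^t({}^t v_n\,s_n)={}^t s_n\,v_n$, we get $\|{}^t v_n s_n\|=\|{}^t s_n\,v_n\|$. The set ${}^t S=\{{}^t s\colon s\in S\}$ is compact (transposition is continuous) and contained in $GL_N(\R)$, so the first bullet applied to the compact set ${}^t S$ gives $\|{}^t s_n v_n\|\asymp_\times\|v_n\|=\|{}^t v_n\|$, which is exactly the claim.

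There is no serious obstacle here; the only point that is not completely automatic — and the sole reason invertibility is used — is the lower bound $\|v\|\lesssim_\times\|sv\|$, which fails for a general compact $S\subset M_N(\R)$, since a singular $s$ could collapse $v_n$ to a much smaller vector. The compactness of $S^{-1}$, equivalently the uniform positivity $\inf_{s\in S}|\det s|>0$, is precisely what supplies this uniform lower bound.
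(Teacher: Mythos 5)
Your proof is correct, but it takes a genuinely different route from the paper. The paper first reduces to the case $v_n = {}^t(t_n,0,\dots,0)$ by choosing $k_n \in O(N)$ with $k_n v_n$ along the first axis and absorbing $k_n^{-1}$ into the enlarged compact set $SK$; the whole statement then becomes the claim that the first column $(s_{*1})_n$ of $s_n$ satisfies $\|(s_{*1})_n\| \asymp_\times 1$, whose lower bound is extracted by a cofactor expansion of $\det s_n$ together with the entrywise bound $M$ and the fact $|\det s_n| \asymp_\times 1$. You instead invoke continuity of matrix inversion on $GL_N(\R)$, so that $S^{-1}$ is compact and $\sup_{s \in S}\|s^{-1}\|_{\mathrm{op}} < \infty$, and get the crucial lower bound directly from $\|v_n\| = \|s_n^{-1}(s_n v_n)\| \le C\|s_n v_n\|$. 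The two arguments hinge on the same underlying fact --- compactness \emph{inside} $GL_N(\R)$ keeps $|\det s|$ bounded away from zero, which your closing remark correctly identifies --- but you package it through $S^{-1}$ rather than through cofactors. Your version is shorter, works verbatim for any norm on $\R^N$, and avoids the orthogonal-reduction step entirely; the paper's version is a more explicit, entrywise computation in the spirit of the $\lesssim_\times$ toolkit it develops in that section. Your explicit reduction of the row-vector statement via ${}^t({}^t v_n\, s_n) = {}^t s_n\, v_n$ and compactness of ${}^t S$ also fills in what the paper dismisses with ``follows similarly,'' and in passing it confirms that the bare $\asymp$ in the paper's second bullet should be read as $\asymp_\times$.
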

\begin{proof}
   We prove the first statement. The second statement follows similarly.

   For all $v_n \in \R$, there exists $k_n \in K\coloneqq O(N)$ such that $k_nv_n = \begin{pmatrix} ||v_n|| \\ 0 \\ \vdots \\ 0\end{pmatrix}$. Since the sequence $s_n k_n^{-1}$ is contained in the compact set $SK$, we may assume that $v_n = \begin{pmatrix} t_n \\ 0 \\ \vdots \\ 0 \end{pmatrix}$ for some $t_n \in \R^\mathbb{N}$. Here, we prove the following estimate.
   \begin{claim*}
       The first column $(s_{*1})_n \coloneqq \pmat{(s_{11})_n \\ \vdots \\ (s_{N1})_n}$ of the sequence $s_n$ satisfies $||(s_{*1})_n|| \asymp_\times 1.$
   \end{claim*}
    \begin{proof}
        Since $s$ is contained in the compact set $S$, the absolute value of each entry is bounded above by some positive number $M>0$, and thus the norm $||(s_{*1})_n||$ is bounded above. Moreover, since $|\det s_n|  \asymp_\times  1$, we have
    \begin{align*}
        ||(s_{*1})_n|| &\asymp_\times |(s_{11})_n| + \cdots + |(s_{N1})_n| \\
        &\geq \frac{|(s_{11})_n||(\tilde{s}_{11})_n|+ \cdots+ |(s_{N1})_n||(\tilde{s}_{N1})_n|}{M^{N-1}}\\
        &\geq \frac{|\det s_n|}{M^{N-1}} \asymp_\times 1,
    \end{align*}
    where $(\tilde{s}_{ij})_n$ denotes the cofactor of $(s_{ij})_n$. This is the desired lower estimate.
    \end{proof}
    By using the above claim, we have 
    \[ ||s_nv_n|| = |t_n| \cdot||(s_{*1})_n|| \asymp_\times |t_n| =||v_n||.\]
\end{proof}



\section{Classification of proper pairs in $G=\SR$}
The aim of this section is to prove Theorem~\ref{thm:properSR}. We begin by fixing a convention for writing elements of $\GR$ throughout this paper; we use the same convention for elements of $\SR$.
 
\begin{conv}
    Let $g=\begin{pmatrix} a & b \\ c & d\end{pmatrix} \in \GL$ and $v=\begin{pmatrix}e \\ f \end{pmatrix} \in \R^2$. As explained in the introduction, an element of the affine transformation group $\GR$ is written as
    \[
    \begin{pmatrix} g&v\end{pmatrix}
    = \begin{pmatrix} \begin{pmatrix} a & b \\ c & d\end{pmatrix}& \begin{pmatrix}e \\ f \end{pmatrix}\end{pmatrix}.
    \]
    In this paper, we write it instead as
    \[
    \begin{pmatrix} a & b & e\\ c & d & f \end{pmatrix}.
    \]
\end{conv}

\subsection{Classification of the Lie subalgebras of $\sr$}
In this subsection, we review the classification of Lie subalgebras of $\sr$. The classification is not stated explicitly in \cite{CKZ24}, but we can prove it in the same fashion, so the proof is omitted.

We write 
\[
e_1 =\begin{pmatrix}
    0 & 0 & 0 \\ 1 & 0 & 0
\end{pmatrix},
e_2=
\begin{pmatrix}
    1 & 0 & 0 \\ 0 & -1 & 0
\end{pmatrix},
e_3=
\begin{pmatrix}
    0 & -1 & 0 \\ 0 & 0& 0
\end{pmatrix},
f_1=
\begin{pmatrix}
    0 & 0 & 1\\ 0&0 & 0
\end{pmatrix},
f_2 =
\begin{pmatrix}
    0 & 0 & 0 \\ 0 & 0 &1
\end{pmatrix}.
\]
\begin{lem}\label{lem:clsr}
Up to conjugacy, the Lie subalgebras of $\sr$ are of the following forms:
\[
\sl,\quad \R^2,\quad \s=\spnR{e_2,f_1},\quad \l=\spnR{e_1,f_1},
\]
\[
\m=\spnR{e_1+f_1,\; f_2}\supset \n=\spnR{e_1+f_1},
\]
\[
\mathfrak{so}(2),\quad \spnR{e_1},\quad \spnR{e_2},\quad \spnR{e_2,e_1},
\]
\[
\R^2,\quad \mathfrak{so}(2)\oplus \R^2,\quad \spnR{f_1},
\]
\[
\spnR{e_2}\oplus \R^2,\quad \spnR{e_1}\oplus \R^2,\quad
\spnR{e_1,e_2,f_2},\quad \spnR{e_1,e_2,f_1,f_2}.
\]
Here we write $\mathfrak{so}(2)=\spnR{e_1+e_3}$ and $\R^2=\spnR{f_1,f_2}$.
\end{lem}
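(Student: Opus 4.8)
The plan is to exploit the semidirect–product structure $\sr=\sl\ltimes\R^2$, in which $\R^2=\spnR{f_1,f_2}$ is an abelian ideal and $\sl$ acts through the standard representation. To a subalgebra $\h\subset\sr$ I would attach two invariants: the intersection $\mathfrak{w}\coloneqq\h\cap\R^2$ and the image $\overline{\h}\coloneqq\pi(\h)$ under the projection $\pi\colon\sr\to\sl$ onto the linear part (the differential of $\Lin$). Since $\R^2$ is an ideal, $\pi$ is a Lie algebra homomorphism, so $\overline{\h}$ is a subalgebra of $\sl$; moreover $[\overline{\h},\mathfrak{w}]\subset\h\cap\R^2=\mathfrak{w}$, so $\mathfrak{w}$ is an $\overline{\h}$-submodule of $\R^2$. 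This splits the problem into three finite, independent choices: the conjugacy class of $\overline{\h}$ in $\sl$, the invariant subspace $\mathfrak{w}$, and the way $\h$ is glued over $\overline{\h}$.

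First I would recall the classification of subalgebras of $\sl$: up to conjugacy these are $0$; the three lines $\spnR{e_2}$ (hyperbolic), $\spnR{e_1}$ (nilpotent) and $\mathfrak{so}(2)$ (elliptic); the Borel $\mathfrak{b}=\spnR{e_1,e_2}$; and $\sl$ itself. For each I list the $\overline{\h}$-invariant subspaces of $\R^2$: for $\sl$ and $\mathfrak{so}(2)$ only $0$ and $\R^2$ (irreducibility, resp.\ the absence of a real eigenline); for $\spnR{e_1}$ and $\mathfrak{b}$ the flag $0\subset\spnR{f_2}\subset\R^2$; for the hyperbolic $\spnR{e_2}$ also the two eigenlines $\spnR{f_1}$ and $\spnR{f_2}$; and for $\overline{\h}=0$ every subspace, with all lines $\SL$-conjugate.

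For a fixed pair $(\overline{\h},\mathfrak{w})$, every subalgebra with these invariants is the graph
\[
\h=\mathfrak{w}+\{\,X+\phi(X)\colon X\in\overline{\h}\,\}
\]
of a linear map $\phi\colon\overline{\h}\to\R^2/\mathfrak{w}$, and closure under the bracket is precisely the condition that $\phi$ be a $1$-cocycle, $\phi([X,Y])\equiv X\cdot\phi(Y)-Y\cdot\phi(X)\pmod{\mathfrak{w}}$. Conjugating by a translation $\exp(v)$, $v\in\R^2$, alters $\phi$ by the coboundary $X\mapsto X\cdot v$, so the $\SR$-conjugacy classes over $(\overline{\h},\mathfrak{w})$ are governed by $H^1(\overline{\h},\R^2/\mathfrak{w})$, reduced further by the residual action of the normalizer of $(\overline{\h},\mathfrak{w})$ in $\SL$. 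For $\overline{\h}=\sl$ this vanishes by Whitehead's lemma, for $\mathfrak{so}(2)$ and $\spnR{e_2}$ because the generator acts invertibly on the relevant quotient, and for the Borel $\mathfrak{b}$ by a direct cocycle computation (the relation $[e_1,e_2]=2e_1$ forces the $e_1$-twist to vanish); these cases yield only the untwisted subalgebra $\overline{\h}+\mathfrak{w}$, producing $\R^2$, $\spnR{f_1}$, $\s$, $\spnR{e_2}$, $\mathfrak{so}(2)$, $\mathfrak{so}(2)\oplus\R^2$, $\spnR{e_2}\oplus\R^2$, $\spnR{e_1,e_2,f_2}$ and $\spnR{e_1,e_2,f_1,f_2}$. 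The genuinely new phenomena occur over $\overline{\h}=\spnR{e_1}$: with $\mathfrak{w}=\spnR{f_2}$ and with $\mathfrak{w}=0$ the coboundaries drop out, so $H^1\cong\R$; normalizing the nonzero class by the residual diagonal torus (which rescales the twist) gives exactly the twisted subalgebras $\m=\spnR{e_1+f_1,f_2}$ and $\n=\spnR{e_1+f_1}$, while the zero classes give $\l$ and $\spnR{e_1}$.

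Finally I would assemble the representatives, use the Weyl element of $\SL$ to merge cases related by interchanging the two eigenlines (so $\spnR{e_2,f_1}\sim\spnR{e_2,f_2}$, and similarly for the $\R^2$-supplements), and verify that the entries are pairwise non-conjugate by comparing the invariants $(\dim\h,\overline{\h},\mathfrak{w},[\phi])$. The main obstacle is precisely this final cohomological bookkeeping: computing $H^1(\overline{\h},\R^2/\mathfrak{w})$ in the nilpotent and Borel cases and then quotienting carefully by the combined normalizer-and-translation symmetry, both to normalize each nonzero cocycle to a canonical generator and to certify that no two listed subalgebras coincide up to conjugacy. Each individual computation is short, since everything lives in dimension at most $2$; the care lies entirely in not over- or under-counting the twisted families $\m,\n$. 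As this is exactly the scheme carried out in \cite[Thm.~11]{CKZ24} for $\mathfrak{gl}_2(\R)\ltimes\R^2$, with $\sl$ replacing $\mathfrak{gl}_2(\R)$ so that the central and trace directions disappear, the argument transfers essentially verbatim.
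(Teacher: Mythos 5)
Your proposal is correct and is essentially the paper's own argument: the paper omits the proof of this lemma, saying only that it can be proved ``in the same fashion'' as \cite[Thm.~11]{CKZ24}, and that fashion is precisely the scheme you describe --- attach the invariants $\overline{\h}=\pi(\h)$ and $\mathfrak{w}=\h\cap\R^2$, realize $\h$ as the graph of a $1$-cocycle $\phi\colon\overline{\h}\to\R^2/\mathfrak{w}$, kill coboundaries by translation conjugation, and normalize the surviving classes (nontrivial only over $\overline{\h}=\spnR{e_1}$, yielding $\m$ and $\n$) by the normalizer in $\SL$. One incidental observation confirming your computation: your construction over $(\spnR{e_1},\spnR{f_2})$ produces $\spnR{e_1,f_2}$ for the untwisted class, in agreement with $\l(\mathfrak{u}^\prime,3)=\spnR{e_1,f_2}$ in Section~6, so the lemma's printed $\l=\spnR{e_1,f_1}$ is a typo (indeed $[e_1,f_1]=f_2$, so $\spnR{e_1,f_1}$ is not closed under the bracket).
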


\subsection{Equivalent subgroups in the sense $\sim$}
In this subsection, we collect the connected subgroups stated in Lemma~\ref{lem:clsr} that are pairwise equivalent. Indeed, the following relations hold.
\begin{prop}\label{prop:simsubgrpofSR}
    \begin{enumerate}
        \item $\{e\} \sim SO(2).$
        \item $\R^2 \sim \R \sim SO(2)\ltimes \R^2.$
        \item $\SL \sim \{ \begin{pmatrix}e^a & 0 & 0 \\ 0 & e^{-a}& 0 \end{pmatrix}\colon a\in \R \} \sim \{ \begin{pmatrix}1 & 0 & 0 \\ b & 1& 0 \end{pmatrix}\colon a\in \R \}  $\\
        $\ \qquad \sim \{ \begin{pmatrix}e^a & 0 & 0 \\ b & e^{-a}& 0 \end{pmatrix}\colon a,b\in \R \}.$
        \item $\SL \ltimes \R^2 \sim \{ \begin{pmatrix}e^a & 0 & e \\ 0 & e^{-a}& f \end{pmatrix}\colon a,e,f\in \R \} $\\
        $\sim \{ \begin{pmatrix}e^a & 0 & 0 \\ b & e^{-a}& e \end{pmatrix}\colon a,b,e\in \R \} \sim \{ \begin{pmatrix}e^a & 0 & e \\ b & e^{-a}& f \end{pmatrix}\colon a,b,e,f\in \R \} $\\
        $\sim \{ \begin{pmatrix}1 & 0 & e \\ b & 1& f \end{pmatrix}\colon b,e,f\in \R \}.$      
    \end{enumerate}
\end{prop}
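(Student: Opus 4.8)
The plan is to deduce every equivalence in the statement from one elementary mechanism. Write $K=SO(2)$, embedded in $\SR$ as $\{(k,0):k\in SO(2)\}$; it is compact and $K^{-1}=K$. The key point is that in Definition~\ref{def:proper} the set $SHS^{-1}$ is a \emph{threefold product of sets}, so its elements are $s_1 h s_2^{-1}$ with $s_1,s_2\in S$ chosen independently. Taking $S=K$ gives $SPS^{-1}=KPK$, and therefore: if two subgroups $P,Q$ satisfy $KPK=KQK$, then $P\subseteq KPK=KQK$ and $Q\subseteq KQK=KPK$, so $P\sim Q$ with witness $S=K$. Thus it suffices to compute, for each subgroup appearing in the proposition, its saturation $KPK$, and to check that within each of the four items all the saturations coincide. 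Since $K\subseteq\SL\subseteq\SR$, an equivalence proved this way holds in $G=\SR$.

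Items (1) and (2) are direct saturation computations. For (1), $K\{e\}K=K=K\,SO(2)\,K$, so $\{e\}\sim SO(2)$; indeed every compact subgroup saturates to $K$. For (2), the multiplication rule gives $(k,0)(I,u)(k',0)^{-1}=(k(k')^{-1},\,ku)$, and one checks that each of the one-dimensional translation group $\R$, the full translation group $\R^2$, and $SO(2)\ltimes\R^2$ has saturation exactly $SO(2)\ltimes\R^2$: the rotation $k$ carries a fixed translation direction to every direction, which is what identifies these three groups.

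For items (3) and (4) the linear computation is controlled by the standard decompositions of $SL_2(\R)$. Let $A$, $N^{-}$, $B^{-}$ denote the diagonal, lower-unipotent, and lower-triangular subgroups (matching the matrices listed). I will use $KAK=KB^{-}K=KN^{-}K=\SL$. The Cartan decomposition gives $KAK=\SL$ immediately, and since $A\subseteq B^{-}$ this forces $KB^{-}K=\SL$. This reduces (3) to the identity $KN^{-}K=\SL$: via the singular value decomposition (arranged in $SO(2)$), it is enough that every $\mathrm{diag}(e^{a},e^{-a})\in A$ already lies in $KN^{-}K$, which holds because the singular values of $\bigl(\begin{smallmatrix}1&0\\ b&1\end{smallmatrix}\bigr)$ sweep out all pairs $(\sigma,\sigma^{-1})$ with $\sigma\ge 1$ as $b$ ranges over $\R$. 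Item (4) then combines this with translation bookkeeping: for $R\in\{A,N^{-},B^{-}\}$ with $KRK=\SL$, the identity $(k,0)(r,w)(k',0)^{-1}=(k r (k')^{-1},\,k w)$ realizes every linear part in $KRK=\SL$ and, independently, every translation in $\R^2$, so $K(R\ltimes\R^2)K=\SL\ltimes\R^2$ (and $\SL\ltimes\R^2$ saturates to itself, as it contains $K$).

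The single case requiring a different idea — and the step I expect to be the main obstacle — is the equivalence $B^{-}\ltimes\R\binom{0}{1}\sim\SL\ltimes\R^2$, where the smaller group carries only the one-dimensional translation line $\R\binom{0}{1}$ fixed by $B^{-}$, yet its saturation must recover all of $\R^2$. A naive attempt (fix the linear part by Cartan, then read off the translation) fails: for a target $(g,v)$ with $g$ near the identity the middle element $\ell\in B^{-}$ is forced to be bounded, and the accessible translation directions appear to collapse to a single line. The remedy is to use the freedom of the \emph{outer} rotation. Given $(g,v)$, first choose $k\in SO(2)$ with $k\binom{0}{1}$ parallel to $v$; then, by the Iwasawa decomposition $\SL=B^{-}\,SO(2)$, write $k^{-1}g=\ell\,(k')^{-1}$ with $\ell\in B^{-}$ and $k'\in SO(2)$; finally carry the size of $v$ with the free parameter $t$. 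The computation
\[
(k,0)\,\Bigl(\ell,\,\tbinom{0}{t}\Bigr)\,(k',0)^{-1}=\Bigl(g,\;t\,k\tbinom{0}{1}\Bigr)
\]
then hits $(g,v)$ exactly, since $t\,k\binom{0}{1}$ runs over the whole line through $v$. This gives $K\bigl(B^{-}\ltimes\R\binom{0}{1}\bigr)K=\SL\ltimes\R^2$ and finishes item (4).
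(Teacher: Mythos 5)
Your proof is correct and follows essentially the same route as the paper, which deduces the proposition ``immediately'' from the $KAK$ (Cartan) and $KNK$ (Kostant) decompositions of $\SL$ cited in Fact~\ref{fact:kakknk} — your saturation mechanism $KPK=KQK\Rightarrow P\sim Q$ with $S=K=SO(2)$ is precisely how those decompositions get used. The only differences are that you re-derive $KN^{-}K=\SL$ elementarily via singular values instead of citing Kostant, and that you make explicit the translation bookkeeping (notably the Iwasawa-based argument for $B^{-}\ltimes\R\binom{0}{1}$) that the paper leaves implicit; both are sound.
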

We obtain Theorem~\ref{thm:properSR}~(1) as a corollary of  Proposition~\ref{prop:simsubgrpofSR}.

This proposition immediately follows by the following fact.
\begin{fact}\label{fact:kakknk}
    \begin{enumerate}
        \item (KAK-decomposition, \cite[Thm.\  7.39]{Kna02})  Let $G$ be a reductive Lie group, $G=K\times \mathfrak{p}$ be a Cartan decomposition, and $\a$ be a maximal abelian subalgebra in $\mathfrak{p}$. Then, we have $G=K\exp (\a)K$.
        \item (Kostant decomposition, \cite{Kos73}) Let $G$ be a semisimple Lie group, and $G=KAN$ be an Iwasawa decomposition. Then, we have $G=KNK$.
    \end{enumerate}
\end{fact}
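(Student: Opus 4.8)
The plan is to treat the two decompositions in turn, reducing each to a single key ingredient. For the KAK-decomposition in part~(1), I would start from the global Cartan (polar) decomposition $G=K\exp(\mathfrak{p})$, which realizes $K\times\mathfrak{p}\to G$ as a diffeomorphism and which I take as the principal input, being itself part of the Cartan decomposition of a reductive group. Granting this, the statement $G=K\exp(\mathfrak{a})K$ reduces to the single fact that $\mathrm{Ad}(K)\,\mathfrak{a}=\mathfrak{p}$, i.e.\ every element of $\mathfrak{p}$ is $K$-conjugate into the maximal abelian subspace $\mathfrak{a}$. Indeed, once this is known, for $g\in G$ I write $g=k_1\exp(X)$ with $X\in\mathfrak{p}$, choose $k_2\in K$ with $\mathrm{Ad}(k_2)^{-1}X=H\in\mathfrak{a}$, and use $\exp(X)=k_2\exp(H)k_2^{-1}$ to conclude $g=(k_1k_2)\exp(H)k_2^{-1}\in K\exp(\mathfrak{a})K$.

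The heart of part~(1) is therefore $\mathrm{Ad}(K)\mathfrak{a}=\mathfrak{p}$, which I would prove by a compactness/maximum argument. Fix a regular element $H_0\in\mathfrak{a}$ (one whose centralizer in $\mathfrak{p}$ is exactly $\mathfrak{a}$) and, for a given $X\in\mathfrak{p}$, consider the continuous function $f(k)=B_\theta(\mathrm{Ad}(k)X,\,H_0)$ on the compact group $K$, where $B_\theta$ is the inner product attached to the Cartan involution. Let $k_0$ be a maximizer; differentiating $f$ along one-parameter subgroups $\exp(tZ)$ with $Z\in\mathfrak{k}$ gives $B_\theta([Z,\mathrm{Ad}(k_0)X],H_0)=0$ for all $Z\in\mathfrak{k}$, and by invariance of the form this becomes $B_\theta(Z,[\mathrm{Ad}(k_0)X,H_0])=0$. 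Since $[\mathfrak{p},\mathfrak{p}]\subset\mathfrak{k}$, the bracket $[\mathrm{Ad}(k_0)X,H_0]$ lies in $\mathfrak{k}$, so it must vanish; regularity of $H_0$ then forces $\mathrm{Ad}(k_0)X\in\mathfrak{a}$, as needed. The only delicate points are the existence of a regular $H_0$ and the identity $[\mathfrak{p},\mathfrak{p}]\subset\mathfrak{k}$, both immediate from the Cartan decomposition.

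For the Kostant decomposition $G=KNK$ in part~(2), the plan is to exploit that $KNK$ is stable under left and right multiplication by $K$, hence is a union of double cosets $KgK$. By the Cartan decomposition $G=K\overline{A^+}K$ (a sharpening of part~(1) via the Weyl group), proving $G=KNK$ reduces to showing that $\exp(H)\in KNK$ for every $H$ in the closed positive chamber $\overline{\mathfrak{a}^+}$. Fixing the Iwasawa ordering $G=NAK$, this amounts to finding $k\in K$ such that, writing $k\exp(H)=n\,a\,\kappa$ with $n\in N$, $a\in A$, $\kappa\in K$, the factor $a$ is trivial; for then $\exp(H)=k^{-1}n\kappa\in KNK$. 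I would obtain such a $k$ from Kostant's convexity theorem, which identifies the set of Iwasawa $\mathfrak{a}$-components of $\{k\exp(H):k\in K\}$ with the convex hull $\mathrm{conv}(W\cdot H)$ of the Weyl-orbit of $H$. Because $W\cdot H$ is $W$-invariant, its centroid is $W$-fixed, hence $0$, so $0\in\mathrm{conv}(W\cdot H)$ and the desired $k$ with $a=e$ exists.

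The main obstacle is precisely the convexity theorem underlying part~(2): the inclusion of the set of Iwasawa components inside $\mathrm{conv}(W\cdot H)$ follows from a maximum principle for the Iwasawa projection, but the reverse inclusion (surjectivity onto the full convex hull) genuinely requires a connectedness or Morse-theoretic argument and is the substantive content of Kostant~\cite{Kos73}. I note, however, that for the only case used in this paper, namely $G=\SL$ (real rank one), the Weyl group is $\{\pm1\}$, so $W\cdot H=\{H,-H\}$ and its convex hull is the segment joining them, which trivially contains $0$; there the convexity input is elementary and the whole of part~(2) collapses to a direct computation with $2\times2$ matrices. Accordingly I would either invoke Kostant's theorem in general or, if only the rank-one statement is needed, replace it by this elementary observation.
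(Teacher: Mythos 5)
Your proposal is correct, but note that the paper never proves this statement: it is recorded as a \emph{Fact}, with the citations \cite[Thm.\ 7.39]{Kna02} and \cite{Kos73} standing in for an argument, so the only meaningful comparison is with the proofs in those references --- and your reconstruction matches them. For part (1), reducing $G=K\exp(\mathfrak{a})K$ to $\mathrm{Ad}(K)\mathfrak{a}=\mathfrak{p}$ via the global decomposition $G=K\exp(\mathfrak{p})$, then maximizing $k\mapsto B_\theta(\mathrm{Ad}(k)X,H_0)$ over the compact group $K$ and using $[\mathfrak{p},\mathfrak{p}]\subset\mathfrak{k}$ together with regularity of $H_0$ to force $[\mathrm{Ad}(k_0)X,H_0]=0$ and hence $\mathrm{Ad}(k_0)X\in\mathfrak{a}$, is exactly the standard (Knapp-style) argument; the two points you gloss are indeed routine, though the existence of a regular $H_0$ is not literally ``immediate from the Cartan decomposition'' --- it uses the restricted-root decomposition of $\mathfrak{p}$ under $\mathrm{ad}(\mathfrak{a})$ to see that $H_0$ off the finitely many root hyperplanes has centralizer $\mathfrak{a}$ in $\mathfrak{p}$ --- and your invariance identity holds only up to a sign, which does not affect the vanishing conclusion. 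For part (2), deriving $G=KNK$ by applying Kostant's convexity theorem in the $NAK$ ordering and observing $0\in\mathrm{conv}(W\cdot H)$ (valid because $\mathfrak{a}^{W}=0$ when $G$ is semisimple, the restricted roots spanning $\mathfrak{a}^{*}$) is precisely how the decomposition arises in \cite{Kos73}; since you invoke rather than prove the convexity theorem, your write-up has the same logical footprint as the paper's citation, which is appropriate for a quoted fact. Your closing observation is also apt: every application of this Fact in the paper (the $\sim$-equivalences in Sections 4--6) concerns $\SL$ or $\GL$, which have real rank one, so $W=\{\pm 1\}$ and the needed instance of ``$0$ is attained as an Iwasawa $\mathfrak{a}$-component'' follows from a connectedness/intermediate-value computation (e.g.\ on the second row of $k_\theta \exp(H)$ in $SL_2(\R)$) without the full convexity theorem; the elementary fallback you describe would therefore suffice for the paper's purposes.
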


\subsection{Properness of the connected pairs}

In this subsection, we determine the properness of the pairs in $\SR$ up to the equivalence $\sim$. Our goal is Theorem~\ref{thm:properSR}~(2), and we review its statement.

\newtheorem*{thm:properSR}{\rm\bf Theorem~\ref{thm:properSR}}
\begin{thm:properSR}{\itshape (2)
The properness of pairs among $\SL,$ $\R^2,$  $S,$ $L,$ $M,$ and $N$ is summarized in Table~\ref{tab:properSR}. In Table~\ref{tab:properSR}, we mark an entry with $\pf$ if the corresponding pair is proper; otherwise the entry is left blank.}
\end{thm:properSR}
\begin{table}[hbtp]
 \captionsetup{list=no} 
\caption*{{\bf Table~\ref{tab:properSR}:} Properness of pairs of connected subgroups of $\SR$}\
\centering
\renewcommand{\arraystretch}{1.5}
\begin{tabular}{c@{\quad}cccccc}
\toprule
 \diagbox{$L$}{$H$} & $SL_2(\R)$ & $\R^2$ & $S$ & $L$ & $M$ & $N$ \\
\midrule
$SL_2(\R)$ &  & $\pf$ &  &  & $\pf$ & $\pf$ \\
$\R^2$ & $\pf$ &  &  &  &  & $\pf$ \\
$S$ &  &  &  &  &  & $\pf$ \\
$L$ &  &  &  &  &  &  \\
$M$ & $\pf$ &  &  &  &  &  \\
$N$ & $\pf$ & $\pf$ & $\pf$ &  &  &  \\
\bottomrule
\end{tabular}
\end{table}

\begin{rmk}
We do not refer the ambient group $G$ explicitly, as the properness described above is not changed whether $G=\GR$ or $G=\SR$. We now prove Theorem~\ref{thm:properSR}~(2) under the assumption that $G=\GR$, but a parallel discussion works under the assumption that $G= \SR$.
\end{rmk}

By using the following lemma, we should only prove $\SL \pf N, S \pf N$ and $L \not\pf N$. The first statement immediately follows by the definition, and the second one is  proved in \cite[Prop.\ A.2.1]{Kob90}.
\begin{lem}
    \begin{enumerate}
        \item If a pair of subsets in a topological group has non-compact intersection, then the pair is non-proper.
        \item The connected subgroups whose pair with $\SL$ is proper are $\R^2,$ $M$ and $N$.
        \end{enumerate}
\end{lem}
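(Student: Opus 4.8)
My plan is to dispatch the two parts separately: part~(1) is essentially formal, while part~(2) reduces, via the list in Theorem~\ref{thm:properSR}~(1), to a finite comparison of $\SL$ against six representatives.

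For part~(1) I would instantiate the defining condition of $\pf$ at the compact singleton $S=\{e\}$: then $L\cap\{e\}H\{e\}^{-1}=L\cap H$, so if this intersection is not relatively compact the pair cannot be proper. For the closed subgroups to which the lemma is applied, ``non-compact'' and ``not relatively compact'' coincide, since a closed relatively compact set is compact; this is the generality in which part~(1) will be used below.

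For part~(2), Theorem~\ref{thm:properSR}~(1) shows that every non-trivial connected subgroup of $\SR$ is $\sim$-equivalent to one of $\SL,\R^2,S,L,M,N$, and since $\pf$ is preserved by $\sim$ (Proposition~\ref{prop:simpf}~(1)) it suffices to decide $\SL\pf H$ for these six representatives. I would split them into the non-proper cases $\SL,S,L$ and the proper cases $\R^2,M,N$. For the non-proper cases I would invoke part~(1): realizing $\SL\subset\SR$ as the subgroup with vanishing translational part, one computes $\SL\cap\SL=\SL$, $\SL\cap S=\{\pmat{e^a & 0 & 0 \\ 0 & e^{-a} & 0}\}$ (set the translational parameter of $S$ to $0$), and $\SL\cap L=\{\pmat{1 & 0 & 0 \\ b & 1 & 0}\}$, each a non-compact closed subgroup; hence $\SL\not\pf\SL,S,L$ by part~(1).

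The proper cases are where the real work lies, since there $\SL\cap H=\{e\}$ and part~(1) gives nothing, so one must control $\SL\cap SHS^{-1}$ for \emph{all} compact $S$. Here I would use the conjugation formula $\U(shs^{-1})=gv+(I-gug^{-1})w$ for $s=(g,w)$ and $h=(u,v)$: an element $shs^{-1}$ lies in $\SL$ precisely when its translational part vanishes, i.e. $v=(u-I)g^{-1}w$. For $H=\R^2$ one has $u=I$, forcing $v=0$; for $H=M$ or $H=N$ one has $u-I=b\pmat{0 & 0 \\ 1 & 0}$, so $(u-I)g^{-1}w$ has vanishing first coordinate, whereas $\U(h)=v$ has first coordinate $b$, forcing $b=0$ and then $v=0$. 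In all three cases only $h=e$ survives, so $\SL\cap SHS^{-1}=\{e\}$ for every $S$ (even non-compact), which is relatively compact; thus $\SL\pf\R^2,M,N$, in agreement with \cite[Prop.\ A.2.1]{Kob90}. The one point that must be got right is precisely this first-coordinate collapse for $M$ and $N$: it is what renders the otherwise delicate ``all compact $S$'' condition trivial, and it hinges on the translational parts of $M$ and $N$ having first coordinate equal to the unipotent parameter $b$.
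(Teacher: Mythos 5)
Your part (1) and your treatment of the non-proper cases $\SL$, $S$, $L$ are fine and consistent with the paper, which disposes of part (1) by exactly your observation (instantiate Definition~\ref{def:proper} at $S=\{e\}$) and of part (2) by citing \cite[Prop.\ A.2.1]{Kob90} rather than verifying properness by hand. Your attempt at a self-contained proof of the proper cases, however, has a genuine gap: you read $SHS^{-1}$ in Definition~\ref{def:proper} as the conjugation orbit $\{shs^{-1}\colon s\in S,\,h\in H\}$, whereas in Kobayashi's framework --- and in this paper's own usage, e.g.\ in the proof of $S\pf N$, where two \emph{independent} sequences from the compact set appear on the left and on the right of the $N$-element --- it means $\{s_1hs_2^{-1}\colon s_1,s_2\in S,\,h\in H\}$. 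Under the correct reading your conclusion ``$\SL\cap SHS^{-1}=\{e\}$ for every $S$, even non-compact'' is false: since $e\in H$, taking $S=G$ gives $SHS^{-1}=G$, so the intersection is all of $\SL$; and already for compact $S$ and $H=\R^2$ the intersection contains every element $(g_1g_2^{-1},0)$ with $g_i=\Lin(s_i)$ and $g_1g_2^{-1}\in\SL$, obtained by solving $v=g_2^{-1}w_2-g_1^{-1}w_1$ for the translational parameter. The intersection is thus a nontrivial compact set, not $\{e\}$, and your single-element conjugation computation does not establish properness.

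The good news is that your key observation survives the correction. Writing $s_i=(g_i,w_i)$ and $h=(u,v)$, the condition $s_1hs_2^{-1}\in\SL$ forces $v=ug_2^{-1}w_2-g_1^{-1}w_1$. For $H=M$ or $N$ one has $u-I=b\pmat{0&0\\1&0}$, so the first coordinate of the right-hand side equals that of $g_2^{-1}w_2-g_1^{-1}w_1$, which is bounded as $s_1,s_2$ range over the compact set $S$; since $v_1=b$, this bounds $b$, hence bounds all of $h$, so $h$ ranges over a compact subset $C\subset H$ and $\SL\cap SHS^{-1}\subset SCS^{-1}$ is relatively compact (for $H=\R^2$ the same works with $u=I$). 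With that repair your route is a legitimate, self-contained alternative to the paper's citation of \cite[Prop.\ A.2.1]{Kob90}; as written, though, the properness half of part (2) is not proved.
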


We prove the remaining part of Theorem~\ref{thm:properSR}~(1).
\begin{lem}
    \begin{enumerate}
        \item $\R^2 \pf N$.
        \item $S \pf N$.
        \item $L \not\pf N$.
    \end{enumerate}
\end{lem}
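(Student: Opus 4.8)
The plan is to verify each of the three relations directly from Definition~\ref{def:proper}, using the explicit matrix descriptions of the subgroups together with the asymptotic machinery of Section~3, in particular Proposition~\ref{prop:normcpt}. Recall that $N=\{\begin{pmatrix} 1 & 0 & b \\ b & 1 & \frac12 b^2\end{pmatrix}\colon b\in\R\}$, so an element of $N$ is determined by the single parameter $b$, and $N$ is noncompact precisely because $b\to\pm\infty$. For each pair I would take a sequence $(\ell_n)$ in $L$ (resp.\ in $\R^2$ or $S$) and a compact set $S\subset\GR$, and study when $\ell_n\in S\,n_{b_n}S^{-1}$ forces $(b_n)$ and the parameters of $\ell_n$ to remain bounded (giving $\pf$) or allows them to escape to infinity while staying in the conjugated set (giving $\not\pf$).

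For $\R^2\pf N$ and $S\pf N$, the natural strategy is to look at the linear parts. The linear part of $N$ is the unipotent matrix $\begin{pmatrix}1&0\\b&1\end{pmatrix}$, whose operator norm grows like $|b|$ as $b\to\infty$; conjugation by a compact set changes this norm only by bounded multiplicative factors (Proposition~\ref{prop:normcpt}). Since $\R^2$ has trivial linear part and $S$ has diagonal linear part $\mathrm{diag}(e^a,e^{-a})$, I would compare the growth of $\Lin$ across the two groups: if $\ell_n\in S\,n_{b_n}S^{-1}$ then the linear parts must match up to the compact conjugation, and for $\R^2$ this immediately bounds $|b_n|$ (the linear part of $\R^2$ is trivial, so $|b_n|\asymp_\times 1$), yielding $\R^2\pf N$. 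For $S\pf N$ the off-diagonal growth of $N$'s linear part cannot be absorbed by the diagonal linear part of $S$ up to compact conjugation, so again $|b_n|$ stays bounded, and then one checks the translational parts remain bounded as well; I expect $\asymp_\times$ estimates to close this cleanly.

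For $L\not\pf N$, where $L=\{\begin{pmatrix}1&0&0\\ b&1&f\end{pmatrix}\colon b,f\in\R\}$, I would exhibit an explicit noncompact sequence in an intersection $L\cap S\,N\,S^{-1}$. The key observation is that $L$ and $N$ have the \emph{same} linear part family $\begin{pmatrix}1&0\\b&1\end{pmatrix}$, so the linear-part obstruction that worked above disappears; the difference lives entirely in the translational component, where $N$ contributes $(b,\tfrac12 b^2)$ and $L$ contributes the free parameter $(0,f)$. I would choose $b_n\to\infty$ in $N$ and show that, after conjugating by a suitable fixed (hence compact) element $s$, the resulting translational part can be matched by an element of $L$ with $b$-parameter equal to $b_n$ and an appropriate $f_n$, so that $\ell_n\in sNs^{-1}$ for all $n$ while $\ell_n$ diverges. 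Concretely, one computes $s\,n_b\,s^{-1}$ for a well-chosen translation $s$ and checks that the first column of the linear part together with the translational part lies in $L$.

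The main obstacle will be the $L\not\pf N$ case: one must identify the single conjugating element (or compact family) that aligns $N$ with $L$, which requires computing the conjugation $s\,n_b\,s^{-1}$ in $\GR$ explicitly and recognizing when it falls in $L$. Because $N$'s translational part is the \emph{quadratic} $\frac12 b^2$, the parabolic term must be killed or absorbed by the affine conjugation; verifying that a genuine choice of $s$ does this (rather than only matching the linear part) is the delicate point. Once the right $s$ is found the divergence of $b_n$ gives noncompactness of the intersection immediately, and by Lemma's part~(1) noncompact intersection forces non-properness. The other two relations I expect to be routine applications of Proposition~\ref{prop:normcpt} and the $\asymp_\times$ calculus.
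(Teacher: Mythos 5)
Your outline for part (2) has a fatal gap: the linear-part obstruction you invoke does not exist. The linear part of $N$ is the unipotent one-parameter group $\left\{\begin{pmatrix}1&0\\b&1\end{pmatrix}\right\}$, and by the Kostant decomposition $\SL=KN'K$ (Fact~\ref{fact:kakknk}~(2); compare Proposition~\ref{prop:simsubgrpofSR}~(3)) this subgroup is equivalent, in the sense of $\sim$, to the diagonal subgroup $\Lin(S)$: writing $\begin{pmatrix}1&0\\b&1\end{pmatrix}=k_1\,\mathrm{diag}(\sigma_b,\sigma_b^{-1})\,k_2$ with $k_1,k_2\in SO(2)$ and $\sigma_b\asymp_\times |b|$ shows that the off-diagonal growth \emph{is} absorbed by compact conjugation. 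Hence $\Lin(S)\not\pf\Lin(N)$ in $\GL$, your conclusion that ``$|b_n|$ stays bounded'' is false, and no argument using only linear parts can prove $S\pf N$; the properness here is a genuinely affine phenomenon. The paper's proof is forced into the translational component: since elements of $S$ have vanishing second translational coordinate, the $(2,3)$-entry of $s_n\,n_{t_n}\,s_n'$ must vanish, and---crucially via the quadratic term $\tfrac12 t_n^2$ in $N$---this yields $|d_n|\lesssim_\times 1/|t_n|$, then $|d_nt_n+b_n|\asymp_\times 1$, $|c_n|\asymp_\times 1$, $|C_n|\lesssim_\times 1/|t_n|$, $|A_n|\asymp_\times 1$, and finally the $(2,1)$-entry grows like $|t_n|$, contradicting that it vanishes on $S$. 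Nothing in your plan produces these translational constraints, so your proof of (2) would not close.

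Parts (1) and (3) are in better shape. For (1) your argument is correct and is essentially the paper's mechanism (the paper phrases it as properness of $N\curvearrowright \GR/\R^2\cong\GL$ via the isomorphism $\Lin|_N$ onto the closed subgroup $\Lin(N)$; your version---trivial linear part of $\R^2$ forces $b_n$ bounded, hence $n_{b_n}$ ranges in a compact set---amounts to the same thing). For (3), however, your primary suggestion of a \emph{single} fixed conjugator provably fails: if $s=(g,w)$ satisfies $s\,n_b\,s^{-1}\in L$ for unboundedly many $b$, then keeping the linear part lower unipotent forces $g$ lower triangular, and then the first translational coordinate of $s\,n_b\,s^{-1}$ equals $g_{11}b\neq 0$, which diverges, so the conjugate leaves $L$. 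Your hedge ``(or compact family)'' is the correct escape: the paper uses the $t$-dependent bounded family $\begin{pmatrix}1&-\frac2t&0\\0&1&0\end{pmatrix}$, $\begin{pmatrix}-1&\frac2t&0\\0&-1&0\end{pmatrix}$ for $|t|>1$, which gives $\begin{pmatrix}1&0&0\\-t&1&\frac12 t^2\end{pmatrix}\in L$, but you have not exhibited such a family, and finding it is exactly the content of the paper's computation.
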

\begin{proof}
    (1) We prove the action $N \curvearrowright \GR/\R^2 \cong \GL$ is proper. Consider the continuous group isomorphism
    \[ \Lin\colon N \to \Lin(N)\coloneqq \{\begin{pmatrix} 1 & 0 \\ t & 1\end{pmatrix}\colon t \in \R\},
    \begin{pmatrix}  1 & 0 & t \\ t& 1 & \frac12 t^2 \end{pmatrix}
    \mapsto \begin{pmatrix} 1 & 0 \\ t & 1\end{pmatrix}.\]
    The isomorphism $\Lin$ is compatible to the group actions, that is, the following commutative diagram holds for all $s\in N$: 
    \[
    \begin{tikzcd}
  \GR/\R^2 \arrow[r, "\cong"] \arrow[d, "s \cdot" left]
    & \GL \arrow[d, "\Lin(s) \cdot" left] \\
  \GR/\R^2 \arrow[r, "\cong"]
    & \GL
\arrow[phantom, from=1-1, to=2-2, "\circlearrowright" description, pos=0.52]
   \end{tikzcd}
   \]
Since $\Lin(N)$ is a closed subgroup of $\GL$, the action $\Lin(N) \curvearrowright \GL$ is proper. Thus, the action $N \curvearrowright \GR/\R^2$ is also proper.

    (2) Assume the contrary. There exist sequences $\begin{pmatrix} 1 & 0 & t_n \\ t_n & 1 &\frac12 t_n^2\end{pmatrix} \in N$, and $\begin{pmatrix} a_n & c_n & e_n \\ b_n & d_n &f_n\end{pmatrix},$ $\begin{pmatrix} A_n & C_n & E_n \\ B_n & D_n & F_n\end{pmatrix} $ which are contained in some compact set of $\GR$, such that the product $\begin{pmatrix} a_n & c_n & e_n \\ b_n & d_n &f_n\end{pmatrix} \begin{pmatrix} 1 & 0 & t_n \\ t_n & 1 &\frac12 t_n^2\end{pmatrix} \begin{pmatrix} A_n & C_n & E_n \\ B_n & D_n & F_n\end{pmatrix} \in S$ and $|t_n| \to \infty$. We have 
    \begin{align*}
    &\begin{pmatrix} a_n & c_n & e_n \\ b_n & d_n &f_n\end{pmatrix} \begin{pmatrix} 1 & 0 & t_n \\ t_n & 1 &\frac12 t_n^2\end{pmatrix} \begin{pmatrix} A_n & C_n & E_n \\ B_n & D_n & F_n\end{pmatrix} \\
    &= \begin{pmatrix} * & C_n(a_n+c_nt_n) +c_nD_n & * \\ A_n(b_n+d_nt_n)+d_nB_n & * & (b_n+d_nt_n)E_n+d_nF_n+b_nt_n+\frac12d_nt_n^2+f_n \end{pmatrix}
    \end{align*}
    Since (2,3)-entry is zero, we have 
    \[ |d_n| \leq \frac{2}{t_n^2}(|b_n| +|d_n||E_n||t_n|+ |b_n||t_n| +|f_n|) \lesssim_\times \frac{1}{|t_n|},\]
    and 
    \[ |d_nt_n+b_n| \geq |b_n| - |2(\frac12 d_nt_n+b_n)| \asymp_\times 1.\]
    The last evaluation follows by the following evaluations;
    \[ |b_n| \geq \frac{\begin{vmatrix} a_n & b_n \\ c_n &d_n \end{vmatrix}-|a_n||d_n|}{|c_n|} \gtrsim_\times 1,\]
    \[ |\frac12 d_nt_n+b_n| \leq \frac{1}{|t_n|}(|b_n|+|d_n||E_n||t_n| +|d_n||F_n| +|f_n|) \lesssim_\times \frac{1}{|t_n|}.\]
    Similarly, we have $|c_n| \asymp_\times 1$. Since (1,2) entry is zero, we have 
    \[ |C_n| \leq \frac{|c_n||D_n|}{|c_n||t_n|-|a_n|} \lesssim_\times \frac{1}{|t_n|}.\]
    Thus, we have $|A_n| \asymp_\times 1$. Hence, the (2,1) entry
    \[ |A_n(b_nt_n+d_n) +d_nB_n| \geq |A_n||b_nt_n+d_n| -|d_n||B_n| \gtrsim_\times |t_n|\]
    is non-zero, which contradicts that the (2,1)-entry is zero. Therefore, we have $S \pf N$.

    (3) For $|t| >1$, we have 
    \[ \begin{pmatrix} 1 & -\frac{2}{t} & 0 \\ 0 & 1 & 0 \end{pmatrix}\begin{pmatrix} 1 & 0 & t \\ t & 1& \frac12 t^2 \end{pmatrix}\begin{pmatrix} -1 & \frac2t & 0 \\ 0 & -1& 0 \end{pmatrix}=\begin{pmatrix} 1 & 0 & 0 \\ -t & 1& \frac12 t^2 \end{pmatrix} \in L .\]
    Since all entries of the matrices $\begin{pmatrix} 1 & -\frac{2}{t} & 0 \\ 0 & 1 & 0 \end{pmatrix}$ and $\begin{pmatrix} -1 & \frac2t & 0 \\ 0 & -1& 0 \end{pmatrix} \in \GR$ is bounded, and $\begin{pmatrix} 1 & 0 & t \\ t & 1& \frac12 t^2 \end{pmatrix} \in N$. Therefore, we have $L \not\pf N.$
\end{proof}

\subsection{Non-equivalence of the subgroups with respect to the relation $\sim$ }\label{subsec:ne}
We finish this section by proving the following proposition.
\begin{prop}
The subgroups of $\SR$, $\{e\}$, $\SR$, $\SL$, $\R^2$, $S$, $L$, $M$, $N$ are pairwise non-equivalent in the sense of the notation $\sim$.
\end{prop}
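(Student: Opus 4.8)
The plan is to use Proposition~\ref{prop:simpf}(1), which says that $\sim$ preserves $\pf$: if $X\sim Y$ then $X\pf H\iff Y\pf H$ for every subgroup $H$ of $\SR$. Consequently, to show $X\not\sim Y$ it is enough to exhibit one \emph{witness} $H$ that is proper with exactly one of $X$ and $Y$. I would split the eight subgroups into the two trivial ones, $\{e\}$ and $\SR$, and the six subgroups $\SL,\R^2,S,L,M,N$ of Theorem~\ref{thm:properSR}(1), and separate them in stages.

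For the easy separations I would argue as follows. Since $\{e\}$ is the only relatively compact subgroup on the list and a compact subgroup is proper with everything, the single witness $H=\SR$ separates $\{e\}$ from all the others at once: $\{e\}\pf\SR$, while any non-compact $X$ has $X\cap S\SR S^{-1}=X$ and hence $X\not\pf\SR$. Among the six subgroups, I would read off from Table~\ref{tab:properSR} the set of partners with which each is proper; these six rows are pairwise distinct, so any two of the six are separated by a witness already on the list (e.g.\ $\SL$ and $M$ by $\R^2$, or $L$ from any other by $N$ or $\SL$). Finally, $\SR$ is proper only with relatively compact subgroups, because $\SR\cap SHS^{-1}=SHS^{-1}$ forces $H$ to be relatively compact; this separates $\SR$ from $\SL,\R^2,S,M,N$, each of which is proper with a non-compact member of the list (such as $\SL\pf N$ or $M\pf\SL$) with which $\SR$ is not.

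The one pair left untouched, and \textbf{the main obstacle}, is $\SR$ versus $L$. Its row in Table~\ref{tab:properSR} being empty, $L$ is proper with no non-compact connected subgroup, so $L$ and $\SR$ share the same properness profile against every connected subgroup and no connected witness can tell them apart. I would therefore prove $L\not\sim\SR$ directly, by showing $\SR\not\subseteq SLS^{-1}$ for every compact $S$. Writing $S_0=\Lin(S)$, $S_1=\U(S)$, the membership $(g,v)\in SLS^{-1}$ is equivalent to $s_1^{-1}(g,v)s_2\in L$ for some $s_i=(\sigma_i,p_i)\in S$, that is, to the two requirements that $\sigma_1^{-1}g\sigma_2$ be lower-unipotent and that the first coordinate of $\sigma_1^{-1}(v-p_1+gp_2)$ vanish. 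Taking $g=\mathrm{diag}(e^a,e^{-a})$ with $a$ large, the first requirement forces, via $\det\sigma_1=1$, the lower-right entry of $\sigma_1$ to be $O(e^{-a})$ and its upper-right entry to stay bounded away from $0$; substituting into the second requirement, every term is bounded uniformly in $a$ — the a priori dangerous term, in which the $e^{a}$ coming from $gp_2$ meets precisely the $O(e^{-a})$ entry of $\sigma_1$, stays bounded — and this pins $|v_2|$ below a constant depending only on $S$. Hence an element $(\mathrm{diag}(e^a,e^{-a}),(0,R))$ with $R$ and then $a$ large enough lies outside $SLS^{-1}$, giving $\SR\not\subseteq SLS^{-1}$ and so $L\not\sim\SR$. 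The hard part is exactly this uniform boundedness of $|v_2|$, i.e.\ recognizing that the linear constraint couples $\sigma_1$ to $g$ in a way that neutralizes the large translation term.
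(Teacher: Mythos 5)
Your proposal is correct, and its skeleton coincides with the paper's (Subsection~\ref{subsec:ne}): both use Proposition~\ref{prop:simpf}(1) together with Table~\ref{tab:properSR} to separate every pair except $(L,\SR)$, and both isolate that pair as the genuine difficulty, since the row of $L$ in the table is empty and no connected subgroup can serve as a witness. The divergence is in how $L\not\sim\SR$ is then proved. The paper stays inside the $\pf$-calculus: it exhibits the non-compact \emph{subset} $S'=\left\{\begin{pmatrix}1&0&t^2\\ t&1&0\end{pmatrix}\colon t\in\R\right\}\subset\SR$, shows by asymptotic estimates (with the $\lesssim_\times$, $\asymp_\times$ machinery used throughout the paper) that $S'\pf L$, notes that trivially $S'\not\pf\SR$, and concludes by Proposition~\ref{prop:simpf}(1) --- which, crucially, is stated for subsets and not only for subgroups. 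You instead unwind the definition of $\sim$ and show $\SR\not\subseteq SLS^{-1}$ for every compact $S$ via the test elements $\bigl(\mathrm{diag}(e^a,e^{-a}),(0,R)\bigr)$; your estimates check out, although the cleanest route to them is the identity $\sigma_1 u=g\sigma_2$ with $u$ lower unipotent, whose $(2,2)$- and $(1,2)$-entries give $\delta=e^{-a}D=O(e^{-a})$ and $\beta=e^{a}B$ exactly --- your attribution of $\delta=O(e^{-a})$ to $\det\sigma_1=1$ is slightly misplaced (boundedness of $\sigma_2$ does that; $\det\sigma_1=1$ is what then forces $|\beta|$ bounded below) --- after which the vanishing first coordinate of $\sigma_1^{-1}(v-p_1+gp_2)$ pins $|v_2|\leq C(S)$ because the dangerous product $\delta\cdot e^{a}p_{2,1}$ stays bounded, exactly as you say. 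As for what each approach buys: yours is self-contained and more elementary, requiring nothing beyond Definition~\ref{def:proper}(2), and it makes visible the concrete mechanism coupling $\sigma_1$ to $g$; the paper's witness argument is shorter to run, has the shape predicted by the discontinuity duality theorem (it produces an explicit member of $\pf(L,G)$ not in $\pf(\SR,G)$), and reuses the paper's standing asymptotic toolkit, whereas a direct containment computation like yours must be redone from scratch for each new pair.
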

By Proposition~\ref{prop:simpf} and Theorem~\ref{thm:properSR}~(2), it suffices to prove the following lemma.
\begin{lem}
    $L \not\sim \SR.$
\end{lem}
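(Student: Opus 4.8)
The plan is to distinguish $L$ from $\SR$ using invariants preserved by the equivalence relation $\sim$. The cleanest invariant is \emph{dimension}: if $L \sim \SR$, then by definition there exists a compact subset $S$ of $G$ with $\SR \subset S\,L\,S^{-1}$. Since $L$ is two-dimensional while $\SR$ is six-dimensional, I would argue that conjugating a two-dimensional subgroup by elements of a compact set cannot cover a six-dimensional group. Concretely, $SLS^{-1} = \bigcup_{s\in S} sLs^{-1}$ is a continuous image of $S \times L$, hence has topological (covering) dimension at most $\dim S + \dim L \le \dim G + 2$; but this crude bound is not immediately useful since $\dim S$ can be as large as $\dim G$. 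So the dimension count must be done more carefully, by noting that $SLS^{-1}$, as a subset of the $6$-dimensional manifold $\SR$, is a countable union (using second countability / a countable dense cover of $S$) of $2$-dimensional submanifolds $s L s^{-1}$, and therefore has measure zero in $\SR$ by Sard-type or Baire-category reasoning. Since $\SR$ itself does not have measure zero, the inclusion $\SR \subset SLS^{-1}$ is impossible.

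Alternatively, and more in keeping with the elementary tone of this section, I would invoke \textbf{Proposition~\ref{prop:simpf}~(1)} together with the properness data already established. If $L \sim \SR$, then $L$ and $\SR$ must have exactly the same properness behavior against every subgroup $H$: that is, $L \pf H \iff \SR \pf H$ for all $H$. But by Table~\ref{tab:properSR} (equivalently the lemmas just proved), $L \not\pf H$ for \emph{every} subgroup $H$ in the list, whereas $\SR$ contains noncompact groups such as $\SL$, and one checks directly that $\SR \not\pf \SR$ while the whole ambient group satisfies $\SR \pf \{e\}$. The decisive discrepancy is that $\SR$ acts properly on $G/\{e\} = G$ when restricted appropriately, but more simply: $\SR \pf \R^2$ fails or succeeds in a way $L$ cannot match. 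I would pin down one specific witness $H$ for which exactly one of $L \pf H$, $\SR \pf H$ holds, contradicting the duality-preserving property of $\sim$.

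The most transparent choice of witness is to use the ambient group itself: a subset $L' \sim L$ of $G$ with $L' \subset SLS^{-1}$ would have to be relatively compact intersected against many groups, but $\SR$ is not relatively compact and its conjugates fill out a set far larger than any $SLS^{-1}$. Making this precise, I would take $H = \SR$ and observe $\SR \not\pf \SR$ (a group never acts properly on itself unless compact, since $\SR \cap \SR = \SR$ is noncompact, giving non-properness by the first lemma). If $L \sim \SR$ then $L \pf H \iff \SR \pf H$ would force $L \not\pf \SR$; this is consistent, so $\SR$ is not itself the separating witness. Instead I would use $H = \R^2$: from Table~\ref{tab:properSR}, neither $L$ nor $\R^2$ pairs properly, so this also fails to separate. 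The genuinely separating property is $\SR \pf \{e\}$ is false and $L \pf \{e\}$ is false too --- so properness against subgroups does \emph{not} separate them, and I must fall back on dimension.

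Therefore the main obstacle, and the step I expect to require the real work, is establishing that $\sim$-equivalence forces equality of dimension (or at least an inequality that $\dim L = 2 < 6 = \dim \SR$ violates). I would prove the lemma \textbf{$SLS^{-1}$ has empty interior in $\SR$ whenever $\dim L < \dim G$ and $S$ is compact}, via the measure-zero argument: cover $S$ by countably many points $s_i$ whose neighborhoods control the image, write $SLS^{-1} \subset \bigcup_i s_i L s_i^{-1} \cdot (\text{small})$, and use that a countable union of lower-dimensional immersed submanifolds is meager and null in the $6$-dimensional Lie group $\SR$. Since $\SR \subset SLS^{-1}$ would then exhibit the whole group as a null set, we reach a contradiction, yielding $L \not\sim \SR$. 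The delicate point is handling the compact factor $S$ rigorously so that the union genuinely stays within a countable family of $2$-dimensional pieces; once that is in place, the conclusion is immediate.
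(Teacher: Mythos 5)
Your proposal ultimately rests on the dimension argument (you yourself flag it as the step carrying the real work), and that argument is unsalvageable: dimension is \emph{not} an invariant of $\sim$. The paper's own Proposition~\ref{prop:simsubgrpofSR} records, via the KAK and Kostant decompositions (Fact~\ref{fact:kakknk}), equivalences such as $\R^2 \sim \R$, $\SL \sim \{\pmat{e^a & 0 & 0\\ 0 & e^{-a} & 0}\}$, and even $\SR \sim \{\pmat{e^a & 0 & e\\ 0 & e^{-a} & f}\colon a,e,f\in\R\}$, a three-dimensional subgroup equivalent to the five-dimensional $\SR$ (which, incidentally, is $5$-dimensional, not $6$). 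So compact smearing of a low-dimensional subgroup can fill a much higher-dimensional one: $K\exp(\a)K=\SL$ inflates a one-parameter group to a three-dimensional group. Your measure-zero lemma fails concretely at exactly the point you call delicate: in Definition~\ref{def:proper} the set $SLS^{-1}$ is $\{s_1 l s_2^{-1}\colon s_1,s_2\in S,\ l\in L\}$ with $s_1,s_2$ \emph{independent}, so it is not a union of conjugates $sLs^{-1}$ at all; in particular, if $S$ has nonempty interior then $SLS^{-1}\supset SS^{-1}$ already contains an open set, so it is neither meager nor null. Even for genuine conjugation, a union over a continuum of $s$ is not a countable union of two-dimensional pieces, as the example $K\exp(\a)K$ shows. (Your side remark that $L\pf\{e\}$ is false is also wrong --- every closed subgroup satisfies $L\pf\{e\}$ --- but that is peripheral.)

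What you correctly sensed, and then abandoned, is the right track: properness against the connected subgroups of Table~\ref{tab:properSR} genuinely cannot separate $L$ from $\SR$, since both pair properly only with the compact ones. The missing idea is that Definition~\ref{def:proper} defines $\pf$ for arbitrary \emph{subsets}, and Proposition~\ref{prop:simpf}~(1) holds at that level of generality, so the separating witness need not be a subgroup. The paper's proof takes the non-compact curve $S^\prime = \{\pmat{1 & 0 & t^2\\ t & 1 & 0}\colon t\in\R\}\subset\SR$, proves $S^\prime \pf L$ by direct asymptotic estimates on matrix entries (showing the $(1,3)$-, $(1,2)$- and $(1,1)$-entries of $s_n\,g_n\,s_n^\prime$ cannot simultaneously match an element of $L$ as $|t_n|\to\infty$), and observes $S^\prime \not\pf \SR$ trivially, since $S^\prime\cap\SR=S^\prime$ is non-compact. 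If $L\sim\SR$ held, Proposition~\ref{prop:simpf}~(1) applied to $S^\prime \pf L$ would force $S^\prime \pf \SR$, a contradiction. Without enlarging the test class from subgroups to subsets in this way, your proposal has no valid route to the conclusion.
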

\begin{proof}
    We prove $S^\prime \pf L$, where $S^\prime$ is the non-compact subset in $\SR$ defined by
    \[ S^\prime \coloneqq \{ \begin{pmatrix} 1 & 0 & t^2 \\ t & 1 & 0 \end{pmatrix}\colon t\in \R\}. \]
    Assume the contrary, then there exists sequences $\begin{pmatrix} a_n & c_n & e_n \\ b_n & d_n & f_n \end{pmatrix}, \begin{pmatrix} A_n & C_n & E_n\\ B_n & D_n& F_n \end{pmatrix}$ which is contained in some compact set, such that
    \begin{align*} &\begin{pmatrix} a_n & c_n & e_n \\ b_n & d_n & f_n \end{pmatrix} \begin{pmatrix} 1 & 0 & t_n^2 \\ t_n & 1 & 0 \end{pmatrix} \begin{pmatrix} A_n & C_n & E_n\\ B_n & D_n& F_n \end{pmatrix} \\
    &= \begin{pmatrix} (a_n+c_nt_n)A_n+c_nB_n & (a_n +c_nt_n)C_n+c_nD_n & (a_n+c_nt_n)E_n+c_nF_n +a_nt_n^2+e_n \\ * & * & *\end{pmatrix} \in L
    \end{align*}
    Since the (1,3)-entry is zero, we have 
    \[ |a_n| \leq \frac{1}{|t_n|^2} (|e_n| + |c_n||F_n|+|a_n||E_n|+|c_n||E_n||t_n|) \lesssim_\times \frac{1}{|t_n|}.\]
    Thus, we have $|c_n| \asymp_\times1.$ Since (1,2)-entry is zero, we have 
    \[ |C_n| \leq \frac{|c_n||D_n|}{|c_n||t_n|-|a_n|} \lesssim_\times \frac{1}{|t_n|}.\]
    Thus, we have $|A_n| \asymp_\times 1.$ Hence, we have
    \[ |(a_n+c_nt_n)A_n+c_nB_n| \gtrsim_\times|c_n||A_n||t_n|-|a_n||A_n|-|c_n||B_n| \gtrsim_\times |t_n|,\]
    which contradicts that the (1,1)-entry is 1. Therefore, we have $S^\prime \pf L$.

    Obviously, $S^\prime \not\pf (\SR)$, thus we have $L \not\sim \SR$ by Proposition~\ref{prop:simpf}. 
    
\end{proof}


\section{Classification of proper pairs in $G=\GL$}

\subsection{Classification of Lie subalgebras of $\mathfrak{gl}_2(\R)$}
In this section, we review the classification of Lie subalgebras of $\mathfrak{gl}_2(\R)$ (for example, see \cite{PW77}).

\begin{fact}
Up to conjugacy, the Lie subalgebras of $\mathfrak{gl}_2(\R)$ are of the following forms:
\begin{gather*}
 \hfir= \R\begin{pmatrix} 1 & 0 \\ 0 & 1\end{pmatrix}, \hsec{\alpha} = \R\begin{pmatrix} \alpha + 1 & 0 \\ 0 & \alpha - 1\end{pmatrix}, \hthi = \R\begin{pmatrix} 1 & 0 \\ 1 & 1\end{pmatrix}, \\
 \hfou{\beta}= \R\begin{pmatrix} \beta + 1 & 0 \\ 0 & \beta - 1\end{pmatrix} \oplus \R \begin{pmatrix} 0 & 0 \\ 1 & 0\end{pmatrix}, 
 \hfif= \R\begin{pmatrix} 1 & 0 \\ 0 & 1\end{pmatrix}\oplus \R\begin{pmatrix} 0 & 0 \\ 1 & 0\end{pmatrix},\\
 \hsix= \R\begin{pmatrix} 1 & 0 \\ 0 & -1\end{pmatrix}\oplus \R\begin{pmatrix} 1 & 0 \\ 0 & 1\end{pmatrix}, \mathfrak{o}^\prime(\gamma)= \R\begin{pmatrix} \gamma & -1 \\ 1 & \gamma\end{pmatrix},\mathfrak{co} =\R \begin{pmatrix} 0 & -1 \\ 1 & 0\end{pmatrix}\oplus\R \begin{pmatrix}1 & 0 \\ 0 & 1\end{pmatrix},\\ \mathfrak{p}=\R \begin{pmatrix} 0 & 0 \\ 1 & 0\end{pmatrix}\oplus \R \begin{pmatrix} 1 & 0 \\ 0 & -1\end{pmatrix} \oplus \R \begin{pmatrix} 1 & 0 \\ 0 & 1\end{pmatrix},\mathfrak{u}^\prime= \R\begin{pmatrix} 0 & 0 \\ 1 & 0\end{pmatrix},\sl. 
\end{gather*}
\end{fact}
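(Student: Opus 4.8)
The plan is to exploit the central decomposition $\mathfrak{gl}_2(\R) = \R I \oplus \sl$, where $I = \pmat{1 & 0 \\ 0 & 1}$ spans the center and $\sl$ is the trace-zero part, and to reduce the problem to the (standard) classification of subalgebras of $\sl$. Let $\pi\colon \mathfrak{gl}_2(\R) \to \sl$, $X \mapsto X - \tfrac12\mathrm{tr}(X) I$, be the projection along the center. Since $\R I$ is a central ideal, $\pi$ is a Lie algebra homomorphism, so for any subalgebra $\h \subseteq \mathfrak{gl}_2(\R)$ the image $\mathfrak{s} \coloneqq \pi(\h)$ is a subalgebra of $\sl$, while $\h \cap \R I$, being a subspace of the line $\R I$, is either $0$ or $\R I$. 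The strategy is then to run through the conjugacy classes of subalgebras $\mathfrak{s}\subseteq\sl$ and, for each, determine all $\h$ with $\pi(\h)=\mathfrak{s}$.

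First I would recall the classification of the subalgebras of $\sl$ up to $\SL$-conjugacy: the zero algebra; the three types of lines $\R X$ distinguished by the sign of $\det X$, namely the elliptic $\R R$, the hyperbolic $\R H$, and the nilpotent $\R E$, where $R = \pmat{0 & -1 \\ 1 & 0}$, $H = \pmat{1 & 0 \\ 0 & -1}$, $E = \pmat{0 & 0 \\ 1 & 0}$; the Borel $\mathfrak{b} = \spnR{H, E}$; and $\sl$ itself. This is standard, and can be read off from the subalgebras contained in $\sl$ that appear in Lemma~\ref{lem:clsr}.

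Next comes the lifting step, which splits into two cases. If $\R I \subseteq \h$, then for each $X \in \mathfrak{s}$ some $X + tI$ lies in $\h$, whence $X \in \h$; thus $\h = \mathfrak{s} \oplus \R I = \pi^{-1}(\mathfrak{s})$ is uniquely determined, and feeding in the list of $\mathfrak{s}$ yields $\hfir, \hsix, \hfif, \mathfrak{co}, \mathfrak{p}$ and $\mathfrak{gl}_2(\R)$. If instead $\h \cap \R I = 0$, then $\pi|_\h$ is injective and $\h$ is the graph $\{X + \phi(X) I : X \in \mathfrak{s}\}$ of a linear functional $\phi\colon \mathfrak{s} \to \R$. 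Because $I$ is central, $[X + \phi(X)I,\, Y + \phi(Y) I] = [X,Y]$, so $\h$ is closed under the bracket exactly when $\phi$ vanishes on $[\mathfrak{s}, \mathfrak{s}]$. Applying this to each $\mathfrak{s}$: the three line types give $\hsec{\alpha}$, $\mathfrak{o}^\prime(\gamma)$, and (after normalizing a nonzero value of $\phi$) $\hthi$ and $\mathfrak{u}^\prime$; the Borel gives $\hfou{\beta}$, where $\phi$ is forced to kill $[\mathfrak{b},\mathfrak{b}] = \R E$, leaving one free parameter; and $\sl$, being perfect, forces $\phi = 0$ and returns $\sl$.

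The main obstacle I expect is the final normalization: turning the raw parameter $c = \phi(H)$ (or $\phi(R)$) into the stated canonical forms and deciding exactly which parameter values are genuinely inequivalent. This requires conjugating by the normalizer $N_{\GL}(\mathfrak{s})$ — rescaling by diagonal elements to fix a nonzero nilpotent entry, or applying the Weyl element — and tracking the induced action on $\phi$. One must check, for instance, that $\hsec{\alpha}$ is determined only by its eigenvalue data, so that the family is parametrized by $\alpha$ with the expected identification $\hsec{\alpha} \sim \hsec{-\alpha}$, and likewise for $\hfou{\beta}$ and $\mathfrak{o}^\prime(\gamma)$; these verifications are short but must be carried out case by case. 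The remaining bookkeeping — matching each lifted $\h$ against the displayed list and confirming the bracket relations — is routine.
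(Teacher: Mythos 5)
Your proposal is correct, and the first thing to note is that the paper does not actually prove this statement: it is imported as a known Fact with a pointer to the literature (\cite{PW77}). Your argument therefore supplies a self-contained proof where the paper has none. The route you take --- splitting $\mathfrak{gl}_2(\R)=\R I\oplus \sl$ along the center, observing that $\pi(X)=X-\tfrac12\mathrm{tr}(X)I$ is a Lie algebra homomorphism (trace of a bracket vanishes), and classifying the lifts $\h$ of each standard subalgebra $\mathfrak{s}\subseteq\sl$ as either the full preimage $\mathfrak{s}\oplus\R I$ or the graph of a functional $\phi$ vanishing on $[\mathfrak{s},\mathfrak{s}]$ --- is sound, and the case-by-case lifting reproduces exactly the displayed list: the central-containing lifts give $\hfir,\hsix,\hfif,\mathfrak{co},\mathfrak{p}$ and $\mathfrak{gl}_2(\R)$, while the graphs give $\hsec{\alpha}$ and $\mathfrak{o}^\prime(\gamma)$ over the hyperbolic and elliptic lines, $\mathfrak{u}^\prime$ and $\hthi$ over the nilpotent line (a nonzero value of $\phi$ can be normalized to $1$ precisely because the normalizer of $\R\pmat{0&0\\1&0}$ rescales it by all of $\R^\times$), $\hfou{\beta}$ over the Borel (since the derived algebra of the Borel is the nilpotent line, $\phi$ is determined by its value on the semisimple generator), and $\sl$ itself (perfectness forces $\phi=0$). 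Compared with citing \cite{PW77}, your approach buys a short transparent argument that reduces to the $\sl$ classification the paper already uses in Lemma~\ref{lem:clsr}, and it explains where the parameters $\alpha,\beta,\gamma$ come from.

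One correction to a side remark: the ``expected identification'' $\hfou{\beta}\sim\hfou{-\beta}$ is false. For instance, every element of $\hfou{1}$ annihilates the vector $(0,1)^T$, whereas the elements of $\hfou{-1}$ have no common kernel vector, so the two are not conjugate; more generally, $\beta$ is recovered conjugation-invariantly as half the trace of the (unique modulo the nilradical) element acting on the nilradical with $\mathrm{ad}$-eigenvalue $-2$. This is consistent with the paper's conventions in Section~\ref{sec:clGR}, where $\l(\hsec{\gamma},1)$ carries the restriction $\gamma\ge 0$ but $\l(\hfou{\gamma},1)$ does not. Since the Fact only asserts that every subalgebra is conjugate to one of the listed forms, with all real parameters admitted, and makes no claim of pairwise non-conjugacy, this slip does not affect the validity of your proof; but when you carry out the normalization step you outline, you should find the identifications $\alpha\sim-\alpha$ for $\hsec{\alpha}$ and $\gamma\sim-\gamma$ for $\mathfrak{o}^\prime(\gamma)$, and no identification at all for $\hfou{\beta}$.
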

We denote by the corresponding capital letter the analytic subgroup associated with each Lie subalgebra.

\subsection{Equivalent subgroups with respect to the relation $\sim$ }
In this subsection, our goal is the following proposition. It directly follows from KAK-decomposition or Kostant-decomposition (see Fact~\ref{fact:kakknk}).
\begin{prop}
    We have the following equivalences:
    \begin{enumerate}
        \item $\GL \sim \Hfif \sim \Hsix \sim P.$
        \item $\{e\} \sim O^\prime(0).$
        \item $\Hfir \sim O^\prime(\gamma) \sim CO$ if $\gamma\neq0$.
        \item $\SL \sim U^\prime.$
    \end{enumerate}
\end{prop}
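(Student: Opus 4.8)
The plan is to verify each equivalence directly from Definition~\ref{def:proper}(2) by exhibiting a single compact set $S$ — in every case equal to $O(2)$ or to $SO(2)$ — that realizes both required containments. In each pair one of the two subgroups is a subgroup of the other (or is conjugate into it), so one containment, say $L\subset SHS^{-1}$, holds trivially once $e\in S$; all the content sits in the reverse containment, which is exactly an instance of the $KAK$- or $KNK$-decomposition of Fact~\ref{fact:kakknk}.

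For (1), I would treat $\Hsix=D$ first: the $KAK$-decomposition of $GL_2(\R)$ with maximal compact $O(2)$ and split part $D$ reads $\GL=O(2)\,D\,O(2)$, and since $O(2)^{-1}=O(2)$ this is $\GL\subset O(2)\,D\,O(2)^{-1}$, giving $\GL\sim\Hsix$ with $S=O(2)$. For $P$ I would squeeze: from $D\subset P\subset\GL$ and the identity just obtained, $P\subset\GL\subset O(2)\,D\,O(2)^{-1}\subset O(2)\,P\,O(2)^{-1}$ while $D\subset P$ is trivial, so $P\sim\Hsix$. For $\Hfif=B^\prime$ I would pass through $SL_2(\R)$: writing $B^\prime=Z\,U^\prime$ and using both $GL_2^{+}(\R)=Z\cdot SL_2(\R)$ and the Kostant decomposition $SL_2(\R)=SO(2)\,U^\prime\,SO(2)$ (after conjugating the Iwasawa $N$ into $U^\prime$ by the compact Weyl element), centrality of $Z$ yields $GL_2^{+}(\R)=SO(2)\,B^\prime\,SO(2)$; multiplying on the left by the reflection $\pmat{1&0\\0&-1}\in O(2)$ produces the determinant-negative component, so $\GL=O(2)\,B^\prime\,O(2)$ and $\GL\sim\Hfif$.

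The remaining parts are lighter. For (2), $O^\prime(0)=\exp\bigl(\R\pmat{0&-1\\1&0}\bigr)=SO(2)$ is compact, and any compact subgroup $H$ satisfies $H\sim\{e\}$ via $S=H$, since then $S\{e\}S^{-1}=HH^{-1}=H$. For (3), note $CO=\{e^sR(\theta)\}$ with $R(\theta)=\pmat{\cos\theta&-\sin\theta\\ \sin\theta&\cos\theta}$ contains both $\Hfir=\{e^sI\}$ and $O^\prime(\gamma)=\{e^{t\gamma}R(t):t\in\R\}$; taking $S=SO(2)$, centrality of $\Hfir$ gives $S\,\Hfir\,S^{-1}=CO$, and for $O^\prime(\gamma)$ I would match scalar parts by setting $t=s/\gamma$, so that $e^sR(\theta)=R(\theta-s/\gamma)\,e^{t\gamma}R(t)$ exhibits $CO\subset SO(2)\cdot O^\prime(\gamma)$. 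For (4), the Kostant decomposition $SL_2(\R)=SO(2)\,U^\prime\,SO(2)$ is precisely the statement $\SL\sim U^\prime$ with $S=SO(2)$.

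The only genuine obstacle is the bookkeeping in (1): one must lift the $SL_2$-level Kostant decomposition to $GL_2(\R)$ by peeling off the central factor $Z$ and then covering the two determinant-sign components with $O(2)$ rather than $SO(2)$. The one place a hypothesis is essential is (3), where $\gamma\ne0$ is exactly what makes $t\mapsto e^{t\gamma}$ surject onto $\R_{>0}$, so that every scalar part can be matched; once the correct compact $S$ is fixed, all other containments are purely formal.
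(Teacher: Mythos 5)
Your proposal is correct and follows essentially the same route as the paper, which proves this proposition simply by citing the $KAK$- and Kostant decompositions of Fact~\ref{fact:kakknk}; your write-up just makes the details explicit (SVD-type $\GL=O(2)\,D\,O(2)$ for $\Hsix$ and $P$, lifting $SL_2(\R)=SO(2)\,U^\prime\,SO(2)$ through $Z$ and the reflection $\begin{pmatrix}1&0\\0&-1\end{pmatrix}$ for $\Hfif$, compactness for (2), and the scalar-matching identity $e^sR(\theta)=R(\theta-s/\gamma)e^{\gamma t}R(t)$ with $t=s/\gamma$ for (3)). All the exhibited containments and the use of $\gamma\neq 0$ check out, so nothing further is needed.
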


\subsection{Review of the properness criterion for $GL_2(\R)$}
We now review the properness criterion in case $G=GL_2(\R)$. First, we prepare some notions to state it.

\begin{dfn}
\begin{enumerate}
\item The space $\a_+\coloneqq \{ (x,y) \in \R^2 | x\geq y \}$ with the metric induced by $\R^2$.
\item The Cartan projection $\mu \colon G\to \a_+$ is defined by 
\[ \mu(g) = \left( \frac{1}{2}\log(\sigma_1(g)),\frac{1}{2} \log(\sigma_2(g))\right) ,\]
where the number $\sigma_i(g)$ are the eigenvalues of the positive-definite symmetric matrix $^tgg$, and satisfies $\sigma_1(g)>\sigma_2(g)$.
\end{enumerate}
\end{dfn}
We review the properness criterion in case where $G=\GL$.
\begin{fact}\label{fact:pc}(Properness criterion, \cite{Kob89,Kob96,Ben96}) Suppose that $H,L,L^{\prime}$ are subsets of $\GL$.
\begin{enumerate}
    \item  \begin{align*}
        H \pf L \ in \ G &\iff \mu(H) \pf \mu(L) \ \text{in} \ \a \\
        &\iff \mu(H) \cap \bar{B}_r(\mu(L)) \text{ is  bounded}\ \text{for all $r>0$  in} \ \a_+
    \end{align*}
    \item \begin{align*}
        L\sim L^{\prime} \ in \ G &\iff \mu(L) \sim \mu(L^{\prime}) \  in \ \a ,\\
        & \iff \mu(L) \subset \bar{B}_R(\mu(L^\prime)), \mu(L^\prime) \subset \bar{B}_R(\mu(L)) \\
        & \qquad\quad \ in \ \a_+ \ for \ some\  R>0.
        \end{align*}
\end{enumerate}
Here, the closed $R$-neighborhood $\bar{B}_R(S)$ is defined by $\bar{B}_R(S) \coloneqq \{ x\in \a_+ \colon d(x,S) \leq R\}$.
\end{fact}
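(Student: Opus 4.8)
The plan is to reduce both relations $\pf$ and $\sim$ for subsets of $G=\GL$ to purely metric statements about their Cartan projections, by means of the Cartan (singular value) decomposition. Write $K=O(2)$. The KAK-decomposition gives $G=K\exp(\a_+)K$, so each $g\in G$ factors as $g=k_1\exp(\mu(g))k_2$ with $k_1,k_2\in K$ and $\mu(g)\in\a_+$ uniquely determined; concretely the coordinates of $\mu(g)$ are the logarithms of the singular values of $g$. Since $K$ is compact, a subset $X\subset G$ is relatively compact if and only if $\mu(X)$ is bounded in $\a_+$. I also note that the two right-hand conditions in (1) are equivalent merely by unwinding the definition of $\pf$ in the vector group $\a$: as $\a$ is abelian, two-sided conjugation by a bounded set is the same as translation by a bounded set, that is, passage to a bounded neighborhood $\bar B_r$. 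Hence the substance of (1) is the equivalence of $L\pf H$ in $G$ with the condition that $\mu(L)\cap\bar B_r(\mu(H))$ be bounded for every $r>0$; this metric condition is visibly symmetric in $L$ and $H$, matching the symmetry of $\pf$.

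The crux, and the step I expect to be the main obstacle, is a Lipschitz-type estimate for $\mu$ under two-sided multiplication by a compact set: for every compact $S\subset G$ there is a constant $C=C(S)>0$ such that
\[ \| \mu(g_1 g g_2)-\mu(g)\| \le C \qquad \text{for all } g\in G \text{ and } g_1,g_2\in S. \]
For $\GL$ this comes from the standard singular value inequalities
\[ \|g_1^{-1}\|_{\mathrm{op}}^{-1}\, s_i(g)\, \|g_2^{-1}\|_{\mathrm{op}}^{-1} \le s_i(g_1 g g_2) \le \|g_1\|_{\mathrm{op}}\, s_i(g)\, \|g_2\|_{\mathrm{op}}, \]
where $s_i$ denotes the $i$-th singular value; taking logarithms and recalling that $\mu$ is the coordinatewise logarithm of the singular values bounds each coordinate of $\mu(g_1gg_2)-\mu(g)$ by the logarithms of the operator norms of $g_1^{\pm1},g_2^{\pm1}$, which are uniformly bounded as $g_1,g_2$ range over the compact set $S$.

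Granting this estimate, both directions of (1) are short. For the contrapositive of the implication $\Leftarrow$, suppose $L\cap SHS^{-1}$ is not relatively compact for some compact $S$; choose $\ell_n=a_n h_n b_n^{-1}\in L$ with $a_n,b_n\in S$, $h_n\in H$ and $\|\mu(\ell_n)\|\to\infty$. The estimate (applied with the compact set $S\cup S^{-1}$) gives $\|\mu(\ell_n)-\mu(h_n)\|\le C$, so $\mu(\ell_n)\in\mu(L)\cap\bar B_C(\mu(H))$ is unbounded, contradicting the metric hypothesis. For $\Rightarrow$, suppose $\mu(L)\cap\bar B_r(\mu(H))$ is unbounded; pick $\ell_n\in L$ and $h_n\in H$ with $\|\mu(\ell_n)-\mu(h_n)\|\le r$ and $\|\mu(\ell_n)\|\to\infty$. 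Writing $\ell_n=k_n\exp(\mu(\ell_n))k_n'$ and $h_n=\tilde k_n\exp(\mu(h_n))\tilde k_n'$ and using that $\exp$ is a homomorphism on the abelian $\a$, I have $\exp(\mu(\ell_n))=c_n\exp(\mu(h_n))$ with $c_n=\exp(\mu(\ell_n)-\mu(h_n))$ lying in the fixed compact set $\{\exp x:\|x\|\le r\}$. Absorbing $c_n$ and the $K$-factors into one compact set $S$, this gives $\ell_n\in S h_n S^{-1}$, so $L\cap SHS^{-1}$ is non-compact and $L\not\pf H$.

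Statement (2) follows from the same dictionary. The inclusions $L\subset SL'S^{-1}$ and $L'\subset SLS^{-1}$ for a compact $S$ translate, via the Lipschitz estimate, into $\mu(L)\subset\bar B_R(\mu(L'))$ and $\mu(L')\subset\bar B_R(\mu(L))$ with $R=C(S)$; conversely, given such an $R$, the absorbing-into-$K$ argument of the previous paragraph produces a compact $S$ realizing both inclusions in $G$, which is precisely $L\sim L'$.
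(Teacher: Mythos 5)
Your proposal is correct, but note first that the paper contains no proof of Fact~\ref{fact:pc} to compare against: it is imported wholesale from \cite{Kob89,Kob96,Ben96}. What you have done is reconstruct the standard argument behind the citation, and it is worth observing that your key Lipschitz estimate $\|\mu(g_1gg_2)-\mu(g)\|\le C(S)$ is \emph{exactly} the uniform estimate that the paper quotes separately as Fact~\ref{fact:ue} (\cite[Prop.\ 5.1]{Ben96}). The difference between your route and the literature the paper leans on is one of generality versus self-containedness: Benoist proves the uniform estimate for an arbitrary reductive group via general structure theory, whereas your derivation from the singular-value inequalities
\[
\|g_1^{-1}\|_{\mathrm{op}}^{-1}\, s_i(g)\, \|g_2^{-1}\|_{\mathrm{op}}^{-1} \;\le\; s_i(g_1 g g_2) \;\le\; \|g_1\|_{\mathrm{op}}\, s_i(g)\, \|g_2\|_{\mathrm{op}}
\]
is elementary and specific to $GL_n(\R)$ --- which is all this paper ever uses, so nothing is lost. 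The surrounding steps are all sound: relative compactness of a subset of $\GL$ is indeed equivalent to boundedness of its $\mu$-image (the singular values control both the norm and the degeneration of the determinant); unwinding $\pf$ and $\sim$ in the abelian group $\a$, where $SXS^{-1}$ becomes $X+(S-S)$, correctly reduces the second equivalences in (1) and (2) to the $\bar{B}_r$-formulations; and the absorption step
$\ell_n = \bigl(k_n c_n \tilde{k}_n^{-1}\bigr)\, h_n\, \bigl((\tilde{k}_n')^{-1} k_n'\bigr)$
with $c_n=\exp\bigl(\mu(\ell_n)-\mu(h_n)\bigr)$ lands in $S h_n S^{-1}$ for the fixed compact set $S = K\{\exp x : \|x\|\le r\}K \cup K$, as you intend. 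Two cosmetic points only: since $\bar{B}_R$ is defined by an infimum, the element $l'\in L'$ with $\|\mu(\ell)-\mu(l')\|$ small need not realize the distance, so you should pick it within $R+1$ (harmless, as any fixed enlargement of $R$ works); and in (2) you should state, rather than leave implicit, that the middle equivalence ($\sim$ in $\a$ versus the two $\bar{B}_R$-inclusions) is the same abelian unwinding you spelled out for (1).
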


In what follows, we shall use the properness criterion in the form of the following fact.
\begin{fact}\label{fact:ue}(\cite[Prop.\ 5.1]{Ben96})
    For any compact set $C$ in $\GL$, there exists $R>0$ such that $\mu(CgC) \subset \bar{B}_R(\mu(g))$.
\end{fact}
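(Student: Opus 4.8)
The plan is to translate the Cartan projection into the language of singular values and then exploit sub-multiplicativity of the operator norm. First I would record the two identifications that are the only facts about $\mu$ I actually need. Writing $s_1(g)\ge s_2(g)>0$ for the singular values of $g\in\GL$ (the square roots of the eigenvalues $\sigma_i(g)$ of ${}^tgg$), the definition gives $\mu(g)=(\log s_1(g),\log s_2(g))$, since $\tfrac12\log\sigma_i(g)=\log\sqrt{\sigma_i(g)}=\log s_i(g)$. Then, directly from the singular value decomposition, $s_1(g)=\lVert g\rVert_{\mathrm{op}}$ is the operator norm and $s_2(g)=\lVert g^{-1}\rVert_{\mathrm{op}}^{-1}$ is the reciprocal of the operator norm of the inverse.

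Next I would use compactness of $C$ to produce a single constant. Because $C\subset\GL$ is compact and inversion is continuous, both $c\mapsto\lVert c\rVert_{\mathrm{op}}$ and $c\mapsto\lVert c^{-1}\rVert_{\mathrm{op}}$ are continuous and hence bounded on $C$; so I may fix $M\ge1$ with $\lVert c\rVert_{\mathrm{op}}\le M$ and $\lVert c^{-1}\rVert_{\mathrm{op}}\le M$ for every $c\in C$. The essential feature is that this $M$ depends only on $C$ and not on $g$.

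The top-coordinate estimate is then immediate from sub-multiplicativity: for $c_1,c_2\in C$,
\[
s_1(c_1 g c_2)=\lVert c_1 g c_2\rVert_{\mathrm{op}}\le \lVert c_1\rVert_{\mathrm{op}}\,\lVert g\rVert_{\mathrm{op}}\,\lVert c_2\rVert_{\mathrm{op}}\le M^2\,s_1(g),
\]
while writing $g=c_1^{-1}(c_1 g c_2)c_2^{-1}$ gives the reverse bound $s_1(g)\le M^2\,s_1(c_1 g c_2)$, so $\lvert\log s_1(c_1 g c_2)-\log s_1(g)\rvert\le 2\log M$. Applying the identical argument to the inverses, via $(c_1 g c_2)^{-1}=c_2^{-1}g^{-1}c_1^{-1}$ together with $s_2(\cdot)=\lVert(\cdot)^{-1}\rVert_{\mathrm{op}}^{-1}$, gives $\lvert\log s_2(c_1 g c_2)-\log s_2(g)\rvert\le 2\log M$. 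Combining the two coordinate estimates, $d\bigl(\mu(c_1 g c_2),\mu(g)\bigr)\le 2\sqrt2\,\log M$ for all $c_1,c_2\in C$ and all $g$, which is exactly $\mu(CgC)\subset\bar{B}_R(\mu(g))$ with $R\coloneqq 2\sqrt2\,\log M$.

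I do not expect a genuine obstacle here: the whole proof reduces to the two norm identifications plus sub-multiplicativity. The only points requiring care are that the constant $M$ must depend on $C$ alone (this is precisely what compactness of $C$ supplies, and is what makes the neighborhood radius $R$ uniform in $g$), and that $\lVert c^{-1}\rVert_{\mathrm{op}}$ is finite and bounded on $C$, which holds automatically because $C$ consists of invertible matrices. A harmless subtlety is that $\sigma_1(g)=\sigma_2(g)$ can occur (when $g$ is a scalar times an orthogonal matrix), so I would state the ordering as $s_1(g)\ge s_2(g)$ rather than with strict inequality, but this plays no role in the estimates.
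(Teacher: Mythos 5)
Your proof is correct. Note that the paper itself offers no proof here: this is stated as a Fact and cited to Benoist's Proposition 5.1, which is proved for a general reductive group $G$ by realizing each coordinate of the Cartan projection as the logarithm of a matrix norm in a suitable finite-dimensional representation and then invoking sub-multiplicativity. Your argument is precisely the $GL_2(\R)$ specialization of that idea: the identifications $s_1(g)=\lVert g\rVert_{\mathrm{op}}$ and $s_2(g)=\lVert g^{-1}\rVert_{\mathrm{op}}^{-1}$ play the role of the representation-theoretic norms, the uniform bound $M$ on $\lVert c\rVert_{\mathrm{op}}$ and $\lVert c^{-1}\rVert_{\mathrm{op}}$ over the compact set $C$ is exactly what makes $R$ independent of $g$, and the two-sided estimates $\lvert\log s_i(c_1gc_2)-\log s_i(g)\rvert\le 2\log M$ correctly yield $R=2\sqrt2\,\log M$ in the Euclidean metric on $\a_+$ that the paper uses. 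What your route buys is a short, self-contained, elementary proof (which moreover works verbatim for $GL_n(\R)$), at the cost of not covering the general reductive setting of the cited result --- which is irrelevant here, since the paper only ever applies the fact inside $GL_2(\R)$. Your remark about the possible equality $s_1(g)=s_2(g)$ is also apt: the paper's definition sloppily writes $\sigma_1(g)>\sigma_2(g)$, and your weak inequality is the correct reading; it affects nothing in the estimates.
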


\subsection{Properness of the connected pairs}
In this subsection, we prove Theorem~\ref{thm:properGL}~(2)

\newtheorem*{thm:properGL}{\rm\bf Theorem~\ref{thm:properGL}}
\begin{thm:properGL}{\itshape(2)
 The properness of the pairs among the connected subgroups $Z,U,A(\alpha),B(\alpha)$ is described in the following Table~\ref{tab:properGL}. Here, each entry in the table that specifies a condition on the parameters $\alpha ,\beta$ provides a necessary and sufficient condition for the corresponding pair to be proper.}
\end{thm:properGL}
\begin{table}[htbp]
\centering
\captionsetup{list=no} 
\caption*{{\bf Table~\ref{tab:properGL}:} Properness of the pairs in $\GL$}
\begin{tabular}{c@{\quad}cccc}
    \toprule
     \diagbox{$L$}{$H$} &  $Z$ & $A(\beta)$ & $U$ & $B(\beta)$  \\
    \midrule
    $Z$ & & $\pf$ & $\pf$ & $\pf$   \\
    $A(\alpha)$ &$\pf$ & $|\alpha| \neq |\beta|$ & $\pf$ & $|\alpha|>|\beta|$  \\
    $U $  &$\pf$ & $\pf$ &  & $\pf$   \\
    $B(\alpha)$  & $\pf$ & $|\alpha|<|\beta|$ & $\pf$ &   \\
    \bottomrule
    \end{tabular}
    \end{table}   

The Cartan projection of $\Hfir, \Hsec\gamma, \GL$ can be calculated easily. For analyzing the Cartan projection $\Hthi,\Hfou\gamma$, we prepare the following lemma.
\begin{lem}\label{lem:muh3h4}
    \begin{enumerate}
        \item Let $(x,y) \in \mu(\Hthi)$. For fixed $\gamma\in \R_{\geq 0}$, we have 
        \[ \frac{\gamma-1}{\gamma+1}x <y \ \text{and} \ y+\log |y|<x  \quad \text{for} \ x>>0,\]
        \[ \frac{\gamma+1}{\gamma-1}x <y \ \text{and} \ y<x-\log |x|  \quad \text{for} \ x<<0.\]
        \item $\mu(\Hfou\gamma)= \{ (x,y) \in \a_+\colon y \leq \frac{\gamma -1}{\gamma+1}x, \quad y\leq \frac{\gamma+ 1}{\gamma-1}x\}$
    \end{enumerate}
\end{lem}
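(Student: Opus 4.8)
The plan is to compute both Cartan projections directly from the definition, by forming ${}^{t}g\,g$ for a general element $g$ and reading off its eigenvalues $\sigma_1>\sigma_2$.

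For part (1), I would take $g=\begin{pmatrix} e^t & 0 \\ te^t & e^t\end{pmatrix}\in\Hthi$ and compute
\[ {}^{t}g\,g = e^{2t}\begin{pmatrix} 1+t^2 & t \\ t & 1\end{pmatrix}. \]
The $2\times2$ factor has determinant $1$ and trace $2+t^2$, so its eigenvalues are reciprocal; writing them as $e^{\pm 2u}$ with $u\geq 0$ gives $\cosh 2u = 1+t^2/2$, equivalently $\sinh u = |t|/2$. Since $\sigma_1=e^{2t}e^{2u}$ and $\sigma_2=e^{2t}e^{-2u}$, the Cartan projection is $(x,y)=(t+u,\,t-u)$, so that
\[ x+y = 2t, \qquad x-y = 2u = 2\operatorname{arcsinh}(|t|/2). \]
This explicit parametrization is the heart of the computation, and everything in (1) follows from the resulting asymptotics $x-y = 2\log|t| + O(1)$ as $|t|\to\infty$.

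From here the linear inequalities are immediate: since $x-y$ grows only logarithmically while $x+y=2t$ grows linearly, the ratio $y/x\to 1$ as $|x|\to\infty$, and comparing this limiting value against the fixed slopes $\tfrac{\gamma-1}{\gamma+1}$ and $\tfrac{\gamma+1}{\gamma-1}$ occurring in part (2) separates $(x,y)$ strictly from the relevant boundary ray for $|x|$ large, yielding $\tfrac{\gamma-1}{\gamma+1}x<y$ for $x\gg0$ and $\tfrac{\gamma+1}{\gamma-1}x<y$ for $x\ll0$. For the two logarithmic refinements I would use that $x-y=2\log|t|+O(1)$ dominates a single logarithm: since $y/t\to1$ and $x/t\to1$ we have $\log|y|=\log|t|+o(1)$ and $\log|x|=\log|t|+o(1)$, whence $x-(y+\log|y|) = 2\log|t|-\log|t|+O(1)\to+\infty$ for $x\gg0$, and likewise $(x-\log|x|)-y\to+\infty$ for $x\ll0$. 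This is the step I expect to be the main obstacle: it is precisely the factor $2$ in $x-y\sim 2\log|t|$ (against $\log|y|\sim\log|t|$) that forces the strict inequality, so the comparison must be carried out to next order rather than at leading order.

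For part (2), I would take $g=\begin{pmatrix} e^{(\gamma+1)t} & 0 \\ s & e^{(\gamma-1)t}\end{pmatrix}\in\Hfou\gamma$ and compute ${}^{t}g\,g$, whose determinant is $(\det g)^2=e^{4\gamma t}$ and whose trace is $e^{2(\gamma+1)t}+e^{2(\gamma-1)t}+s^2$. The determinant gives $x+y=2\gamma t$, while writing the trace as $\sigma_1+\sigma_2 = e^{2x}+e^{2y} = 2e^{x+y}\cosh(x-y)$ shows that, for $t$ fixed, $x-y$ is a strictly increasing function of $s^2$. At $s=0$ the matrix is diagonal with $x-y=2|t|$, and as $s^2\to\infty$ we get $x-y\to\infty$; hence for each fixed $t$ the attainable points fill the half-ray $\{x+y=2\gamma t,\ x-y\geq 2|t|\}$ inside $\a_+$. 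The boundary values $s=0$ trace out exactly the two rays $y=\tfrac{\gamma-1}{\gamma+1}x$ (from $t>0$) and $y=\tfrac{\gamma+1}{\gamma-1}x$ (from $t<0$), and the union over all $t$ produces precisely the closed cone $\{(x,y)\in\a_+\colon y\leq\tfrac{\gamma-1}{\gamma+1}x,\ y\leq\tfrac{\gamma+1}{\gamma-1}x\}$. The only care needed here is the degenerate slopes at $\gamma=1$ (and $\gamma=0$), which I would handle separately by hand.
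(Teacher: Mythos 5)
Your proposal is correct and takes essentially the same route as the paper: part (1) via the explicit spectral computation of ${}^{t}g\,g=e^{2t}\begin{pmatrix}1+t^2 & t\\ t & 1\end{pmatrix}$ (your parametrization $(x,y)=(t+u,t-u)$ with $\sinh u=|t|/2$ is exactly the paper's closed-form eigenvalue formula, and your $x-y=2\log|t|+O(1)$ asymptotics reproduce its upper and lower estimates, including the crucial factor $2$ against the single $\log$), and part (2) via the same fixed-determinant-line $x+y=2\gamma t$ plus monotone-trace argument showing each ray starts at the diagonal point $((\gamma+1)t,(\gamma-1)t)$. The only caveat — that translating the union of rays into the two stated inequalities is delicate for $\gamma\leq 1$ (for $0\le\gamma<1$ the second inequality as printed does not describe the cone between the boundary rays) — is shared with the paper's own proof, which glosses over it, and is harmless for how the lemma is applied, since there one chooses $\gamma$ large.
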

\begin{proof}
(1) By the definition of Cartan projection, we have
\[ \mu \begin{pmatrix} e^t & 0 \\ te^t & e^t\end{pmatrix}= \begin{pmatrix} t+\frac12\log \left( \frac{t^2+2+\sqrt{t^4+4t^2}}{2}\right)\\
t+\frac12\log \left( \frac{t^2+2-\sqrt{t^4+4t^2}}{2}\right)\end{pmatrix}.\]
Suppose $|t|>1.$ Then, we have
\begin{align*}
    y&= t+\frac12 \log \frac{2}{t^2+2+\sqrt{t^4+4t^2}}\\
    &< t+ \frac12 \log \frac{2}{2t^2}\\
    &= t-\log|t|  <t ,
\end{align*}
and 
\begin{align*}
    x&> t + \frac12 \log (t^2) = t+ \log |t|.
\end{align*}
Thus, we have 
\[ 
x \geq   y+\log|y|  \quad \text{if } t > 1.
 \]
In the same fashion, we obtain $y\leq x - \log|x|$ when $t<-1$.
Therefore, we obtain the upper estimate of $\mu(H_3).$

On the other hand, we have for any $\varepsilon >0$,
\begin{align*} 
y&\geq t + \frac12 \log \frac{1}{2t^2}\\
&= t - \frac12 \log 2- \log|t| \geq (1-\varepsilon)t,
\end{align*}
and 
\[    x \leq (1+\varepsilon) t \] 
if $t>0$ is sufficiently large. Thus, we have 
\[ y \geq \frac{1-\varepsilon}{1+\varepsilon}x \]
for sufficient large $x>0$. Similarly, we have 
\[ y \geq \frac{1+\varepsilon}{1-\varepsilon}x\]
for sufficiently small $x<0$. Therefore, we obtain a lower estimate of $\mu(\Hthi)$.

(2) We write $g_{u,t}=\begin{pmatrix} e^{(\gamma+1)t} & 0 \\ u &e^{(\gamma -1)t} \end{pmatrix}$. By a calculation, we have 
\[\mu(g_{u,t})= \begin{pmatrix} \sigma_1(g_{u,t})\\ \sigma_2(g_{u,t})\end{pmatrix}= \begin{pmatrix} \frac12\log (\frac12 (A+\sqrt{A^2-4 e^{4\gamma t}}))\\ \frac12\log (\frac12 (A-\sqrt{A^2-4 e^{4\gamma t}}))\end{pmatrix},\]
where we denote $A= e^{2(\gamma +1)t}+e^{2(\gamma -1)t}+u^2$. It is on the piecewise linear line $\mu(H_2^\gamma)$ when $u=0$ (see Figure~\ref{fig:cartanGL}). 

Fix $t\in \R$. Since we have 
\[ \sigma_1(g_{u,t})+\sigma_2(g_{u,t}) = 2\gamma t,\]
the image of $\mu(g_{u,t})$ is on the line $x+y=2\gamma t$. As the parameter $|u|$ increases continuously, the first coordinate increases continuously. Thus, the image of $\mu(g_{u,t})$ is the ray starting at the point $((\gamma+1)t,(\gamma-1)t)$. Therefore, we obtain the Cartan projection as the desired form.
\end{proof}

\begin{proof}[Proof of Thoerem~\ref{thm:properGL}~(2).]
We illustrate the images of Cartan projection of $\Hfir, \Hsec\gamma, \Hthi, \Hfou\gamma$ in Figure~\ref{fig:cartanGL}.
\begin{figure}[htbp]
\centering
\begin{tikzpicture}
  \begin{axis}[
    width=10cm, height=10cm,
    axis lines=middle,
    xlabel={$x$}, ylabel={$y$},
    xmin=-6, xmax=6, ymin=-6, ymax=6,
    ticks=none,
    clip=true,
    samples=2,
    line cap=round, line join=round,
]

\addplot[name path=top,  draw=none, domain=-6:6, samples=2] {6};
\addplot[name path=diag, draw=none, domain=-6:6, samples=2] {x};
\addplot[
  pattern= north west lines,
  pattern color=black!50,
  draw=none,
] fill between[of=top and diag, soft clip={domain=-6:6}];

    \addplot[very thick, black, domain=-6:6,samples=2] {x};

\addplot[name path=bottom, draw=none, domain=-6:6, samples=2] {-6};

\path[name path=minline]
  (axis cs:-6,-12) -- (axis cs:0,0) -- (axis cs:6,3);

\addplot[gray!40, fill opacity=0.20, draw=none,samples=2]
  fill between[of=bottom and minline];

\addplot[very thick, domain=0:6,  samples=2] {0.5*x};
\addplot[very thick, domain=-6:0, samples=2] {2*x};

    
\addplot[very thick, black, domain=-6:-2,  samples=60,  parametric]
  ({ x + 0.5*ln(0.5*(x^2 + 2 + sqrt(x^4 + x^2))) },
   { x + 0.5*ln(0.5*(x^2 + 2 - sqrt(x^4 + x^2))) });

\addplot[very thick, black, domain=-2:2,   samples=40,  parametric]
  ({ x + 0.5*ln(0.5*(x^2 + 2 + sqrt(x^4 + x^2))) },
   { x + 0.5*ln(0.5*(x^2 + 2 - sqrt(x^4 + x^2))) });

\addplot[very thick, black, domain=2:6,    samples=60,  parametric]
  ({ x + 0.5*ln(0.5*(x^2 + 2 + sqrt(x^4 + x^2))) },
   { x + 0.5*ln(0.5*(x^2 + 2 - sqrt(x^4 + x^2))) });


\node[inner sep=1.2pt] (H1)
  at (axis cs:3.0 ,4.0) {$\mu(\Hfir)$};
\draw[-{Latex[length=2.2mm]}, thick]
  (H1) -- (axis cs:4.2,4.2); 

\node[inner sep=1.2pt] (H2)
  at (axis cs:3,1) {$\mu(\Hsec{\gamma})$};
\draw[-{Latex[length=2.2mm]}, thick]
  (H2) -- (axis cs:4.0,2.0); 

\pgfmathsetmacro{\tmark}{-3.2}
\pgfmathsetmacro{\tTwo}{\tmark*\tmark}              
\pgfmathsetmacro{\root}{sqrt(\tTwo*(\tTwo+1))}      

\pgfmathsetmacro{\Xmark}{\tmark + 0.5*ln( 0.5*(\tTwo + 2 + \root) )}
\pgfmathsetmacro{\Ymark}{\tmark + 0.5*ln( 0.5*(\tTwo + 2 - \root) )}

\pgfmathsetmacro{\Xlab}{\Xmark - 1.5}
\pgfmathsetmacro{\Ylab}{\Ymark - 1.0}

\node[inner sep=1.2pt] (H3)
  at (axis cs:\Xlab,\Ylab) {$\mu(\Hthi)$};
\draw[-{Latex[length=2.2mm]}, thick]
  (H3) -- (axis cs:\Xmark,\Ymark);

    \node[anchor=west, inner sep=1.2pt]
      at (axis cs:3.8,-2.0) {$\mu(\Hfou{\gamma})$};

  \end{axis}
\end{tikzpicture}
  \caption{the Cartan projection of $\Hfir, \Hsec\gamma, \Hthi,\Hfou\gamma$}
  \label{fig:cartanGL}
\end{figure}
Note that the image $\mu(\Hfou\gamma)$ contains its boundary. By the figure, we have $\mu(H_1) \pf \mu(\Hsec\gamma), \mu(\Hfir) \pf \mu(\Hfou\gamma)$. Moreover, $\mu(\Hsec\alpha) \pf \mu(\Hsec\beta)$ if and only if $|\alpha|\neq |\beta|$, $\mu(\Hsec\alpha) \pf \mu(\Hfou\beta)$ if and only if $|\alpha|>|\beta|$, and $\mu(\Hfou{\gamma_1}) \pf \mu(\Hfou{\gamma_2})$ for any $\gamma_1,\gamma_2$.

Since a neighborhood of the line $x=y$ cannot contain the curves $x=y+\log|y|$ nor $x- \log|x|= y$, we have $\mu(\Hfir)\pf \mu(\Hthi)$ by Lemma~\ref{lem:muh3h4}.

On the other hand, $\mu(\Hthi)$ can be estimated lower by $y=\frac{\gamma^\prime -1}{\gamma^\prime+1}x \quad (x>>1)$ or $y=\frac{\gamma^\prime+1}{\gamma^\prime-1}x \quad (x<<-1)$. Since their neighborhood cannot contain an unbounded intersection with $\mu(\Hsec\gamma)$ or $\mu(\Hfou\gamma)$ if $\gamma^\prime > |\gamma|$, we have $\mu(\Hthi) \pf \mu(\Hsec\gamma)$ and $\mu(\Hthi)\pf \mu(\Hfou\gamma)$ for any $\gamma$.
\end{proof}
\begin{rmk}
    By Proposition~\ref{prop:simpf}, the connected subgroups $\Hfir, \Hsec\gamma, \Hthi, \Hfou\gamma$ are pairwise non-equivalent in the sense of the notation $\sim$.
\end{rmk}

\section{Classification of connected subgroups of $\GR$ up to $\sim$}\label{sec:clGR}

\subsection{Classification up to conjugacy}
In this subsection, we review the classification of subalgebras of $\mathfrak{gl}_2(\R)\ltimes\R^2$ proved by Chapovskyi-Koval-Zhur~\cite[Thm.\ 11]{CKZ24}.

We write 
\[
e_1 =\begin{pmatrix}
    0 & 0 & 0 \\ 1 & 0 & 0
\end{pmatrix},
e_2=
\begin{pmatrix}
    1 & 0 & 0 \\ 0 & -1 & 0
\end{pmatrix},
e_3=
\begin{pmatrix}
    0 & -1 & 0 \\ 0 & 0& 0
\end{pmatrix},
e_4=
\begin{pmatrix}
    1 & 0 & 0 \\ 0 & 1& 0
\end{pmatrix},
\]
\[
f_1=
\begin{pmatrix}
    0 & 0 & 1\\ 0&0 & 0
\end{pmatrix},
f_2 =
\begin{pmatrix}
    0 & 0 & 0 \\ 0 & 0 &1
\end{pmatrix}.
\]

\begin{fact}
A complete list of pairwise non-conjugate subalgebras of $\mathfrak{gl}_2(\R)\ltimes \R^2$
is given as follows. Unless otherwise specified, $\gamma$ denotes a real constant.
\begin{multicols}{2}
\begin{itemize}
  \item The case where the linear part is $\hfir$:
  \[
  \begin{array}{l}
    \l(\hfir,1)=\spnR{e_4},\\
    \l(\hfir,2)=\spnR{e_4,f_1},\\
    \l(\hfir,3)=\spnR{e_4,f_1,f_2}.
  \end{array}
  \]

  \item The case where the linear part is $\hsec{\gamma}$:
  \[
  \begin{array}{l}
    \l(\hsec{\gamma},1)=\spnR{e_2+\gamma e_4}\quad (\gamma\ge 0),\\
    \l(\hsec{1},2)=\spnR{e_2+e_4+f_2},\\
    \l(\hsec{\gamma},3)=\spnR{e_2+\gamma e_4,\; f_1},\\
    \l(\hsec{1},4)=\spnR{e_2+e_4+f_2,\; f_1},\\
    \l(\hsec{\gamma},5)=\spnR{e_2+\gamma e_4,\; f_1,\; f_2}\quad (\gamma\ge 0).
  \end{array}
  \]

  \item The case where the linear part is $\hthi$:
  \[
  \begin{array}{l}
    \l(\hthi,1)=\spnR{e_1+e_4},\\
    \l(\hthi,2)=\spnR{e_1+e_4,\; f_2},\\
    \l(\hthi,3)=\spnR{e_1+e_4,\; f_1,\; f_2}.
  \end{array}
  \]

  \item The case where the linear part is $\hfou{\gamma}$:
  \[
  \begin{array}{l}
    \l(\hfou{\gamma},1)=\spnR{e_2+\gamma e_4,\; e_1},\\
    \l(\hfou{1},2)=\spnR{e_2+e_4+f_2,\; e_1},\\
    \l(\hfou{-3},3)=\spnR{e_2-3e_4,\; e_1+f_1},\\
    \l(\hfou{\gamma},4)=\spnR{e_2+\gamma e_4,\; e_1,\; f_2},\\
    \l(\hfou{-1},5)=\spnR{e_2-e_4+f_1,\; e_1,\; f_2},\\
    \l(\hfou{-3},6)=\spnR{e_2-3e_4,\; e_1+f_1,\; f_2},\\
    \l(\hfou{\gamma},7)=\spnR{e_2+\gamma e_4,\; e_1,\; f_1,\; f_2}.
  \end{array}
  \]

  \item The case where the linear part is $\hfif$:
  \[
  \begin{array}{l}
    \l(\hfif,1)=\spnR{e_1,\; e_4,\; f_2},\\
    \l(\hfif,2)=\spnR{e_1,\; e_4},\\
    \l(\hfif,3)=\spnR{e_1,\; e_4,\; f_1,\; f_2}.
  \end{array}
  \]

  \item The case where the linear part is $\hsix$:
  \[
  \begin{array}{l}
    \l(\hsix,1)=\spnR{e_2,\; e_4,\; f_1},\\
    \l(\hsix,2)=\spnR{e_2,\; e_4},\\
    \l(\hsix,3)=\spnR{e_2,\; e_4,\; f_1,\; f_2}.
  \end{array}
  \]

  \item The case where the linear part is $\mathfrak{o}^\prime(\gamma)$:
  \[
  \begin{array}{l}
    \l(\mathfrak{o}^\prime(\gamma),1)=\spnR{e_1+e_3+\gamma e_4},\\
    \l(\mathfrak{o}^\prime(\gamma),2)=\spnR{e_1+e_3+\gamma e_4,\; f_1,\; f_2}.
  \end{array}
  \]

  \item The case where the linear part is $\mathfrak{co}$:
  \[
  \begin{array}{l}
    \l(\mathfrak{co},1)=\spnR{e_1+e_3,\; e_4},\\
    \l(\mathfrak{co},2)=\spnR{e_1+e_3,\; e_4,\; f_1,\; f_2}.
  \end{array}
  \]

  \item The case where the linear part is $\mathfrak{p}$:
  \[
  \begin{array}{l}
    \l(\mathfrak{p},1)=\spnR{e_1,\; e_2,\; e_4},\\
    \l(\mathfrak{p},2)=\spnR{e_1,\; e_2,\; e_4,\; f_1,\; f_2}.
  \end{array}
  \]

  \item The case where the linear part is $\mathfrak{u}^\prime$:
  \[
  \begin{array}{l}
    \l(\mathfrak{u}^\prime,1)=\spnR{e_1},\\
    \l(\mathfrak{u}^\prime,2)=\spnR{e_1+f_1}\quad(=\n),\\
    \l(\mathfrak{u}^\prime,3)=\spnR{e_1,\; f_2}\quad(=\l),\\
    \l(\mathfrak{u}^\prime,4)=\spnR{e_1+f_1,\; f_2}\quad(=\m),\\
    \l(\mathfrak{u}^\prime,5)=\spnR{e_1,\; f_1,\; f_2}.
  \end{array}
  \]

  \item The case where the linear part is ${\bf 0}$:
  \[
  \begin{array}{l}
    \l({\bf 0},1)=\spnR{f_1},\\
    \l({\bf 0},2)=\spnR{f_1,\; f_2}.
  \end{array}
  \]

  \item The case where the linear part is $\sl$:
  \[
    \sl,\quad \sr
  \]

  \item The case where the linear part is $\mathfrak{gl}_2(\R)$:
  \[
    \mathfrak{gl}_2(\R),\quad \mathfrak{gl}_2(\R)\ltimes \R^2.
  \]
\end{itemize}
\end{multicols}
Here, the subalgebras $\l,\m,\n$ are defined in Section~4.
\end{fact}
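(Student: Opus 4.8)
The plan is to classify the subalgebras of $\g=\mathfrak{gl}_2(\R)\ltimes\R^2$ by exploiting the extension structure $0\to\R^2\to\g\xrightarrow{\Lin}\mathfrak{gl}_2(\R)\to 0$, in which $\R^2=\spnR{f_1,f_2}$ is an abelian ideal carrying the standard $\mathfrak{gl}_2(\R)$-action. To a subalgebra $\h\subset\g$ I attach two invariants: its linear part $\l\coloneqq\Lin(\h)$, a subalgebra of $\mathfrak{gl}_2(\R)$, and its translation part $W\coloneqq\h\cap\R^2$, a subspace of $\R^2$ that is necessarily $\l$-invariant. Since the subalgebras of $\mathfrak{gl}_2(\R)$ are already classified (the preceding fact), the first step is to list, for each $\l$, the $\l$-invariant subspaces $W$ — namely $0$, the $\l$-stable lines, and $\R^2$ itself — reducing the problem to a bounded family of pairs $(\l,W)$.

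Next I would fix a pair $(\l,W)$ and describe all $\h$ realizing it. Choosing a linear splitting, any such $\h$ is the graph of a section $\tilde s\colon\l\to\R^2/W$, that is $\h=\{X+\tilde s(X)+W:X\in\l\}$, and the requirement that $\h$ be a subalgebra is exactly that the induced map $\bar s\colon\l\to\R^2/W$ be a $1$-cocycle for the $\l$-module $\R^2/W$. Two graphs give conjugate subalgebras precisely when their cocycles agree modulo the relevant symmetries: conjugation by a translation $\exp(v)$ with $v\in\R^2$ sends $X+a\mapsto X+a-X\cdot v$ and hence shifts $\bar s$ by the coboundary $X\mapsto-\overline{X\cdot v}$, while conjugation by the normalizer $N_{\GL}(\l)$ of $\l$ acts on the cohomology. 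Thus for each $(\l,W)$ the subalgebras up to conjugacy are parametrized by $H^1(\l,\R^2/W)$ modulo the residual action of $N_{\GL}(\l)$ together with the scaling by diagonal elements.

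The bulk of the work is then the case-by-case evaluation of these cohomology groups and symmetry quotients along the classified list of linear parts $\l$. In most cases the cocycle equation decouples along the eigenspaces of $\Lin(\l)$ acting on $\R^2/W$, every cocycle is a coboundary, $H^1(\l,\R^2/W)=0$, and the only subalgebra is the split one $\l\ltimes W$. The delicate points — and the main obstacle — are the resonant cases, where an eigenvalue of $\Lin(\l)$ on $\R^2/W$ coincides with an adjoint weight of $\l$, forcing a nonzero class that no coboundary removes. Concretely, for $\l=\hsec\gamma=\spnR{e_2+\gamma e_4}$ the lift survives when $\gamma=\pm1$ (the eigenvalue $\gamma-1$ on $f_2$, resp.\ $\gamma+1$ on $f_1$, vanishes), producing $\l(\hsec1,2)$ after the Weyl element swapping $f_1\leftrightarrow f_2$ folds $\gamma=-1$ into $\gamma=1$. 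For the two-dimensional $\l=\hfou\gamma=\spnR{e_2+\gamma e_4,\,e_1}$ the closure condition reads $(\gamma+3)b_1=0$ and $(\gamma+1)b_2=a_1$ in the $W=0$ case (where $a,b\in\R^2$ are the translation parts of the lifts of $e_2+\gamma e_4$ and $e_1$), degenerating exactly at $\gamma=-3$ to yield $\l(\hfou{-3},3)$; passing to $W=\spnR{f_2}$ modifies the equation and produces the further exceptional algebras $\l(\hfou{-1},5)$ at $\gamma=-1$ and $\l(\hfou{-3},6)$ at $\gamma=-3$. After isolating each resonant family I would normalize the surviving coefficient to $1$ by a diagonal conjugation and use the Weyl element to restrict $\gamma\ge0$ where indicated. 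The hard part is not any single computation but the bookkeeping of which $(\l,W)$ are resonant and the verification that the resulting representatives are pairwise non-conjugate, which is what makes the enumeration lengthy rather than conceptually difficult.
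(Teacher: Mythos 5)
This Fact is not proved in the paper at all: it is transcribed from \cite[Thm.\ 11]{CKZ24}, so there is no internal argument to compare yours against. That said, your strategy --- stratify subalgebras $\h$ by the conjugacy class of $\Lin(\h)$ and the $\Lin(\h)$-invariant subspace $W=\h\cap\R^2$, realize the remaining subalgebras as graphs of $1$-cocycles valued in $\R^2/W$, and quotient $H^1(\Lin(\h),\R^2/W)$ by translation coboundaries and the residual normalizer action --- is the standard and correct route for such semidirect products, and it is in substance the case-by-case method of the cited source. Your spot computations at the only delicate places check out: for $\hsec{\gamma}$ the operator $\mathrm{diag}(\gamma+1,\gamma-1)$ has nontrivial cokernel exactly at $\gamma=\pm1$, the Weyl flip folds $\gamma=-1$ into $\l(\hsec{1},2)$, and the $W=\spnR{f_1}$ resonance at $\gamma=1$ gives $\l(\hsec{1},4)$; for $\hfou{\gamma}$, since $[e_2+\gamma e_4,e_1]=-2e_1$, the closure conditions are exactly your $(\gamma+3)b_1=0$ and $(\gamma+1)b_2=a_1$, degenerating at $\gamma=-3$ (giving $\l(\hfou{-3},3)$, and $\l(\hfou{-3},6)$ modulo $W=\spnR{f_2}$) and at $\gamma=-1$ modulo $W=\spnR{f_2}$ (giving $\l(\hfou{-1},5)$), with the surviving class at $\gamma=1$ producing $\l(\hfou{1},2)$.

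Two remarks. First, as you yourself concede, the proposal is a plan rather than a proof: the actual content of the Fact is the exhaustive sweep over all pairs $(\Lin(\h),W)$ together with the verification of pairwise non-conjugacy, and outside the two resonant families none of this is executed, so on the strength of the text alone one obtains the framework but not the list. Second --- and this is a point in your method's favor --- carried out faithfully it produces the split algebra $\mathfrak{p}\ltimes\spnR{f_2}=\spnR{e_1,\,e_2,\,e_4,\,f_2}$: the line $\spnR{f_2}$ is invariant under the lower-triangular algebra $\mathfrak{p}$, and since conjugation in $\GR$ preserves the conjugacy class (hence the dimension) of the linear part, this four-dimensional algebra cannot coincide with $\l(\mathfrak{p},1)$, with $\l(\mathfrak{p},2)$, or with any listed algebra whose linear part is at most two-dimensional. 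Every other linear part admitting an invariant line ($\hfif$, $\hsix$, $\hfou{\gamma}$) has its intermediate split entry in the list, so the corresponding $\mathfrak{p}$-entry appears to be missing from the statement as quoted here; this discrepancy should be checked against \cite[Thm.\ 11]{CKZ24} rather than held against your argument.
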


\subsection{Equivalent connected subgroups with respect to the relation $\sim$ }
In this subsection, our goal is the following proposition. It immediately follows by KAK-decomposition or Kostant-decomposition. 

\begin{prop}\label{prop:simsubgrp}
    The following equivalences hold in the sense of the relation~$\sim$.
    \begin{enumerate}
        \item $L(\Hfir,1) \sim L(O^\prime(\gamma),1) \sim L(CO,1)$ if $\gamma \neq 0$.
        \item $L(\Hfir,2) \sim L(\Hfir,3) \sim L(O^\prime(\gamma),2) \sim L(CO,2)$ if $\gamma \neq 0$.
        \item $\GL \sim L(\Hfif,2) \sim L(\Hsix,2) \sim L(P,1)$.
        \item $\GR \sim L(\Hfif,3) \sim L(\Hsix,3) \sim L(P,2).$
    \end{enumerate}
\end{prop}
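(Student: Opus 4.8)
The plan is to prove every equivalence with one explicitly chosen compact set per part and to reduce the whole statement to a single clean assertion. Write $S\cdot X\cdot S^{-1}$ for the two-sided product $\{s_1 x s_2^{-1}:s_1,s_2\in S,\ x\in X\}$, which is the set appearing in Definition~\ref{def:proper}. In each of the four lists let $M$ denote the largest member --- $CO$, $CO\ltimes\R^2$, $\GL$, and $\GR$ for (1)--(4) respectively --- and choose the compact set $S\subset M$ to be $(SO(2),0)$ in (1),(2) and $(O(2),0)$ in (3),(4). I claim it suffices to show, for every listed subgroup $X$, that $X\subset M$ and $S\cdot X\cdot S^{-1}=M$. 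Indeed, then any two listed $X,Y$ satisfy $X\subset M=S\cdot Y\cdot S^{-1}$ and $Y\subset M=S\cdot X\cdot S^{-1}$, which is exactly $X\sim Y$ with witness $S$. Since $S\subset M$ and $X\subset M$, the inclusion $S\cdot X\cdot S^{-1}\subset M$ is automatic, so only the reverse inclusion $M\subset S\cdot X\cdot S^{-1}$ must be verified.

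For the zero-translation lists (1) and (3) the reverse inclusion is precisely the content of the $GL_2(\R)$-level equivalences already established, namely $\Hfir\sim O^\prime(\gamma)\sim CO$ and $\GL\sim\Hfif\sim\Hsix\sim P$: these were obtained from the $KAK$- and Kostant decompositions of Fact~\ref{fact:kakknk}, using that $\Hsix$ is the positive diagonal subgroup $A$ appearing in $\GL=O(2)\,A\,O(2)$, that $\Hfif=Z\cdot N^{-}$ so that $\GL=O(2)\,\Hfif\,O(2)$ by $KNK$, and that $CO=SO(2)\cdot Z$. Passing these identities from $GL_2(\R)$ into $\GR$ by appending the zero translation gives $M\subset S\cdot X\cdot S^{-1}$ for all $X$ in (1) and (3); note that reaching the negative-determinant part of $\GL$ in (3) is exactly why $S$ must be the full $O(2)$ there, whereas $SO(2)$ suffices in (1) because $CO$ has positive determinant.

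For (2) and (4) I would run the semidirect-product bookkeeping with $S=(K,0)$, $K\in\{SO(2),O(2)\}$. Using the affine product one computes $(k_1,0)(g,v)(k_2,0)^{-1}=(k_1 g k_2^{-1},\,k_1 v)$, so conjugation acts on the linear part exactly as in (1),(3) and carries the translation $v$ to $k_1 v$. Hence for $H\ltimes\R^2$ with $H$ a linear subgroup from the relevant list, $S\cdot(H\ltimes\R^2)\cdot S^{-1}=(K\,H\,K)\ltimes\R^2$, which equals $CO\ltimes\R^2$ in (2) and $\GR$ in (4) by the linear computations just described. The only member not already of the form $H\ltimes\R^2$ is $L(\Hfir,2)=Z\ltimes\R f_1$, whose translation space is the single line $\R f_1=\R\cdot(1,0)$; here I additionally use that $SO(2)$ is transitive on directions, so $\{k_1(u,0):k_1\in SO(2),\,u\in\R\}=\R^2$, and therefore $S\cdot(Z\ltimes\R f_1)\cdot S^{-1}=CO\ltimes\R^2$ as well.

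The step I expect to be the real (if mild) obstacle is the \emph{simultaneity} inside these semidirect-product identities: to get $M\subset S\cdot(H\ltimes\R^2)\cdot S^{-1}$ I must hit an arbitrary $(g,w)\in M$ with one coherent choice of data. This is where the two-sided conjugation and the freedom of the translation cooperate: given $g$, the linear decomposition supplies $k_1,k_2\in K$ and $h\in H$ with $g=k_1 h k_2^{-1}$, and then $v:=k_1^{-1}w$ is entirely unconstrained, so $(g,w)=(k_1,0)(h,v)(k_2,0)^{-1}$ lies in $S\cdot(H\ltimes\R^2)\cdot S^{-1}$. Once this compatibility is checked the argument is complete: no asymptotic analysis, no Cartan-projection estimates, and the passage from the uniform identity $S\cdot X\cdot S^{-1}=M$ to pairwise $\sim$ is the formal observation of the first paragraph.
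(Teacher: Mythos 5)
Your proposal is correct and takes essentially the same route as the paper: the paper's own proof is the one-line remark that the proposition ``immediately follows by KAK-decomposition or Kostant-decomposition'' (Fact~\ref{fact:kakknk}), and your argument consists of exactly those decompositions --- $\GL=O(2)\,\exp(\a)\,O(2)$ for part (3)'s diagonal case, Kostant's $KNK$ together with the centrality of $Z$ for $B^\prime$, the Iwasawa-type factorization for $P$, and $SO(2)\,O^\prime(\gamma)\,SO(2)=CO$ for $\gamma\neq 0$ --- made uniform via the affine identity $(k_1,0)(g,v)(k_2,0)^{-1}=(k_1gk_2^{-1},k_1v)$. Your extra care about simultaneity, including the $SO(2)$-transitivity fix for $L(\Hfir,2)=Z\ltimes\R f_1$ (which works precisely because $Z$ is central, so the linear constraint only pins down $k_1k_2^{-1}$ and leaves $k_1$ free), is exactly the detail the paper suppresses, and it is the same trick underlying $\R^2\sim SO(2)\ltimes\R^2$ in Proposition~\ref{prop:simsubgrpofSR}.
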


\begin{rmk}
By Proposition~\ref{prop:simsubgrp}, the case where the linear part is $O^\prime(\gamma),CO,\,P$ reduced to the case where the linear part is $Z,\GL$. Thus, we do not have to care of such cases to consider properness.
\end{rmk}


\section{Proof of Theorems~\ref{prop:LinL=DB'}-\ref{thm:RCI}}
In this section, we discuss the easier case which can be proved independent of the classification of connected subgroups of $\GR$.

\subsection{Proof of Theorem~\ref{prop:LinL=DB'}}
In this subsection, we classify the connected subgroups when either of the linear part is $\Hfif,\Hsix$. First, we review the classification the connected subgroups $H$ acting properly on $\R^2$.

\begin{fact}(\cite[Prop.\ A.2.1]{Kob90})
    Up to equivalence with respect to the relation $\sim$ , the connected subgroups acting properly on $\R^2$ are of the following forms.
    \[  \R^2, M=\{ \begin{pmatrix} 1 & 0 & t \\ t & 1 & u\end{pmatrix}\colon t,u \in \R \}, N = \{ \begin{pmatrix} 1 & 0 & t \\ t & 1 & \frac12 t^2\end{pmatrix}\colon t \in \R\},\]
    \[ L(\Hsec1,2) = \{ \begin{pmatrix} e^t & 0 & 0 \\ 0 & 1 & t\end{pmatrix}\colon t \in \R\}\subset L(\Hsec1,5) = \{  \begin{pmatrix} e^t & 0 & u \\ 0 & 1 & t\end{pmatrix}\colon u,t \in \R \}.\]
\end{fact}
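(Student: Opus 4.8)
The plan is to work directly with the affine action, using the identification $\R^2\cong \GR/\GL$ under which $\GL$ is the stabilizer of the origin; thus ``$H$ acts properly on $\R^2$'' is exactly $H\pf \GL$ in $\GR$ (Proposition~\ref{prop:simpf}). I would encode properness through the standard criterion that for every compact $C\subset\R^2$ the set $\{h\in H: hC\cap C\neq\emptyset\}$ is relatively compact in $H$, where $h\cdot x=\Lin(h)x+\U(h)$.

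The key reduction is a necessary condition that I would isolate as a lemma: \emph{if $H$ acts properly on $\R^2$, then the stabilizer of the origin $H\cap\GL$ is compact; equivalently, the pure-linear part $\mathfrak{h}\cap\mathfrak{gl}_2(\R)$ of its Lie algebra generates a compact subgroup} (hence lies in a conjugate of $\mathfrak{so}(2)$). This is immediate: proper actions have compact point-stabilizers, and the stabilizer of the origin consists precisely of the elements of $H$ with vanishing translational part, i.e.\ $H\cap\GL$, whose Lie algebra is $\mathfrak{h}\cap\mathfrak{gl}_2(\R)$. A non-compact pure-linear one-parameter subgroup fixes the origin and already violates the (CI) condition, hence properness.

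With this filter I would run through the classification of subalgebras of $\mathfrak{gl}_2(\R)\ltimes\R^2$ from Section~\ref{sec:clGR} and discard every subalgebra containing a pure-linear generator of non-compact flow. Concretely: each subalgebra with linear part $\hfir,\hthi,\hfif,\hsix,\mathfrak{co},\mathfrak{p},\sl$, or $\mathfrak{gl}_2(\R)$ contains one of $e_2,e_4,e_1+e_4$; each $\l(\hfou\gamma,\cdot)$ contains $e_1$, except the exceptional cases $\l(\hfou{-3},3),\l(\hfou{-3},6)$, which contain the diagonal element $e_2-3e_4$; each $\l(\hsec\gamma,\cdot)$ with $(\gamma,\text{index})\notin\{(1,2),(1,4)\}$ contains $e_2+\gamma e_4$; the cases $\l(\mathfrak{u}',1),\l(\mathfrak{u}',3),\l(\mathfrak{u}',5)$ contain $e_1$; and $\mathfrak{o}'(\gamma)$ with $\gamma\neq 0$ is already non-compact. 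What survives is a short list: the pure-translation subalgebras $\l(\mathbf{0},1),\l(\mathbf{0},2)$; the subgroups $N$ and $M$ (whose algebras are $\l(\mathfrak{u}',2)=\n$ and $\l(\mathfrak{u}',4)=\m$); the two subalgebras with linear part $\hsec{1}$ in which the translation is coupled to the flow; and the compact/abelian cases $\mathfrak{so}(2)$ and $\mathfrak{so}(2)\ltimes\R^2$ coming from $\mathfrak{o}'(0)$.

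Finally I would verify directly that every survivor acts properly, a one-line coordinate computation in each case: for $N$ the first coordinate of $h\cdot x$ equals $x_1+t$, so $hC\cap C\neq\emptyset$ forces $t$ bounded; for $M$, $L(\Hsec{1},2)$ and $L(\Hsec{1},5)$ a suitable coordinate of the displacement likewise controls all group parameters, and the pure translations form a closed subgroup acting properly. Assembling these with the equivalences of Section~\ref{sec:clGR} (a one-dimensional translation group $\sim\R^2$, and $\mathfrak{so}(2)\ltimes\R^2\sim\R^2$ while $\mathfrak{so}(2)\sim\{e\}$) collapses the survivors to exactly $\R^2, M, N, L(\Hsec{1},2), L(\Hsec{1},5)$. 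The main obstacle is not any single estimate but the bookkeeping of completeness: confirming the filter has been applied to every family in the classification, correctly reading off a pure-linear generator in each discarded case, and reconciling the indexing of $L(\Hsec{1},5)$ with Section~\ref{sec:clGR} (its algebra is the two-dimensional $\spnR{e_2+e_4+f_2,\,f_1}$, not the three-dimensional $\l(\hsec{1},5)$).
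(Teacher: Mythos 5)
Your proof is correct, but note that the paper itself gives no internal proof of this statement: it is quoted as a Fact from Kobayashi~\cite[Prop.\ A.2.1]{Kob90}, so your argument is necessarily a different route --- and a useful one, since it re-derives the 1990 result entirely from machinery already present in this paper. Your sieve-then-verify structure is sound: the compact-stabilizer lemma (the origin-stabilizer is $H\cap\GL$, whose Lie algebra is $\mathfrak{h}\cap\mathfrak{gl}_2(\R)$, so any pure-linear generator with non-compact flow kills properness via the (CI) condition for the pair $(H,\GL)$) is only a \emph{necessary} condition, and you correctly close the loop by checking properness of each survivor directly through the displacement coordinates ($x_1+t$ for $N$, $x_2+t$ for the $\Hsec1$-type groups, etc.); conjugation-invariance of $H\pf\GL$, which justifies testing only the standard representatives, follows from $g\GL g^{-1}\sim\GL$ and Proposition~\ref{prop:simpf}~(1). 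Your case sweep through the classification of Section~\ref{sec:clGR} is complete and each discarded family does contain the pure-linear element you name (including the exceptional $e_2-3e_4\in\l(\hfou{-3},3),\l(\hfou{-3},6)$, which one might otherwise be tempted to keep since $L(\Hfou{-3},3)\pf\R^2$ holds --- but that concerns the action on $G/\R^2$, not on the plane $G/\GL$). You also correctly flagged the indexing wrinkle: the group displayed as $L(\Hsec1,5)$ in the Fact has two-dimensional algebra $\spnR{e_2+e_4+f_2,\,f_1}$, i.e.\ $\l(\hsec1,4)$ in Section~\ref{sec:clGR}'s notation, whereas the three-dimensional $\l(\hsec1,5)$ contains the pure-linear $e_2+e_4$ and fails your filter; the Fact's labels follow a different normalization. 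The only cosmetic residue is that the compact survivors $SO(2)$ and $\{e\}$ (from $\mathfrak{o}^\prime(0)$) trivially act properly and form the trivial $\sim$-class, which the Fact's list implicitly omits, consistent with the paper's convention of listing non-trivial classes. What your approach buys over the bare citation is a self-contained, checkable derivation and a consistency check against the appendix tables; what the citation buys is brevity and independence from the classification of \cite{CKZ24}, which Kobayashi's original argument predates.
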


In the remainder of this subsection, we prove the following theorem.
\newtheorem*{prop:LinL=DB'}{\rm\bf Theorem~\ref{prop:LinL=DB'}}
\begin{prop:LinL=DB'}{\itshape
   Let $L$ and $H$ be connected closed subgroups of $\GR$. If $L \pf H$ in $G$, then $(L,H)$ is of the following form:
\[ (L(\Hsix,1),N)\]
where $L$ is $L(\Hfif,1)$ or $L(\Hsix,1)$.}
\end{prop:LinL=DB'}
\begin{rmk}
    \begin{enumerate}
        \item The subgroups $L(\Hfif,1)$ and $L(\Hsix,1)$ contains $\GL$ modulo $\sim$.
        \item Assume that $\Lin(L)$ is $\Hfif$ or $\Hsix$.
        In the remaining cases,  $L$ is equivalent to $GL_2(\mathbb{R})$ or to $\GR$ with respect to the relation $\sim$ .
        The former reduces to Kobayashi~\cite[Prop.\ A.2.1]{Kob90}, while the latter can never occur in a proper pair.
        Therefore, to complete the classification in this case, it suffices to prove Theorem~\ref{prop:LinL=DB'}.
    \end{enumerate}
\end{rmk}
\begin{proof}[Proof of Thereom~\ref{prop:LinL=DB'}]
    We only prove $L(\Hfif,1)\pf N$, $L(\Hsix,1) \not\pf N$ and $L(\Hsix,1) \not\pf L(\Hsec1,2)$, since other pairs do not satisfy the (CI) condition.

    In order to prove  $L(\Hfif,1) \pf N$ and $L(\Hsix,1) \not\pf N$, we use Theorem~\ref{thm:slded}. Since $N \subset \SR$ and 
    \[L(\Hfif,1)\cap (\SR) = \{\begin{pmatrix} e^t & 0 & u \\ 0 & e^{-t} & 0\end{pmatrix}\colon t,u \in \R\},\]
    \[ L(\Hsix,1) \cap (\SR) = \{ \begin{pmatrix}1 & 0 & 0 \\ u & 1 & v \end{pmatrix} \colon u,v \in \R \}, \]
    we have $L(\Hfif,1) \pf N$ and $L(\Hsix,1) \not\pf N$ by Theorems \ref{thm:properSR} and \ref{thm:slded}.

    We prove $L(\Hsix,1) \not\pf L(\Hsec1,1)$. Since $\Lin(L(\Hsix,1)) \not\pf \Lin(N) $ in $\GR$ by Theorem~\ref{thm:properGL}, there exists a compact set $S$ of $\GL$ and  sequences $t_n,$ $t_n^\prime,$ $u_n^\prime, $ $a_n,$ $b_n,$ $c_n,$ $d_n,$ $A_n,$ $B_n,$ $C_n,$ $D_n$ $\in \R$
    such that
    \begin{itemize}
        \item $t_n \to \infty$ as $n \to \infty$,
        \item $\begin{pmatrix} a_n & b_n \\ c_n & d_n\end{pmatrix}, \begin{pmatrix} A_n & B_n \\ C_n & D_n\end{pmatrix} \in S$,
        \item $\begin{pmatrix}a_n &b_n \\ c_n & d_n \end{pmatrix} \begin{pmatrix}e^{t_n^\prime} &0 \\u_n^\prime & e^{t^\prime_n} \end{pmatrix} \begin{pmatrix}A_n & B_n \\ C_n & D_n \end{pmatrix} =\begin{pmatrix} e^{t_n} & 0 \\ 0 & 1 \end{pmatrix}$.
    \end{itemize}
     When we set $e_n=f_n=F_n=0, E_n=\frac{b_n t_n}{e^{t_n^\prime}}, v_n^\prime= d_n t_n -\frac{b_n t_n}{e^{t_n^\prime}}u_n^\prime$,
    we have 
    \[ \begin{pmatrix} a_n & b_n & e_n \\ c_n &d_n &f_n\end{pmatrix}^{-1} \begin{pmatrix} e^{t_n^\prime} & 0 & 0\\u_n^\prime & e^{t_n^\prime} & v_n^\prime \end{pmatrix} \begin{pmatrix} A_n & B_n & E_n \\ C_n & D_n& F_n \end{pmatrix} = \begin{pmatrix} e^t_n & 0 & 0 \\ 0 & 1 & t_n\end{pmatrix},
    \]
    and the elements $\begin{pmatrix} a_n & b_n &e_n \\ c_n&d_n&f_n\end{pmatrix}, \begin{pmatrix} A_n & B_n & E_n \\ C_n & D_n&F_n\end{pmatrix}$ contained in some compact set of $\GR$ for all $n$. Therefore, we have $L(\Hsix,1) \not\pf N$ in $\GR$.
\end{proof}

\subsection{Proof of Theorem~\ref{thm:slded}}
This subsection is devoted to the proof of Theorem~\ref{thm:slded}.

\newtheorem*{thm:slded}{\rm\bf Theorem~\ref{thm:slded}}
\begin{thm:slded}{\itshape
    Suppose that $L \subset SL_n(\R) \ltimes\R^n$ is a  subset and $L^{\prime} \not\subset SL_n(\R)\ltimes \R^n$ is a connected subgroup of $GL_n(\R)\ltimes \R^n$. Then, we have the following equivalence:
\[ L \pitchfork L^{\prime} \ \text{in  }  GL_n(\R) \ltimes \R^n \iff L\pitchfork (L^{\prime}\cap SL_n(\R) \ltimes \R^n) \text{ in } SL_n(\R) \ltimes \R^n.\]}
\end{thm:slded}

\begin{proof}
The forward direction is obvious, so we prove the opposite direction. Assume $L\pf H$ in $GL_n(\R) \ltimes \R^n$. Then, there exists a compact set $S$ in $GL_n(\R) \ltimes \R^n$, a sequence $g_i^\prime \in H$ diverging to infinity, and a sequence $s_i , s_i^\prime \in S$ such that 
\[ s_i g^\prime_i s^\prime_i \in L. \]
Since $H \not\subset SL_n(\R)\ltimes \R^n$, there exists $X\in \mathrm{Lie}(H)$ such that $\det (\Lin(\exp X))\neq 1$. Define the subset $C$ of $GL_n(\R) \ltimes \R^n$ by
\[ C\coloneqq \{ \exp tX \colon -\log(\max_{s\in S}\det (\Lin(s))) \leq t \leq -\log (\min_{s\in S} \det(\Lin(s)) \},\]
which is a compact set. We have 
\[ s_i g^\prime_i s^\prime_i  = \left(s_i \exp(-t_iX)\right) \left( \exp (t_iX) g_i^\prime \exp(t_i^\prime X) \right) \left(\exp(-t_i^\prime X)s_i^\prime \right),\]
where
\[ \det(\Lin(s_i)) = \det(\Lin(\exp (t_i X))),\quad
 \det (\Lin(s_i^\prime))=\det (\Lin(\exp(t_i^\prime X))).\]
Since the sequences $s_i \exp (-t_iX)$ and $\exp(-t_i^\prime X)s_i^\prime$ are contained in the compact set $CSC \cap (\SR)$, and $\exp (t_i X) g_i^\prime \exp (t_i^\prime X)$ is an unbounded sequence in $H \cap (SL_n(\R)  \ltimes \R^n)$, we have 
\[ L ^\prime \not\pf (H \cap SL_n(\R) \ltimes \R^n ) \text{ in } SL_n(\R) \ltimes  \R^n.\]
\end{proof}

\subsection{Proof of Theorem~\ref{thm:RCI}}
We prove Theorem~\ref{thm:RCI}, which can be adapted for affine transformation groups of any dimension.

\newtheorem*{thm:RCI}{\rm\bf Theorem~\ref{thm:RCI}}
\begin{thm:RCI}{\itshape
Let $L$ be a closed subgroup in $GL_n(\R)\ltimes \R^n$ whose linear part $\Lin(L)$ is closed, and $H$ be a subset in $GL_n(\R)\ltimes \R^n$. If the pair $(L,\R^n)$ is (CI) and $\Lin(L) \pf \Lin(H)$ in $GL_n(\R)$, then $L \pf H$ in $GL_n(\R)\ltimes \R^n$.}
\end{thm:RCI}

For the proof of the theorem, we prepare the following proposition.
\begin{prop}\label{prop:RCIproper}
    In the setting of the above theorem, the pair $(L,\R^n)$ is (CI) if and only if $L \pf \R^n$ in $GL_n(\R)\ltimes \R^n$.
\end{prop}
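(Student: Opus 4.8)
The plan is to exploit the fact that the translation subgroup $\R^n$ is normal in $GL_n(\R)\ltimes\R^n$, which collapses the (CI) condition for the pair $(L,\R^n)$ to a single concrete statement. Write $G=GL_n(\R)\ltimes\R^n$. Since $g\R^n g^{-1}=\R^n$ for every $g\in G$, the pair $(L,\R^n)$ is (CI) precisely when $L\cap\R^n$ is compact. But $L\cap\R^n$ is a closed subgroup of the vector group $\R^n$, and such a subgroup is compact if and only if it is trivial. Noting that $L\cap\R^n=\ker(\Lin|_L)$, the (CI) condition is therefore equivalent to the injectivity of the restricted linear part map $\Lin|_L\colon L\to\Lin(L)$.

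The forward implication---if $L\pf\R^n$ then $(L,\R^n)$ is (CI)---is the general fact that properness implies (CI), recorded just after Definition~\ref{def:CI}, so nothing further is needed there.

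For the converse I would assume (CI), i.e.\ $L\cap\R^n=\{e\}$, and prove $L\pf\R^n$. By Proposition~\ref{prop:simpf}~(2) it suffices to show that the action of $L$ on $G/\R^n$ is proper. First I would identify $G/\R^n\cong GL_n(\R)$ via $(g,v)\R^n\mapsto g$; under this identification the left action of $L$ becomes $l\cdot g=\Lin(l)\,g$, so $L$ acts by left translation through its linear part. Properness of this action amounts to showing that for every compact $C\subset GL_n(\R)$ the set $\{l\in L:\Lin(l)\in CC^{-1}\}$ is relatively compact in $L$. Here both hypotheses enter: since $\Lin(L)$ is closed in $GL_n(\R)$, the set $CC^{-1}\cap\Lin(L)$ is compact; and since $\Lin|_L\colon L\to\Lin(L)$ is a continuous bijective homomorphism of Lie groups (injectivity being exactly the (CI) condition), the open mapping theorem makes it a topological isomorphism. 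Consequently the set in question, which equals $(\Lin|_L)^{-1}\!\bigl(CC^{-1}\cap\Lin(L)\bigr)$, is compact, and the action is proper.

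The step I expect to require the most care is the last one: justifying that $\Lin|_L$ is a proper map onto $\Lin(L)$, i.e.\ a closed embedding. This is where the closedness of $\Lin(L)$ is essential---without it $CC^{-1}\cap\Lin(L)$ need not be compact---and where the injectivity coming from (CI) combines with the open mapping theorem for locally compact second-countable groups to upgrade $\Lin|_L$ to a homeomorphism onto its image. Identifying the homogeneous space $G/\R^n$ and the induced $L$-action is routine but should be stated cleanly so that the reduction to a left-translation action is transparent.
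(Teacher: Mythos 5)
Your proposal is correct and follows essentially the same route as the paper's proof: both use normality of $\R^n$ to reduce (CI) to triviality of $L\cap\R^n=\ker(\Lin|_L)$, upgrade $\Lin|_L$ to a homeomorphism onto $\Lin(L)$ via the open mapping theorem, and then deduce properness of $L\curvearrowright G/\R^n\cong GL_n(\R)$ from the closedness of $\Lin(L)$. Your spelled-out $CC^{-1}$ verification is just a more explicit version of the paper's appeal to the fact that a closed subgroup acts properly by translation.
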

\begin{proof}
    The if part is obvious, so we prove the converse direction. Since the pair $(L,\R^n)$ is (CI), the subgroup 
    \[ \ker (\Lin|_L) = L\cap \R^n\]
    is compact. Since it is a subgroup of $\R$, it is a trivial group. Thus, we obtain the continuous bijective homomorphism $\Lin|_L\colon L \to \Lin(L)$. By the open mapping theorem (see \cite[Appx.\ 1]{HM06}), the map is a homeomorphism.

    For any compact set $S \subset (GL_n(\R)\ltimes \R^n)/\R^n \cong GL_n(\R)$, we obtain 
    \[ L_S = (\Lin|_L)^{-1}(\Lin|_L)(L_S) = (\Lin|_L)^{-1}(\Lin(L)_S).\]
    The subset $\Lin(L)_S$ is compact. Indeed, the action $\Lin(L) \curvearrowright GL_n(\R)$ is proper since $\Lin(L)$ is a closed subgroup of $GL_n(\R)$. Hence, the subset $L_S$ is also compact. Therefore, the pair $(L,\R^n)$ is proper since the action $L\curvearrowright GL_n(\R)\ltimes \R^n/\R^n$ is proper.
\end{proof}

We now prove Theorem~\ref{thm:RCI}.
\begin{proof}[Proof of Theorem~\ref{thm:RCI}.]
    For a compact subset $S$ in $GL_n(\R)\ltimes\R^n$, the subset 
    \[ C\coloneqq \Lin(L) \cap \Lin(S)\Lin(H)\Lin(S^{-1}) \]
    is compact since $\Lin(L) \pf \Lin(H)$ in $GL_n(\R)$. Hence, we have 
    \begin{align*}
         L\cap SH S^{-1} &\subset L \cap \left( (\Lin(L)\cap \Lin(S)\Lin(H)\Lin(S^{-1}))\times \R^n\right) \\
         &= L \cap (C\times\R^n). 
    \end{align*}
    Since the subset on the right-hand side is compact by Proposition~\ref{prop:RCIproper}, the subset $L\cap SH S^{-1}$ is also compact. Therefore, we obtain $L\pf H$.
\end{proof}


\section{Proof of Theorem~\ref{thm:conditionABC}}
In this section, we prove Theorem~\ref{thm:conditionABC}. We begin with the following remark which helps reduce discussion of determining properness.

\begin{rmk}
    The inclusion relations among $L(\Hsec{\alpha},i)$ and $L(\Hfou{\beta},j)$ are depicted in Figure~\ref{fig:hasse}. The order in the Hasse diagram means that the inclusion can be realized for some choices of the parameters $\alpha$ and $\beta$, up to the equivalence relation $\sim$.
\end{rmk}
\begin{figure}[htbp]
  \centering
  \begin{tikzpicture}[scale=0.9]

  \node (LH47) at (4,6) {$L(B(\beta),7)$};
  \node (LH46) at (-3,4.5) {$L(B(-3),6)$};
  \node (LH25) at (0,4.5)  {$L(A(\alpha),5)$};
  \node (LH44) at (4,4.5)  {$L(B(\beta),4)$};
  \node (LH45) at (7,4.5)  {$L(B(-1),5)$};

  \node (LH43) at (-3,3)   {$L(B(-3),3)$};
  \node (LH23) at (0,3)    {$L(A(\alpha),3)$};
  \node (LH41) at (4,3)    {$L(B(\beta),1)$};
  \node (LH24) at (7,3) {$L(A(1),4)$};
  \node (LH42) at (10,3) {$L(B(1),2)$};

  \node (LH21) at (0,1.5)  {$L(A(\alpha),1)$};
  \node (LH22) at (7,1.5) {$L(A(1),2)$};

  \draw[gray] (LH47) -- (LH46);
  \draw[gray] (LH47) -- (LH25);
  \draw[gray] (LH47) -- (LH44);
  \draw[gray] (LH47) -- (LH45);

  \draw[gray] (LH46) -- (LH43);
  \draw[gray] (LH43) -- (LH21);

  \draw[gray] (LH25) -- (LH23);
  \draw[gray] (LH23) -- (LH21);

  \draw[gray] (LH44) -- (LH41);
  \draw[gray] (LH41) -- (LH21);

  \draw[gray] (LH44) -- (LH23);
  \draw[gray] (LH44) -- (LH42);

  \draw[gray] (LH45) -- (LH24);

  \draw[gray] (LH24) -- (LH22);
  \draw[gray] (LH42) -- (LH22);

  \draw[gray] (LH22) -- (LH23);
  \draw[gray] (LH24) -- (LH25);

  \draw[line width=3pt, white] (LH44) -- (LH42);
  \draw[gray] (LH44) -- (LH42);

  \draw[line width=3pt, white] (LH25) -- (LH24);
  \draw[gray] (LH25) -- (LH24);

  \draw[line width=3pt, white] (LH44) -- (LH23);
  \draw[gray] (LH44) -- (LH23);

  \draw[line width=3pt, white] (LH41) -- (LH21);
  \draw[gray] (LH41) -- (LH21);

\end{tikzpicture}
  \caption{Inclusions among $L(A(\alpha),i),L(B(\beta),j)\subset \GR$}
  \label{fig:hasse} 
\end{figure}

\subsection{{\bf(A)} The case $\Lin(L)=\Hsec\alpha, \Lin(H)=\Hsec\beta$}

In this subsection, we prove Theorem~\ref{thm:conditionABC}~{\bf(A)}. We now review its statement.

\newtheorem*{thm:conditionABC}{\rm\bf Theorem~\ref{thm:conditionABC}}
\begin{thm:conditionABC}{\bf(A)} {\itshape 
Assume that $\Lin(L)=\Hsec\alpha, \Lin(H)=\Hsec{\beta}$. Then, the (CI) condition is equivalent to properness of the pair.}
\end{thm:conditionABC}

We prove this using the classification of the connected subgroups of $\GR$ whose linear part is $\Hsec{\gamma}$.  For the proof of the forward direction, we list the pairs that satisfy the condition (CI).

\begin{lem}\label{lem:CIpair}
The connected pairs satisfying the condition (CI) are as follows.
\begin{enumerate}
    \item \[
\begin{aligned}
&(L(\Hsec{\alpha},1),L(\Hsec{\beta},1)),\quad
(L(\Hsec{\alpha},1),L(\Hsec{\beta},3)),\quad
(L(\Hsec{\alpha},1),L(\Hsec{\beta},5)),\\
&(L(\Hsec{1},2),L(\Hsec{\beta},3)),\quad
(L(\Hsec{1},2),L(\Hsec{\beta},5)),\quad
(L(\Hsec{\alpha},1),L(\Hsec{1},2)),\\
&(L(\Hsec{\alpha},1),L(\Hsec{1},4)),
\end{aligned}
\]
where the parameters have different absolute values.
    \item $(L(\Hsec{1},2), L(\Hsec{1},3))$, $(L(\Hsec1,1), L(\Hsec{1},2))$, $(L(\Hsec1,1),L(\Hsec{1},4))$.
\end{enumerate}

\end{lem}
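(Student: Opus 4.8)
The plan is to establish the lemma by a finite case analysis over the five families $L(\Hsec{\gamma},i)$ ($i=1,\dots,5$) whose linear parts are of the form $\Hsec{\gamma}$, reducing every case to two structural observations. First I would record that each $L(\Hsec{\gamma},i)$ is a simply connected solvable subgroup of $\GR$, hence contains no nontrivial compact subgroup; consequently, for any such pair the intersection $L\cap gHg^{-1}$ is compact if and only if it is trivial, so the (CI) condition reads: $L\cap gHg^{-1}=\{e\}$ for every $g\in\GR$. Since (CI) is symmetric in $L$ and $H$, I treat pairs as unordered and only test the finitely many type-combinations, keeping track of the parameters.

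The first tool controls the linear part. Writing $\bar g=\Lin(g)$, we have $\Lin\!\left(L\cap gHg^{-1}\right)\subseteq \Hsec{\alpha}\cap \bar g\,\Hsec{\beta}\,\bar g^{-1}$. A nonidentity element of $\Hsec{\alpha}$ has two distinct real eigenvalues with the coordinate axes as eigenlines, so it can lie in $\bar g\,\Hsec{\beta}\,\bar g^{-1}$ only when $\bar g$ permutes the two axes, and comparing eigenvalue ratios then forces $|\alpha|=|\beta|$. Hence if $|\alpha|\ne|\beta|$ this intersection is trivial for all $\bar g$, so $L\cap gHg^{-1}\subseteq\R^2$; and if moreover one of $L,H$ is of type $1$ or $2$ (which have trivial translation subgroup), then $L\cap gHg^{-1}$ lies inside that member's intersection with $\R^2$, namely $\{0\}$, giving (CI). The second tool is the obstruction at $g=e$: since $(L\cap\R^2)\cap(H\cap\R^2)\subseteq L\cap H$ and the translation subgroups are $\{0\}$ for types $1,2$, the line $\spnR{f_1}$ for types $3,4$, and $\R^2$ for type $5$, any pair with both members among types $3,4,5$ already contains a noncompact group and fails (CI). Combining the two tools settles every pair with $|\alpha|\ne|\beta|$: (CI) holds exactly when at least one member has type $1$ or $2$, which yields precisely the seven pairs of item (1).

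The delicate case is $|\alpha|=|\beta|$, where the linear parts can coincide and the first tool no longer trivializes the intersection. Here the eigenvalue analysis splits into two branches: either the axes are fixed (so $\bar g$ is diagonal and $\beta=\alpha$) or the axes are swapped (so $\bar g$ is anti-diagonal and $\beta=-\alpha$, available because type $3$ admits parameters of either sign while types $1,5$ are normalized to $\gamma\ge 0$). I would first discard the nested or equal pairs, such as $(L(\Hsec{\alpha},1),L(\Hsec{\alpha},3))$, $(L(\Hsec{\alpha},1),L(\Hsec{\alpha},5))$, $(L(\Hsec1,2),L(\Hsec1,4))$, $(L(\Hsec1,2),L(\Hsec1,5))$, whose intersection at $g=e$ contains the smaller noncompact group. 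The opposite-sign pairs involving a type-$3$ member, namely $(L(\Hsec{\alpha},1),L(\Hsec{-\alpha},3))$ and $(L(\Hsec1,2),L(\Hsec{-1},3))$, fail as well: choosing $\bar g$ anti-diagonal makes the swapped linear flow of $H$ agree with that of $L$ for the entire one-parameter family, and a suitable translational part $w$ then embeds the whole member into $gHg^{-1}$.

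This leaves the three genuinely surviving pairs $(L(\Hsec1,1),L(\Hsec1,2))$, $(L(\Hsec1,1),L(\Hsec1,4))$ and $(L(\Hsec1,2),L(\Hsec1,3))$, and verifying their (CI) is the main obstacle. For each I would take an element with linear-flow parameter $t$ and impose membership in a conjugate of the other group; the same-sign branch of the eigenvalue analysis forces $\bar g$ to be diagonal (the anti-diagonal branch gives only $t=0$ here), and then the rigid coupling between the $f_2$–translation coordinate and $t$ built into types $2$ and $4$ makes one translation coordinate a nonzero multiple of $t$. Requiring that coordinate to vanish forces $t=0$, so the intersection is trivial and (CI) holds. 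The crux is exactly this translation-coupling computation, carried out for each of the three pairs; everything else reduces to the two structural observations above.
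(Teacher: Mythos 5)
Your proposal is correct, but there is essentially nothing in the paper to compare it against: the paper states Lemma~\ref{lem:CIpair} with no proof at all, treating the (CI) verification as a routine case-by-case check, and immediately uses the list in the proof of Theorem~\ref{thm:conditionABC}~{\bf(A)}. Your argument supplies that missing verification, and it is sound. Since each $L(\Hsec{\gamma},i)$ is simply connected solvable, compact subgroups are trivial, so (CI) indeed reduces to $L\cap gHg^{-1}=\{e\}$ for all $g$; the eigenline argument correctly shows that a nontrivial linear intersection forces $|\alpha|=|\beta|$ with $\Lin(g)$ diagonal ($\beta=\alpha$) or anti-diagonal ($\beta=-\alpha$); and the $g=e$ obstruction (types $3,4$ contain the translations $\spnR{f_1}$, type $5$ contains $\R^2$) eliminates every pair with both members of types $3,4,5$, which yields exactly the seven families of item (1). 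For item (2) your reductions are also right: nested pairs fail at $g=e$; the opposite-sign pairs fail via the coordinate swap $J$, e.g.\ $J\,L(\Hsec{-1},3)\,J^{-1}=\{(\mathrm{diag}(e^{2t},1),(0,u))\}\supset L(\Hsec{1},2)$; and the three surviving pairs come down to the translation--linear coupling. One small point of care, which your phrasing glosses over: the coupling computation runs in two slightly different directions. For $(L(\Hsec1,1),L(\Hsec1,2))$ and $(L(\Hsec1,1),L(\Hsec1,4))$, with $g=(\mathrm{diag}(p,q),w)$ the conjugated type-$2$/$4$ element has second translation coordinate $w_2+qs-w_2=qs$ (the $w_2$ cancels precisely because the lower-right linear entry is $1$), and membership in the type-$1$ group forces $qs=0$, hence $s=0$; whereas for $(L(\Hsec1,2),L(\Hsec1,3))$ the conjugated type-$3$ element has second translation coordinate $0$ while membership in $L(\Hsec1,2)$ requires it to equal $t$, forcing $t=0$. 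In all three cases the residual $s=0$ (or $t=0$) elements are conjugated pure translations, killed because types $1$ and $2$ meet $\R^2$ trivially, so your case analysis is complete and matches the lemma's list exactly.
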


\begin{proof}[Proof of Theorem~\ref{thm:conditionABC}~{\bf(A)}]
It suffices to prove that the pairs in the above lemma are proper. By Theorem~\ref{thm:RCI}, the pairs in Lemma~\ref{lem:CIpair}~(1) are proper. Thus, we may prove the following pairs are proper:
\begin{enumerate}
    \item $(L(\Hsec{1},1), L(\Hsec{1},4))$,
    \item $(L(\Hsec{1},2), L(\Hsec{1},3)).$
\end{enumerate}
The remaining pair $(L(\Hsec1,1),L(\Hsec1,2))$ is proper since $L(\Hsec1,2) \subset L(\Hsec1,4)$. Assume that the pair (1) is not proper. Then there exists a divergent sequence $\begin{pmatrix} e^{2t_n} & 0 & u_n\\ 0 & 1 & 2t_n\end{pmatrix} \in L(\Hsec1,4)$ and sequences $s_n, s_n^{\prime}$ contained in some compact set of $\GR$ such that 
\[ s_n \begin{pmatrix} e^{2t_n} & 0 & u_n\\ 0 & 1 & 2t_n\end{pmatrix} s_n^{\prime} \in L(\Hsec{1},1)\]
for all $n \in \N$. Consider the translational part. Then, we have 
\[ \begin{pmatrix} u_n\\2t_n\end{pmatrix} + \begin{pmatrix} e^{2t_n} &0 \\0 & 1 \end{pmatrix} \U (s_n^{\prime}) \asymp_+ \Lin(s_n^{-1})\begin{pmatrix} 0 \\ 0\end{pmatrix} \asymp_+ \begin{pmatrix} 0 \\ 0\end{pmatrix}.\]
The second entry of the left hand side is not bounded, but that of the right hand side is bounded, which is contradiction.

Assume the pair (2) is not proper. For proof, we prepare a lemma.
\begin{lem}\label{lem:H2H2lem}
Assume sequences $\begin{pmatrix} e^{2t_n} & 0 \\ 0 & 1\end{pmatrix}, \begin{pmatrix} e^{2t_n^{\prime}} & 0 \\ 0 & 1\end{pmatrix}$ and $\begin{pmatrix} a_n & b_n \\ c_n & d_n\end{pmatrix} ,s_n^{\prime} \in \GL$ contained in some compact set of $\GL$ and satisfy $\begin{pmatrix} a_n & b_n \\ c_n & d_n\end{pmatrix} \begin{pmatrix} e^{2t_n} & 0 \\ 0 & 1\end{pmatrix}(s_n^{¥prime})^{-1} = \begin{pmatrix} e^{2t_n^{\prime}} & 0 \\ 0 & 1\end{pmatrix}.$ Then, we have 
\begin{enumerate}
    \item $t_n\asymp_+t_n^{\prime}$,
    \item $|c_n|\lesssim_\times e^{-2t_n}$ and $|a_n|, |d_n| \asymp_\times 1$.
\end{enumerate}
\end{lem}
\begin{proof}
    The first statement follows by Fact~\ref{fact:ue}. We prove the second one. We have 
    \[ \begin{pmatrix} a_ne^{2t_n} & b_n \\ c_ne^{2t_n} & d_n  \end{pmatrix} = \begin{pmatrix} e^{2t_n^{\prime}} & 0 \\ 0 & 1\end{pmatrix} s_n^{¥prime}.\]
    By Proposition~\ref{prop:normcpt}, the norm of the second column $||\pmat{0&1} s_n^{¥prime} ||$ is asymptotically equal to $1$, that is, $||\pmat{0&1}s_n^{¥prime}|| \asymp_\times 1 $. Thus, the norm of the second column of the left hand side $\Vmat{c_ne^{2t_n}&d_n} \asymp_\times 1$. Hence, we have $|c_n| \lesssim_\times e^{-2t_n}.$ Since $\begin{vmatrix} a_n & b_n \\ c_n & d_n\end{vmatrix} \asymp_\times 1$, we have 
    \[ |a_n| \gtrsim_\times \frac{\begin{vmatrix} a_n & b_n \\ c_n & d_n\end{vmatrix} -|b_n||c_n| }{|d_n|} \gtrsim_\times 1,\quad |d_n| \gtrsim_\times \frac{\begin{vmatrix} a_n & b_n \\ c_n & d_n\end{vmatrix} -|b_n||c_n|}{|a_n|} \gtrsim_\times 1.\]
\end{proof}
We prove Lemma~\ref{lem:CIpair}~(2). Assume the pair is not proper. Then, there exist divergent sequences $g_n = \begin{pmatrix} e^{2t_n} & 0 & 0 \\ 0 & 1 & 2t_n\end{pmatrix} \in L(\Hsec1,2)$, $g_n^{\prime} = \begin{pmatrix} e^{2t_n^{\prime}} & 0 & v_n^{\prime} \\ 0 & 1 & 0\end{pmatrix} \in L(\Hsec1, 3)$ and sequences $s_n =\begin{pmatrix} a_n & b_n & * \\ c_n & d_n & *\end{pmatrix}, s_n^{\prime} $ contained in some compact set of $\GR$ such that 
\begin{enumerate}
    \item[(a)] $\Lin(s_n) \Lin(g_n) \Lin(s_n^{\prime})^{-1} = \Lin(g_n^{\prime}),$
    \item[(b)] $\U(g_n^{\prime}) + \Lin(g_n^{\prime})\U(s_n^{\prime}) \asymp_+ \Lin(s_n)\U(g_n)$.
\end{enumerate}
By the condition (a) and Lemma~\ref{lem:H2H2lem}, we have $t_n\asymp_+t_n^{\prime}$ and $|a_n|, |d_n| \asymp_\times 1$. By (b), we have 
\[\begin{pmatrix} v_n^{\prime} \\ 0\end{pmatrix}+ \begin{pmatrix} e^{2t_n^{\prime}} & 0 \\ 0 & 1 \end{pmatrix} \Lin(s_n^{\prime}) \asymp_+ \frac{1}{|\det s_n|} \begin{pmatrix} a_n & b_n \\ c_n & d_n \end{pmatrix} \begin{pmatrix} 0 \\ 2t_n\end{pmatrix}.\]
The second entry of the left hand side is bounded, but that of right hand side $\frac{1}{\det s_n}\cdot 2d_nt_n \asymp_\times t_n $ is not bounded, which is a contradiction.
\end{proof}

\subsection{{\bf(B)} The case $\Lin(L)=\Hfou\alpha, \Lin(H)=\Hfou\beta$}
In this section, we prove Theorem~\ref{thm:conditionABC}~{\bf(B)}. We review its statement.

\begin{thm:conditionABC}{\bf(B)} {\itshape
Assume that $\Lin(H)=\Hfou\alpha, \Lin(L)=\Hfou\beta$. If the pair $(L,H)$ is proper, then either $L$ or $H$ is $L(\Hfou{-3},3)$ or $L(\Hfou{-3},6)$. Furthermore, if $H$ is one of them, then the following equivalence holds:
        \[ L\pf H \iff (L,H) \text{ is (CI) and } |\alpha|<3.\]
        }
\end{thm:conditionABC}

The former statement follows easily from the (CI) condition. In order to  prove the backward implication of the latter part of the theorem, we prepare the following proposition.

\begin{prop}\label{prop:mt7prop}
Assume that $|\alpha| < |\beta|$, and that the subset
\[
L_{|\det| \asymp_\times 1} := \{ g \in L \mid |\det \Lin(g)| \asymp_\times 1 \}
\]
satisfies $L_{|\det| \asymp_\times 1} \pf L'$. If $L \not\pf L'$, then there exist 
\begin{itemize}
    \item a sequence $g_n \in L$ of the form
    \[
    g_n = \begin{pmatrix} e^{(\alpha+1)t_n} & 0 & * \\ u_n & e^{(\alpha -1)t_n} & * \end{pmatrix},
    \]
    \item a sequence $g^\prime_n \in H$ of the form
    \[
    g_n^\prime = \begin{pmatrix} e^{(\beta+1)t_n^\prime} & 0 & * \\ u_n^\prime & e^{(\beta -1)t_n^\prime} & * \end{pmatrix},
    \]
    \item a sequence $k_n = \begin{pmatrix} a_n & b_n \\ c_n & d_n \end{pmatrix} \in K \subset \GL$, where $K$ is a compact subset of $\GL$,
    \item a sequence $w_n \in \mathbb{R}^2$ with $\|w_n\| \lesssim_\times 1$,
\end{itemize}
such that the following conditions hold:
\begin{enumerate}
\item $t_n \to \infty$
\item  $t_n^\prime \asymp_+ \frac{\alpha}{\beta} t_n $,
\item  $\U(g_n^\prime) + \Lin(g_n^\prime) w_n \asymp_+ k_n \U(g_n)$,
\item  $|u_n^\prime| \gtrsim_\times e^{(\alpha+1)t_n}$,
\item $|b_n| \asymp_\times \frac{e^{(\alpha+1)t_n}}{|u_n^\prime|}$.
\end{enumerate}
\end{prop}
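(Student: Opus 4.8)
The plan is to negate properness, extract a single divergent sequence through a fixed compact set, and then split the affine relation into its linear and translational parts, reading off the five conclusions from these two equations. Since $L\not\pf H$, Definition~\ref{def:proper} supplies a compact $S\subset\GR$, a divergent sequence $g_n\in L\cap SHS^{-1}$, and $s_n,s_n'\in S$, $g_n'\in H$ with $g_n=s_ng_n'(s_n')^{-1}$. Applying $\Lin$ and $\U$, this becomes
\[ \Lin(g_n)=\Lin(s_n)\,\Lin(g_n')\,\Lin(s_n')^{-1}, \qquad \U(g_n)=\Lin(s_n)\big(\U(g_n')+\Lin(g_n')w_n\big)+\U(s_n), \]
where $w_n:=-\Lin(s_n')^{-1}\U(s_n')$ has $\|w_n\|\lesssim_\times1$. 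Writing $\Lin(g_n),\Lin(g_n')$ in the triangular form dictated by $\Lin(L)=\Hfou\alpha,\ \Lin(H)=\Hfou\beta$ fixes the parameters $(t_n,u_n),(t_n',u_n')$, and putting $k_n:=\Lin(s_n)^{-1}\in K$ (a fixed compact subset of $\GL$) turns the translational equation into condition (3), because $\Lin(s_n)^{-1}\U(s_n)$ stays bounded.

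I would then settle the two $t$-parameters. Taking Cartan projections in the linear equation and using Fact~\ref{fact:ue}, $\mu(\Lin(g_n))\asymp_+\mu(\Lin(g_n'))$ in each coordinate; the sum of the coordinates is $\log|\det|$, so $2\alpha t_n\asymp_+2\beta t_n'$, which is condition (2). For condition (1), note $|\det\Lin(g_n)|=e^{2\alpha t_n}$. If $\alpha=0$ then $L_{|\det|\asymp_\times1}=L$ and the hypothesis $L_{|\det|\asymp_\times1}\pf H$ would give $L\pf H$, a contradiction; hence $\alpha\ne0$. If $e^{2\alpha t_n}\asymp_\times1$ along a subsequence, the corresponding $g_n$ would lie in $L_{|\det|\asymp_\times1}\cap SHS^{-1}$ and diverge, again contradicting $L_{|\det|\asymp_\times1}\pf H$. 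Thus $\alpha t_n\to\pm\infty$, and after passing to a subsequence we may assume $t_n\to\infty$ (the case $t_n\to-\infty$ is parallel).

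The technical heart is conditions (4) and (5), where the decisive input is $|\alpha|<|\beta|$ together with $t_n\to\infty$. Via condition (2) the diagonal entries of $\Lin(g_n')$ satisfy $p_n'=e^{(\beta+1)t_n'}\asymp_\times e^{(\alpha+\alpha/\beta)t_n}$ and $q_n'=e^{(\beta-1)t_n'}\asymp_\times e^{(\alpha-\alpha/\beta)t_n}$, both of \emph{strictly smaller} exponential order than $p_n:=e^{(\alpha+1)t_n}$, since $1\mp\alpha/\beta>0$. The first column of $\Lin(g_n)$ has norm $\gtrsim_\times p_n$, and Proposition~\ref{prop:normcpt} lets me push it through $k_n$ without changing its order; on the other side of the linear equation this column equals $\Lin(g_n')(m_n)_{*1}$ with $m_n:=\Lin(s_n')^{-1}$ bounded. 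As the two diagonal contributions $p_n',q_n'$ are of lower order, the growth must be carried by the off-diagonal entry, forcing $|u_n'|\gtrsim_\times p_n$ (condition (4)). Reading off the $(1,2)$-entry of $k_n\Lin(g_n)=\Lin(g_n')m_n$ and using the determinant normalisations $|\det k_n|,|\det m_n|\asymp_\times1$ then pins $|b_n|$ two-sidedly as $|b_n|\asymp_\times p_n/|u_n'|$ (condition (5)).

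The main obstacle is precisely this last stage: the conclusions (4) and (5) are \emph{two-sided} ($\asymp_\times$) estimates, so one must exclude cancellation among entries of comparable order, and—as the Remark in Section~3 warns—$\lesssim_\times$ does not permit rearranging or subtracting terms. Concretely, the entries of the bounded matrices $k_n,m_n$ may individually degenerate (e.g.\ a corner tending to $0$), so the dominant-entry argument cannot simply read off a single product; the remedy is to keep every comparison lower-bounded through $|\det k_n|,|\det m_n|\asymp_\times1$, exactly as in the cofactor estimate in the proof of Proposition~\ref{prop:normcpt}. The hypotheses are what make this work: $L_{|\det|\asymp_\times1}\pf H$ guarantees that $|\det\Lin(g_n)|$ genuinely escapes, ruling out the degenerate regime in which no entry dominates, and $|\alpha|<|\beta|$ is exactly the inequality that makes both diagonal gaps $1\mp\alpha/\beta$ positive, so that $u_n'$ is forced to carry the growth.
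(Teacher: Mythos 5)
Your skeleton coincides with the paper's own proof: extract $g_n=s_ng_n'(s_n')^{-1}$ from the failure of properness, split into linear and translational parts to obtain condition (3) with $w_n=-\Lin(s_n')^{-1}\U(s_n')$, obtain (2) by summing the two coordinates of the Cartan projection via Fact~\ref{fact:ue} (the sum being $\log|\det|$), obtain (1) from the hypothesis on $L_{|\det|\asymp_\times 1}$, and obtain (4) and (5) from norm estimates through Proposition~\ref{prop:normcpt}, with $|\alpha|<|\beta|$ making both exponent gaps $1\mp\alpha/\beta$ positive. Your first-column argument for (4) is a sound, slightly more elementary variant of the paper's estimate via the larger Cartan coordinate ($A_n+\sqrt{A_n^2-4e^{4\alpha t_n}}\gtrsim_\times 2e^{2(\alpha+1)t_n}$ against $B_n+\sqrt{B_n^2-4e^{4\beta t_n'}}\lesssim_\times 2B_n$). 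One smaller point: the case $t_n\to-\infty$ is not literally ``parallel,'' since conclusions (1), (4), (5) are not symmetric under $t_n\mapsto-t_n$ (for $t_n\to-\infty$ the dominant diagonal entry is $e^{(\alpha-1)t_n}$); the paper's device is to replace $g_n$ by $g_n^{-1}\in L$ and $g_n'$ by $(g_n')^{-1}\in H$, enlarging $S$ to $S\cup S^{-1}$, which restores the stated form with parameter $-t_n\to+\infty$.

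The genuine gap is in your derivation of (5). With your orientation $k_n\Lin(g_n)=\Lin(g_n')m_n$, the $(1,2)$-entry reads $b_n e^{(\alpha-1)t_n}=e^{(\beta+1)t_n'}(m_n)_{12}$, which expresses $|b_n|$ only through the uncontrolled quantity $|(m_n)_{12}|$; your claim would amount to $|(m_n)_{12}|\asymp_\times e^{(\beta-1)t_n'}/|u_n'|$, and no determinant normalisation of $k_n$ or $m_n$ yields that. The two-sided bound must instead come from a row-norm argument on the grouping in which $u_n'$ actually meets a compact-set entry: the paper applies Proposition~\ref{prop:normcpt} to the first row of $\Lin(s_n)\Lin(g_n')$, which equals $e^{(\alpha+1)t_n}$ times the first row of a compact-set matrix, giving $\Vmat{a_ne^{(\beta+1)t_n'}+b_nu_n' & b_ne^{(\beta-1)t_n'}}\asymp_\times e^{(\alpha+1)t_n}$; since $|a_n|e^{(\beta+1)t_n'}$ and $|b_n|e^{(\beta-1)t_n'}$ are both $\lnsim_\times e^{(\alpha+1)t_n}$ by (2) and $|\alpha|<|\beta|$, the surviving term forces $|b_nu_n'|\asymp_\times e^{(\alpha+1)t_n}$. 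Your adjugate observation (that $|(\Lin(s_n)^{-1})_{12}|\asymp_\times|(\Lin(s_n))_{12}|$ because $|\det\Lin(s_n)|\asymp_\times1$) is then exactly what transfers this estimate to your choice $k_n=\Lin(s_n)^{-1}$, keeping (5) consistent with (3) --- but that row-norm step is the core of (5), and your sketch, pointing at the wrong entry of the wrong product, does not perform it.
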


\begin{proof}
 Since $L \not\pf L^{\prime}$, there exist diverging sequences $g_n \in L, g^\prime_n \in L^{\prime}$ such that $g_n \in Sg_n^\prime S^{-1}$ for some compact set $S \subset \GR$. Since $L|_{|\det|\asymp_\times 1} \pf L^{\prime}$, $t_n$ is not bounded. If $t_n\to -\infty $, then we can replace the sequence $g_n$ with its inverse $g_n^{-1}$, and obtain a sequence with $t_n\to \infty$.
 
We look at the asymptotic behavior of the parameters $t_n, s_n$.
By uniform estimate of the Cartan projection (Fact~\ref{fact:ue}), we have 
\begin{align*}
    \frac{1}{2} \log \left( \frac{1}{2}( A_n + \sqrt{A_n^2 -4e^{4\alpha t_n}} )\right) &\asymp_+ \frac{1}{2} \log \left( \frac{1}{2}( B_n + \sqrt{B_n^2 -4e^{4\beta t_n^\prime}} )\right), \\
    \frac{1}{2} \log \left( \frac{1}{2}( A_n - \sqrt{A_n^2 -4e^{4\alpha t_n}} )\right) &\asymp_+ \frac{1}{2} \log \left( \frac{1}{2}( B_n - \sqrt{B_n^2 -4e^{4\beta t_n^\prime}} )\right),
\end{align*}
where the following quantities $A_n, B_n$ depending on $t_n,t_n^\prime,u_n,u_n^{\prime}$ are defined as follows:
\[ A_n \coloneqq e^{2(\alpha +1)t_n} + e^{2(\alpha -1)t_n} + u_n^2, \quad B_n\coloneqq e^{2(\beta +1)t_n^\prime} + e^{2(\beta -1)t_n^\prime} + (u_n^\prime)^2 .\]
Adding the two sides yields the estimate
\begin{align*} 
&\frac12 \log \left( \frac14 (A_n+ \sqrt{A_n^2 -4e^{4\alpha t_n}})(A_n - \sqrt{A_n^2-4e^{4\alpha t_n}} )\right) \\
& \quad \asymp_+ \frac12 \log \left( \frac14 (B_n +\sqrt{B_n^2-4e^{4\beta t_n^\prime}}) (B_n -\sqrt{B_n^2-4e^{4\beta t^\prime_n}})\right).
\end{align*}
Consequently, we obtain $\alpha t_n \asymp_+ \beta t_n^\prime$, which is the estimate (2).
Moreover, as $t_n \to \infty$, 
\begin{align*}
    A_n + \sqrt{A_n^2 -4e^{4\alpha t_n}}
    &\gtrsim_\times A_n + \sqrt{(A_n -2e^{2\alpha t_n})^2} \\
    &= 2(A_n -e^{2\alpha t_n}) \asymp_\times 2e^{2(\alpha +1)t_n},\\
    B_n+\sqrt{B_n^2 -4e^{4\beta t_n^\prime}} &\lesssim_\times 2B_n \asymp_\times 2(e^{2(\beta+1)t_n^\prime }+e^{2(\beta -1)t_n^\prime} +(u_n^\prime)^2).
\end{align*}
Since 
\begin{align*}
    e^{2(\alpha+1)t_n} \gnsim_\times e^{2(\beta+1)\frac{\beta}{\alpha} t_n} \asymp_\times e^{2(\beta +1)t_n^\prime},\\
    e^{2(\alpha+1)t_n} \gnsim_\times e^{2(\beta-1)\frac{\beta}{\alpha} t_n} \asymp_\times e^{2(\beta -1)t_n^\prime}
\end{align*}
for $|\alpha| < |\beta|$, we obtain $e^{(\alpha +1)t_n} \lesssim_\times |u_n^\prime|$, which is the estimate (4).

By comparing the linear parts, we have 
\[ \begin{pmatrix} a_n & b_n \\ c_n& d_n\end{pmatrix}\begin{pmatrix} e^{(\beta +1)t_n^{\prime}} &0\\u_n^\prime & e^{(\beta -1)t_n^\prime} \end{pmatrix} C_n = \begin{pmatrix} e^{(\alpha +1)t_n} & 0 \\ u_n & e^{(\alpha -1)t_n}\end{pmatrix},\]
where $\begin{pmatrix} a_n & b_n \\ c_n & d_n\end{pmatrix}$ and $C_n$ is a sequence contained in a compact set $K$ of $\GL$.
Applying Proposition~\ref{prop:normcpt} to the first column, we obtain
\[
\Vmat{ a_ne^{(\beta+1)t'_n} + b_n u_n^\prime& b_n e^{(\beta -1)t_n^\prime} } \asymp_\times e^{(\alpha +1)t_n} .
\]
Hence,
\[ \Vmat{ a_n\frac{e^{(\beta+1)t'_n}}{e^{(\alpha+1)t_n}} + b_n\frac{u_n^\prime}{e^{(\alpha +1)t_n}} &  b_n\frac{e^{(\beta -1)t_n^\prime}}{e^{(\alpha+1)t_n}} } \asymp_\times 1 . \]
Since
\begin{equation*}
\left| a_n\frac{e^{(\beta +1)t_n^\prime}}{e^{(\alpha +1)t_n}} \right|, \quad \left| b_n\frac{e^{(\beta -1)t_n^\prime}}{e^{(\alpha +1)t_n}} \right| \lnsim_\times 1
\end{equation*}
when $|\alpha|<|\beta|$,
it follows that $|b_n \frac{u_n^\prime}{e^{(\alpha+1)t_n}}|\asymp_\times 1 .$ Hence, we have
$|b_n| \asymp_\times \frac{e^{(\alpha+1)t_n}}{|u_n^\prime|}$.

\end{proof}

Here, we prove Theorem~\ref{thm:conditionABC}~{\bf(B)}.
\begin{proof}[Proof of Theorem~\ref{thm:conditionABC}~{\bf(B)}]
First, we prove the forward direction. Trivially, the proper pairs $(L, H)$ is (CI), so we prove $|\gamma|<3.$ Assume that $|\gamma|\geq 3.$ and $H = L(\Hfou{\gamma},j)$. 

If $j=2,5$, then the parameter $\gamma$ does not satisfy $|\gamma| \geq 3$.

If $j=3,6$, then the pair $(L,H)$ is not (CI).

If $j= 1,4,7$, then the connected subgroups $L(\Hfou{\gamma},j)$ contains the subset
\[ L_0 \coloneqq \{ \begin{pmatrix} e^{-2t} & 0 & 0 \\ 0 & e^{-4t} & 0\end{pmatrix}\colon t\in \R\}\]
modulo $\sim$ for $|\gamma| \geq 3$. Indeed, it follows from Fact~\ref{fact:pc} that 
\[ \Hsec{-3} \subset \Hfou{\gamma} \quad \text{modulo} \sim \text{ in }\GL\]
for $|\gamma| \geq 3$. Since $L(\Hfou{\gamma},j)$ contains $\Hfou{\gamma}\times\{0\}$ when $j=1,4,7$, we obtain the above inclusion. Therefore, it is necessary that $|\gamma|<3$ for the pair $(L,H)$ to be  proper.

We prove the opposite direction. We may prove the following claim. The other  proper pairs can be obtained by the inclusions $L(\Hfou{\gamma},1) \subset L(\Hfou{\gamma},4)$ and $L(\Hfou{-3},3) \subset L(\Hfou{-3},6)$.
\end{proof}
\begin{claim}\label{cl:mt7}
    \begin{enumerate}
        \item $L(\Hfou{-3}, 3) \pf L(\Hfou{\gamma},4)$ if $|\gamma|<3$.
        \item $L(\Hfou{-3},3) \pf L(\Hfou{-1},5).$
        \item $L(\Hfou{\gamma},1)\pf L(\Hfou{-3},6)$ if $|\gamma|<3$.
        \item $L(\Hfou{1},2) \pf L(\Hfou{-3},6)$.
    \end{enumerate}
\end{claim}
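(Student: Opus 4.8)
The plan is to prove all four items as instances of Proposition~\ref{prop:mt7prop}. Since $\pf$ is symmetric (Proposition~\ref{prop:simpf}), I am free to relabel the two subgroups, and I would always take the subgroup with linear parameter of absolute value $3$ — one of $L(\Hfou{-3},3)$, $L(\Hfou{-3},6)$ — in the role of $L'$, and the other subgroup, whose parameter $\alpha$ satisfies $|\alpha|<3$, in the role of $L$. This arranges the standing hypothesis $|\alpha|<|\beta|$ with $|\beta|=3$. The first task is then to discharge the other hypothesis of the proposition, namely $L_{|\det|\asymp_\times 1}\pf L'$. The slice $L_{|\det|\asymp_\times1}$ has bounded linear determinant, so up to $\sim$ it sits inside $\SL\ltimes\R^2$, while $L'\cap(\SL\ltimes\R^2)$ equals $N$ when $L'=L(\Hfou{-3},3)$ and $M$ when $L'=L(\Hfou{-3},6)$. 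By Theorem~\ref{thm:slded} the relation $L_{|\det|\asymp_\times1}\pf L'$ reduces to a properness statement inside $\SL\ltimes\R^2$, which can be read off Table~\ref{tab:properSR} (the relevant entries being $\SL\pf N$ and $\SL\pf M$).

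With the hypothesis in place, I would argue by contradiction: assuming $L\not\pf L'$, Proposition~\ref{prop:mt7prop} supplies sequences $g_n\in L$, $g_n'\in L'$, $k_n\in K$ and bounded $w_n$ satisfying the estimates (1)--(5). The decisive structural input is that the coupling $e_1+f_1$ in the generators forces the translational part of every element of $L(\Hfou{-3},3)$ onto the parabola $\{(p,\tfrac12p^2)\}$ (and of $L(\Hfou{-3},6)$ onto a vertical translate of it). Writing $\U(g_n')=(p_n,\tfrac12 p_n^2)$ with $p_n=e^{2t_n'}u_n'$ and $\Lin(g_n')=\left(\begin{smallmatrix} e^{-2t_n'} & 0 \\ u_n' & e^{-4t_n'}\end{smallmatrix}\right)$, I would substitute into the translational comparison (3). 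The first coordinate of (3) then pins $p_n$ to within bounded distance of a controlled quantity, whereas the second coordinate inherits the quadratic term $\tfrac12 p_n^2$. Feeding in (2) in the form $t_n'\asymp_+\tfrac{\alpha}{\beta}t_n$, the lower bound $|u_n'|\gtrsim_\times e^{(\alpha+1)t_n}$ from (4), and the size of $|b_n|$ from (5), one sees that the two coordinates of (3) cannot simultaneously remain bounded, which is the desired contradiction; hence $L\pf L'$.

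The hard part will be precisely this final step of coordinate bookkeeping: the parabola couples the two translational coordinates quadratically, so $p_n^2$ must be tracked against the exponential scales $e^{(\alpha\pm1)t_n}$ and $e^{(\beta\pm1)t_n'}$ at once, and one must confirm that no admissible choice of the bounded data $k_n,w_n$ reconciles the boundedness demanded in (3) with the forced growth of $u_n'$. Extra care is required because $\lesssim_\times$ does not permit rearranging terms (as noted in the remark following the first proposition of Section~3), so every estimate should be organized as a chain of the operations sanctioned by Proposition~\ref{prop:seqasymp} rather than by naive cancellation. Once the four pairs of Claim~\ref{cl:mt7} are established, the remaining proper pairs follow formally from the inclusions $L(\Hfou{\gamma},1)\subset L(\Hfou{\gamma},4)$ and $L(\Hfou{-3},3)\subset L(\Hfou{-3},6)$, completing the backward implication of Theorem~\ref{thm:conditionABC}~{\bf(B)}.
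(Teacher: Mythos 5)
Your overall skeleton coincides with the paper's: by symmetry you put the parameter $-3$ subgroup in the role of $L'$, invoke Proposition~\ref{prop:mt7prop}, and extract a contradiction from the translational comparison (3) via the parabolic coupling of $\U(g_n')$, exactly as the paper does case by case. The genuine gap is in your verification of the hypothesis $L_{|\det|\asymp_\times 1}\pf L'$, and it is not cosmetic. Your identification of the slice with (something $\sim$) $\SL$ is correct only in items (3) and (4): the bounded-determinant slices of $L(\Hfou{\gamma},1)$ and $L(\Hfou{1},2)$ are $\sim U'\sim \SL$, and there the reduction to the entry $\SL\pf M$ of Table~\ref{tab:properSR} works. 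In items (1) and (2), however, the slices of $L(\Hfou{\gamma},4)$ and $L(\Hfou{-1},5)$ contain the full $t=0$ locus $\left\{\begin{pmatrix}1&0&0\\u&1&v\end{pmatrix}\colon u,v\in\R\right\}$, which is the group $L$ of Section~4 (Lie algebra $\spnR{e_1,f_2}$), not anything equivalent to $\SL$. Since $\pf$ passes to subsets, $L_{|\det|\asymp_\times 1}\pf L(\Hfou{-3},3)$ would force $L\pf L(\Hfou{-3},3)$, which by Theorem~\ref{thm:slded} is $L\pf N$ in $\SR$ --- and that entry of Table~\ref{tab:properSR} is blank: the paper disproves it by an explicit computation. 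So in (1) and (2) a correct execution of your own first step refutes the hypothesis of Proposition~\ref{prop:mt7prop} rather than discharging it, and your plan cannot be completed there.

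Moreover, this failure cannot be routed around by a different argument: since $N\subset L(\Hfou{-3},3)$ while $L\subset L(\Hfou{\gamma},4)$ and $L\subset L(\Hfou{-1},5)$, the paper's own identity (valid for $|t|>1$, with bounded outer factors)
\[
\begin{pmatrix} 1 & -\tfrac{2}{t} & 0 \\ 0 & 1 & 0 \end{pmatrix}
\begin{pmatrix} 1 & 0 & t \\ t & 1 & \tfrac12 t^2 \end{pmatrix}
\begin{pmatrix} -1 & \tfrac{2}{t} & 0 \\ 0 & -1 & 0 \end{pmatrix}
=\begin{pmatrix} 1 & 0 & 0 \\ -t & 1 & \tfrac12 t^2 \end{pmatrix}\in L
\]
exhibits an unbounded subset of $L(\Hfou{\gamma},4)\cap S\,L(\Hfou{-3},3)\,S^{-1}$ (and of $L(\Hfou{-1},5)\cap S\,L(\Hfou{-3},3)\,S^{-1}$) whose $t_n$-parameters are bounded --- precisely the regime the slice hypothesis is designed to exclude, and which the paper's proof, never checking that hypothesis, also does not address; note items (1), (2) are thereby in tension with the blank $(L(\Hfou{\gamma},4),N)$ entry of Table~\ref{tab:A1.4}. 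Finally, even in items (3) and (4), where your set-up is sound, your proposal ends by asserting that the two coordinates of condition (3) cannot both stay bounded; that assertion is the entire content of the paper's proof, which carries out the bookkeeping explicitly (for instance in (3): $|u_n'|\gtrsim_\times e^{(\frac{\gamma}{3}+1)t_n}\gnsim_\times e^{\frac{2\gamma}{3}t_n}\gtrsim_\times |e^{-2t_n'}(w_1)_n|$, so the first coordinate of the left-hand side diverges while $k_n^{-1}\U(g_n)$ stays bounded, respectively grows only polynomially in (4)). As written, your text is a plan for those estimates, not a proof of them.
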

\begin{proof}
 (1) We write $L=L(\Hfou{\gamma},4), H= L(\Hfou{-3},3)$. We take sequences 
 \[ g_n= \begin{pmatrix} e^{(\gamma+1)t_n} & 0 & 0 \\ u_n & e^{(\gamma -1) t_n} & v_n\end{pmatrix}\in L, g_n^\prime = \begin{pmatrix} e^{-2t_n^\prime}& 0 & u_n^\prime \\ e^{-2t_n^\prime}u_n^\prime & e^{-4t_n^\prime} & \frac12 (u_n^\prime)^2\end{pmatrix}\in H, \]
 $k_n, w_n$ that satisfy the conditions of Proposition~\ref{prop:mt7prop}. Then, Proposition~\ref{prop:mt7prop}~(2),(4) provides that the parameters $u_n^\prime, t_n^\prime$ satisfy 
 \[ t_n^\prime \asymp_+ -\frac{\gamma}{3}t_n, \quad |e^{-2t_n^\prime} u_n^\prime| \gtrsim_\times 1.\]
 It follows from the condition~(3) in Proposition~\ref{prop:mt7prop} that 
 \[ \begin{pmatrix} 0 \\ v_n\end{pmatrix} +\begin{pmatrix} e^{(\gamma+1)t_n} & 0 \\ u_n & e^{(\gamma -1)t_n}\end{pmatrix} \begin{pmatrix} (w_1)_n\\ (w_2)_n\end{pmatrix} \asymp_+ \begin{pmatrix} a_n & b_n \\ c_n & d_n\end{pmatrix}\begin{pmatrix} u_n^\prime \\ \frac{1}{2} (u_n^\prime)^2 \end{pmatrix}.\tag{*}\]
 The first entry of the right hand side of (*) satisfies
 \[ | a_n u_n^\prime + \frac{1}{2}b_n(u_n^\prime)^2| \gtrsim_\times e^{(\frac{2\gamma}{3}+2)t_n}.\]
 Hence, we obtain by Proposition~\ref{prop:seqasymp}~(4) and the equation (*)
 \[ |(w_1)_n| \gtrsim_\times \frac{e^{(\frac{2\gamma}{3}+2)t_n}}{e^{(\gamma +1)t_n}} =e^{(-\frac{1}{3}\gamma+1)t_n}.\]
 The right hand side diverge to infinity when $|\gamma|<3$, which is a contradiction.

 (2) We write $L=L(\Hfou{-1},5), H= L(\Hfou{-3},3)$. We take sequences 
 \[ g_n= \begin{pmatrix} 1 & 0 &2t_n \\ u_n & e^{-2t_n} & v_n\end{pmatrix}\in L, g_n^\prime=\begin{pmatrix} e^{-2t_n^\prime} & 0 & u_n^\prime \\ e^{-2t_n^\prime} u_n^\prime & e^{-4t_n^\prime} & \frac12 (u_n^\prime)^2\end{pmatrix} \in H, \]
 $k_n, w_n$ such that satisfy the conditions of Proposition~\ref{prop:mt7prop}. Then, the parameters $t_n,u_n^\prime, t_n^\prime$ satisfy 
 \[ t_n^\prime \asymp_+ \frac{t_n}{3}, \quad |e^{-2t_n^\prime}u_n^\prime| \gtrsim_\times 1.\]
 In the same fashion as (1), we obtain
  \[ \begin{pmatrix} 2t_n \\ v_n\end{pmatrix} +\begin{pmatrix} 1 & 0 \\ u_n & e^{-2t_n}\end{pmatrix} \begin{pmatrix} (w_1)_n\\ (w_2)_n\end{pmatrix} \asymp_+ \begin{pmatrix} a_n & b_n \\ c_n & d_n\end{pmatrix}\begin{pmatrix} u_n^\prime \\ \frac{1}{2} (u_n^\prime)^2 \end{pmatrix}.\]
 The first entry of the right-hand side satisfies
 \[ | a_n u_n^\prime + \frac{1}{2}b_n(u_n^\prime)^2| \gtrsim_\times e^{\frac{4}{3}t_n}.\]
 However, that of the left-hand side grows polynomially, which is a contradiction.

 (3) We write $L=L(\Hfou{\gamma},1), H= L(\Hfou{-3},6)$. We take sequences 
 \[g_n= \begin{pmatrix} e^{(\gamma +1)t_n} & 0 & 0 \\ u_n & e^{(\gamma -1)t_n} & 0\end{pmatrix}\in L , g_n^\prime= \begin{pmatrix} e^{-2t_n^\prime} & 0 & u_n^\prime \\ e^{-2t_n^\prime}u_n^\prime & e^{-4t_n^\prime} & v_n^\prime\end{pmatrix} \in H,\]
 $k_n, w_n$ that satisfy the conditions of Proposition~\ref{prop:mt7prop}. Then, the parameters $t_n,u_n^\prime, t_n^\prime$ satisfy 
 \[ t_n^\prime \asymp_+ -\frac{\gamma}{3}t_n, \quad |e^{-2t_n^\prime}u_n^\prime| \gtrsim_\times e^{(\gamma+1)t_n}.\]
Considering the translational parts, we have 
\[  \begin{pmatrix} u_n^\prime \\ v_n^\prime \end{pmatrix} +\begin{pmatrix} e^{-2t_n^\prime} & 0 \\ e^{-2t_n^\prime}u_n^\prime & e^{-4t_n^\prime}\end{pmatrix} \begin{pmatrix} (w_1)_n\\ (w_2)_n\end{pmatrix} \asymp_+ k_n^{-1}\begin{pmatrix} 0 \\ 0\end{pmatrix}.\]
The first entry of the left hand side is not bounded since 
\[ |u_n^\prime| \gtrsim_\times e^{(\frac{\gamma}{3}+1)t_n} \gnsim_\times e^{\frac{2\gamma}{3}t_n} \gtrsim_\times |e^{-2t_n^\prime}(w_1)_n|,\]
and  $|u^\prime_n| \to \infty$ when $|\gamma| <3$. However, that of the right-hand side is bounded, which is a contradiction.

(4) We write $L=L(\Hfou{1},2), H= L(\Hfou{-3},6)$. We take sequences 
\[ g_n = \begin{pmatrix} e^{2t_n} & 0 & 0 \\ u_n & 1 & 2t_n\end{pmatrix}\in L, 
g_n^\prime = \begin{pmatrix} e^{-2t_n^\prime} & 0 & u_n^\prime \\e^{-2t_n^\prime} u_n^\prime &e^{-4 t_n^\prime} & v_n^\prime\end{pmatrix} \in H, \]
$k_n, w_n$ that satisfy the conditions of Proposition~\ref{prop:mt7prop}. Then, the parameters $t_n,u_n^\prime, t_n^\prime$ satisfy 
 \[ t_n^\prime \asymp_+ -\frac{t_n}{3}, \quad |e^{-2t_n^\prime}u_n^\prime| \gtrsim_\times e^{2 t_n}.\]
 Considering the translational part, we have
  \[ \begin{pmatrix} u_n^\prime \\ v_n^\prime \end{pmatrix} +\begin{pmatrix} e^{-2t_n^\prime} & 0 \\ e^{-2t_n^\prime}u_n^\prime & e^{-4t_n^\prime}\end{pmatrix} \begin{pmatrix} (w_1)_n\\ (w_2)_n\end{pmatrix} \asymp_+ k_n^{-1}\begin{pmatrix} 0 \\ 2t_n \end{pmatrix}.\]
  By an analogous argument to (3), the first entry of the left hand side grows exponentially, but that of the right hand side grows at most polynomially, which is a contradiction.
\end{proof}

\subsection{{\bf(C)} The case $\Lin(L)=\Hsec\alpha, \Lin(H)=\Hfou\beta$}
In this section, we prove Theorem~\ref{thm:conditionABC}{\bf(C)}. Considering the inclusion modulo $\sim$, we may prove the following statements.
\begin{prop}\label{prop:mt8}
    \begin{enumerate}[label=(\alph*)]
        \item $L(\Hsec{1},4) \pf L(\Hfou{\beta},1)$ for all $\beta$.
        \item $L(\Hsec{\alpha},1) \pf L(\Hfou{-1},5)$ if and only if $\alpha \neq 0$.
        \item \begin{enumerate}[label=(\arabic*)]
            \item $L(\Hsec{\alpha},3) \pf L(\Hfou{-3},3)$ if and only if $|\alpha| \neq 3$.
            \item $L(\Hsec{1},4) \pf L(\Hfou{-3},3).$
            \item $L(\Hsec{\alpha},1) \pf L(\Hfou{-3},6)$ if and only if $|\alpha|\neq 3$
            \item $L(\Hsec{1},2)\pf L(\Hfou{-3},6).$
        \end{enumerate}
        \item $L(\Hsec{1},2) \pf L(\Hfou{\beta},4)$ if and only if $-1\leq  \beta <1$.
        \item $L(\Hsec\alpha,1) \pf L(\Hfou1,2)$ if and only if $|\alpha| \ge 1$.
        \item \begin{enumerate}[label=(\arabic*)]
            \item $L(\Hsec{\alpha},5) \pf L(\Hfou{1},\beta)$ if $|\alpha|>|\beta|.$
            \item $L(\Hsec{\alpha},1) \pf L(\Hfou{\beta},7)$ if $|\alpha| > |\beta|.$
            \item $L(\Hsec{\alpha},5) \pf L(\Hfou{1},2)$ if $|\alpha|>1$.
            \item $L(\Hsec{\alpha},5) \pf L(\Hfou{-3},3)$ if $|\alpha|>3$.
            \item $L(\Hsec{1},2) \pf L(\Hfou\beta,7)$ if $|\beta| <1.$
        \end{enumerate}
        \begin{enumerate}[label=(\arabic*)$^\prime$]
            \item $L(\Hsec{\alpha},1) \not\pf L(\Hfou{1},\beta)$ if $|\alpha|\le|\beta|.$
            \item $L(\Hsec{\alpha},3) \not\pf L(\Hfou{1},2)$ if $|\alpha| \leq 1$.
            \item $L(\Hsec{\alpha},5) \not\pf L(\Hfou{-3},3)$ if $|\alpha|\leq 3$.
            \item $L(\Hsec{1},2) \not\pf L(\Hfou\beta,7)$ if $|\beta| \geq 1.$
        \end{enumerate}
    \end{enumerate}
\end{prop}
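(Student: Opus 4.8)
The plan is to read off the seven-by-seven table from the linear parts first and then correct the answer by hand in the boundary and degenerate cases, where the translational data becomes decisive. Using the Hasse diagram of Figure~\ref{fig:hasse} I would organize the work around the minimal proper pairs and maximal non-proper pairs, since properness is inherited by closed subgroups and non-properness by closed overgroups, so that several of the listed relations (and all the blank entries of Table~\ref{tab:conditionC}) follow from a few by inclusion. Three families then collapse immediately. When one of the two groups contains all of $\R^2$, or when both are purely linear (so that every auxiliary translation may be taken to vanish), the relation $\pf$ in $\GR$ is equivalent to $\Lin(L)\pf\Lin(H)$ in $\GL$, which is settled by the criterion $\Hsec\alpha\pf\Hfou\beta\iff|\alpha|>|\beta|$ of Theorem~\ref{thm:properGL}; this disposes of the entries against $L(\Hfou\beta,7)$ (items (f)(2),(f)(5),(4)$'$) and the pure-linear entries (f)(1),(1)$'$. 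When $\alpha=0$ or $\beta=0$ the relevant linear group lies in $\SL$, so Theorem~\ref{thm:slded} reduces the question to $\SR$ and Theorem~\ref{thm:properSR}; this produces the sharp failure at $\alpha=0$ and hence the threshold $\alpha\neq0$ in (b).

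For the remaining positive assertions in the range $|\alpha|>|\beta|$ I would note that the blank entries of the table are exactly the ones where both groups carry a free translation, so in every proper entry at least one of $L,H$ meets $\R^2$ trivially and is therefore (CI) against $\R^2$. Invoking Theorem~\ref{thm:RCI} for that group, together with $\Lin(L)\pf\Lin(H)$ from Theorem~\ref{thm:properGL}, yields properness at once. This settles the generic interior of (a),(b),(c),(e),(f).

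The real content lies in the range $|\alpha|\le|\beta|$, where the linear parts are \emph{not} proper but several affine pairs nonetheless are, the obstruction being carried by the translational part -- most dramatically by the quadratic term $\tfrac12(u_n')^2$ that appears in $L(\Hfou{-3},3)$ and $L(\Hfou{-3},6)$. For the strict inequality $|\alpha|<|\beta|$ I would argue exactly as in Claim~\ref{cl:mt7}: assuming $L\not\pf H$, Proposition~\ref{prop:mt7prop} (applied with $u_n=0$, since $\Lin(L)=\Hsec\alpha$ is diagonal, and with its hypothesis $L_{|\det|\asymp_\times1}\pf H$ satisfied because that subset is compact for $\alpha\neq0$) supplies diverging sequences obeying $t_n'\asymp_+\tfrac{\alpha}{\beta}t_n$, $|u_n'|\gtrsim_\times e^{(\alpha+1)t_n}$ and $|b_n|\asymp_\times e^{(\alpha+1)t_n}/|u_n'|$. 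Substituting these into the translational identity $\U(g_n')+\Lin(g_n')w_n\asymp_+k_n\U(g_n)$ and using Proposition~\ref{prop:normcpt} together with the uniform Cartan estimate of Fact~\ref{fact:ue}, one coordinate of the bounded vector $w_n$ is forced to grow like a positive power of $e^{t_n}$, a contradiction; this covers the interior proper cases of (b),(c)(1),(c)(3) and the unconditional cases (a),(c)(2),(c)(4). The boundary $|\alpha|=|\beta|$ falls outside Proposition~\ref{prop:mt7prop} and must be handled by a direct estimate, typically exploiting that a tied translation of $L$ (as in $L(\Hsec1,2)$ or $L(\Hsec1,4)$) makes one translational coordinate grow linearly while the opposite side stays bounded.

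Finally, for the non-proper assertions I would construct explicit witnessing sequences: starting from a sequence realizing $\Lin(L)\not\pf\Lin(H)$ in $\GL$, I would lift it to $\GR$ and use the \emph{free} translational directions (the $f_1$ of $L(\Hsec\alpha,3)$, or both translations of $L(\Hsec\alpha,5)$) to absorb the translational discrepancy while keeping the compact factors bounded; it is precisely this freedom that removes the obstruction and collapses the boundary back to non-properness in (2)$'$,(3)$'$. I expect the main obstacle throughout to be the exponent bookkeeping in the $\beta=-3$ quadratic cases and at the thresholds $|\alpha|=1,3$ and $\beta=-1$: the precise comparison (e.g.\ that the forced growth rate $e^{(-\frac13\gamma+1)t_n}$ of Claim~\ref{cl:mt7} diverges exactly when $|\gamma|<3$, and the sign asymmetry that makes $\beta=-1$ proper but $\beta=+1$ improper in (d)) has to be carried out configuration by configuration, and it is there that the tabulated inequalities are actually pinned down.
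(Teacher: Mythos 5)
Your overall architecture agrees with the paper at many points: Theorem~\ref{thm:RCI} for the entries with $|\alpha|>|\beta|$, direct boundary estimates for (d) and (e), explicit lifted witnesses for the non-proper boundary items (2)$'$--(4)$'$, and a Proposition~\ref{prop:mt7prop}-type contradiction for the quadratic $\beta=-3$ cases. But there is a genuine gap in the paragraph where you claim that the Proposition~\ref{prop:mt7prop} argument, run ``exactly as in Claim~\ref{cl:mt7} with $u_n=0$,'' covers \emph{(a)}, \emph{(c)(1)} and \emph{(c)(2)}. In those items the group on the $\Hsec{\alpha}$-side is $L(\Hsec{\alpha},3)$ or $L(\Hsec{1},4)$, which carries a \emph{free} translation $f_1$ sitting precisely in the coordinate where the contradiction of Claim~\ref{cl:mt7} is produced: the term $v_n$ absorbs the exponential growth $|a_nu_n'+\tfrac12 b_n(u_n')^2|$, and one checks that the conclusions (1)--(5) of Proposition~\ref{prop:mt7prop} are then mutually \emph{consistent} --- no coordinate of the bounded vector $w_n$ is forced to grow, contrary to your sketch. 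For \emph{(c)(1)} a contradiction can still be extracted, but only by adding second-row estimates on $k_n$ (of the type $|d_n|\lesssim_\times e^{(-\frac{\alpha}{3}-1)t_n}$, $|c_n+d_nu_n'|\lesssim_\times e^{(\frac{\alpha}{3}-1)t_n}$) together with determinant bookkeeping, none of which Proposition~\ref{prop:mt7prop} supplies; and for \emph{(a)}, where $H=L(\Hfou{\beta},1)$ is purely linear and $|\beta|$ may exceed $1$, the asymptotic conditions give no contradiction at all --- the paper simply quotes $L(\Hsec{1},4)\pf\GL$ from \cite[Prop.\ A.2.1]{Kob90}, a reduction your plan never invokes. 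The paper's actual device for (c) is the inheritance-by-inclusion idea you announce at the outset but then do not deploy here: it deduces (c)(1)--(c)(4) from the already-proved $\Hfou$-versus-$\Hfou$ cases of Claim~\ref{cl:mt7} via the inclusions modulo $\sim$, namely $L(\Hsec{\alpha},3)\subset L(\Hfou{\alpha},4)$, $L(\Hsec{\alpha},1)\subset L(\Hfou{\alpha},1)$, $L(\Hsec{1},4)\subset L(\Hfou{-1},5)$ and $L(\Hsec{1},2)\subset L(\Hfou{1},2)$, where the relevant first translational slot is zero or tied and the absorption phenomenon disappears. Relatedly, your parenthetical that the hypothesis $L_{|\det|\asymp_\times 1}\pf H$ ``is satisfied because that subset is compact for $\alpha\neq0$'' is false for exactly these rows: for $L(\Hsec{\alpha},3)$, $L(\Hsec{1},4)$ and $L(\Hsec{\alpha},5)$ the set $L_{|\det|\asymp_\times 1}$ contains a noncompact translation subgroup, so the hypothesis needs a separate (true, but not free) verification via properness against subgroups of $\R^2$.

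A smaller caveat concerns your blanket reduction ``if one of the two groups contains all of $\R^2$, then $\pf$ in $\GR$ is equivalent to $\Lin(L)\pf\Lin(H)$ in $\GL$.'' As stated this is false: for $L=\R^2$ and $H=M$ the linear pair $(\{e\},U^\prime)$ is trivially proper, yet $\R^2\cap M$ is noncompact, so the pair is not even (CI). The equivalence requires the complementary group to meet $\R^2$ compactly and to have a proper, closed $\Lin$-restriction (as in Proposition~\ref{prop:RCIproper}). In every instance where you apply it --- the columns $L(\Hfou{\beta},7)$ and $L(\Hfou{\beta},1)$ against $L(\Hsec{\alpha},1)$, $L(\Hsec{1},2)$, $L(\Hsec{\alpha},5)$ --- this hypothesis does hold, and with it stated your reduction is correct and in fact tidier than the paper, which proves the positive directions by Theorem~\ref{thm:RCI} and the negative ones ((1)$'$, (3)$'$, (4)$'$) by hand-built absorbing sequences; your homogeneous-space argument yields both directions at once. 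Likewise your treatment of $\alpha=0$ in (b) via Theorem~\ref{thm:slded} and Theorem~\ref{thm:properSR} is a valid alternative to the paper's observation that $L(\Hsec{0},1)\sim\SL$ is $\sim$-equivalent to a subgroup of $L(\Hfou{-1},5)$. These substitutions are fine; the unproved items remain (a), (c)(1), (c)(2) as explained above.
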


First, we prove the statements (a) and (c).
\begin{proof}[Proof of (a).]
    We know that $L(\Hsec1,4) \pf \GL$ in \cite[Prop.\ A.2.1]{Kob90}, and $L(\Hfou\beta,1) \subset \GL$ for any $\beta$. Therefore, it follows that $L(\Hsec1,4) \pf L(\Hfou\beta,1)$ for any $\beta$.
\end{proof}
\begin{proof}[Proof of (c).]
    Since $L(\Hsec{1},4) \subset L(\Hfou{-1},5), L(\Hsec{1},2) \subset L(\Hfou1,2)$ modulo $\sim$, the statements (2),(4) follows from Claim~\ref{cl:mt7}~(2),(4).

    We prove (1)$^\prime$,(3). If $|\alpha| = 3$, then the pairs are not (CI), and hence not proper. If $|\alpha|>3$, then the pairs are proper by Theorem~\ref{thm:RCI}.
    Assume $|\alpha| < 3$. Then, we have $L(\Hsec\alpha,3) \subset L(\Hfou{\alpha},4)$ and $L(\Hsec\alpha,1) \subset L(\Hfou\alpha,1)$ modulo $\sim$. By Claim~\ref{cl:mt7}~(1) and (3), we obtain $L(\Hsec{\alpha},3) \pf L(\Hfou{-3},3)$ and $L(\Hsec{\alpha},1) \pf L(\Hfou{-3},6)$ if $|\alpha| < 3$. 
\end{proof}

For the proof of (b), we prepare a lemma. The proof is parallel to Proposition~\ref{prop:mt7prop}, so is omitted.

\begin{lem}\label{lem:mt8b}
    Let $|\alpha| \leq 1$. Assume sequences $l_n= \begin{pmatrix} e^{(\alpha +1)t_n} & 0 \\ 0 & e^{(\alpha -1)t_n}\end{pmatrix}$ and $l^\prime_n = \begin{pmatrix} 1 & 0 \\ u_n^\prime & e^{-2t_n^\prime} \end{pmatrix}
    $
    satisfy $l_n \in S l^\prime_nS^{-1}$ for all $n \in \N$ and some compact set $S$ of $\GL$.
    Then, we obtain $t_n^\prime \asymp_+ -\alpha t_n$.
\end{lem}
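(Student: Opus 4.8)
The plan is to obtain the single required estimate exactly as estimate (2) is obtained in the proof of Proposition~\ref{prop:mt7prop}, namely by extracting the relation between the determinants of $l_n$ and $l_n'$ from the Cartan projection. Indeed, the conclusion $t_n'\asymp_+-\alpha t_n$ is precisely the $\beta=-1$ instance of the estimate $t_n'\asymp_+\frac{\alpha}{\beta}t_n$ proved there, since $l_n'$ lies in $\Hfou{-1}$.

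First I would compute the coordinate sums of the two Cartan projections. Writing $\mu(g)=\bigl(\tfrac12\log\sigma_1(g),\tfrac12\log\sigma_2(g)\bigr)$, the sum of the two coordinates is $\tfrac12\log\bigl(\sigma_1(g)\sigma_2(g)\bigr)=\tfrac12\log\det({}^tg\,g)=\log|\det g|$. For the diagonal element $l_n$ one has $\det l_n=e^{(\alpha+1)t_n}e^{(\alpha-1)t_n}=e^{2\alpha t_n}$, so the coordinate sum of $\mu(l_n)$ equals $2\alpha t_n$; for $l_n'$ one has $\det l_n'=e^{-2t_n'}$, so the coordinate sum of $\mu(l_n')$ equals $-2t_n'$.

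Next, since $l_n\in S\,l_n'\,S^{-1}$ with $S\subset\GL$ compact, Fact~\ref{fact:ue} gives a fixed $R>0$ with $\mu(l_n)\in\bar{B}_R(\mu(l_n'))$; in particular each coordinate of $\mu(l_n)$ agrees with the corresponding coordinate of $\mu(l_n')$ up to a bounded error. Adding the two coordinatewise estimates yields $2\alpha t_n\asymp_+-2t_n'$, and dividing by $2$ gives $t_n'\asymp_+-\alpha t_n$, as claimed.

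I do not expect a genuine obstacle here; the computation is routine once the two identities (coordinate sum of $\mu$)$=\log|\det|$ and (compactness of $S$)$\Rightarrow$(boundedness of $\log|\det s|$ on $S$) are in hand. In fact the same conclusion can be reached directly, without Fact~\ref{fact:ue}: writing $l_n=s_n\,l_n'\,(s_n')^{-1}$ with $s_n,s_n'\in S$ and taking $\log|\det(\cdot)|$ of both sides, the boundedness of $\log|\det|$ on the compact set $S$ gives $2\alpha t_n\asymp_+-2t_n'$ at once. Note finally that the hypothesis $|\alpha|\le 1$ plays no role in this particular estimate; it is carried along only because the lemma is invoked in that range in the proof of Proposition~\ref{prop:mt8}~(b).
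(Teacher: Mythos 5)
Your proof is correct and takes essentially the same route as the paper: the paper omits the proof of Lemma~\ref{lem:mt8b} as ``parallel to Proposition~\ref{prop:mt7prop}'', and your argument reproduces exactly the step of that proof yielding its estimate~(2) --- apply Fact~\ref{fact:ue}, add the two coordinates of the Cartan projection so the sum becomes $\log\lvert\det\rvert$, and read off $2\alpha t_n \asymp_+ -2t_n^\prime$. Your side remarks are also accurate: taking $\log\lvert\det\rvert$ directly in $l_n = s_n\,l_n^\prime\,(s_n^\prime)^{-1}$ bypasses Fact~\ref{fact:ue}, and the hypothesis $\lvert\alpha\rvert\le 1$ is indeed not used for this estimate.
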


\begin{proof}[Proof of (b).] 
If $\alpha = 0 $, then we have 
\begin{align*}
    SL_2(\R) &\sim \{ \begin{pmatrix} e^x & 0 & 0 \\ 0 & e^{-x}&0 \end{pmatrix}\colon x \in \R\} = L(\Hsec0,1), \\ 
    SL_2(\R) &\sim \{ \begin{pmatrix} 1 & 0 & 0 \\ x & 1&0 \end{pmatrix}\colon x \in \R\} \subset L(\Hfou{-1},5).
\end{align*}
Therefore, we obtain $L(\Hsec0,1) \not\pf L(\Hfou{-1},5)$.

Assume $\alpha \neq 0$, and $L(\Hsec\alpha,1) \not\pf L(\Hfou{-1},5)$. Then, we obtain sequences $g_n,g_n^\prime$ of the form 
\[ g_n =\begin{pmatrix} e^{(\alpha +1)t_n} & 0 & 0 \\
0 & e^{(\alpha -1)t_n} & 0\end{pmatrix}, \quad g_n^\prime = \begin{pmatrix} 1 & 0 & 2t_n^\prime \\ u_n^\prime & e^{-2t_n^\prime} & v_n^\prime \end{pmatrix},
\]
and $s_n, s_n^\prime \in S$ where $S$ is a compact subset of $\GR$ such that $s_ng_n = g_n^\prime s_n^\prime$. Look at the linear part, and we have $t_n^\prime \asymp_+ -\alpha t_n$ by Lemma~\ref{lem:mt8b}. Look at the translational part, and we have 
\[ \begin{pmatrix} 2t_n^\prime \\ v_n^\prime \end{pmatrix} + \begin{pmatrix} 1& 0 \\ u_n^\prime &e^{-2t_n^\prime}\end{pmatrix} \U(s_n^\prime) \asymp_+ \Lin(s_n)\begin{pmatrix} 0 \\ 0 \end{pmatrix} \asymp_+ \begin{pmatrix} 0 \\ 0\end{pmatrix}.\]
The right-hand side is bounded, whereas the left-hand side is diverges as $t_n\to \infty$ since $t_n^\prime \asymp_+ -\alpha t_n$ and $\alpha \neq 0$. This is a contradiction.
\end{proof}

For the proof of (d), we prepare the following lemma.
\begin{lem}\label{lem:mt8d}
    Assume that sequences $l_n=\begin{pmatrix}e^{2t_n} & 0 \\ 0 & 1 \end{pmatrix}, l=\begin{pmatrix} 1 & 0 \\ u_n^\prime &  e^{-2t_n^\prime}\end{pmatrix} \in \GL$ and $r_n=\begin{pmatrix} a_n & b_n \\ c_n & d_n\end{pmatrix},r_n^\prime$ satisfy that 
    \begin{itemize}
        \item $ t_n\to \infty$,
        \item $l_nr_n=r_n^\prime l_n^\prime$,
        \item the sequences $r_n,r_n^\prime$ are contained in some compact set in $\GL$.
    \end{itemize}
    Then, we have $|b_n| \asymp_\times 1.$
\end{lem}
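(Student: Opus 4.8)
The plan is to read the identity $l_n r_n = r_n' l_n'$ off entrywise and track how the blow-up factor $e^{2t_n}$ is absorbed by the various entries; the lower estimate $|b_n| \gtrsim_\times 1$ is the only nontrivial point, since $r_n$ being bounded already gives $|b_n| \lesssim_\times 1$. Writing $r_n' = \begin{pmatrix} a_n' & b_n' \\ c_n' & d_n' \end{pmatrix}$ and multiplying out, the identity is equivalent to the four scalar equations
\[ a_n e^{2t_n} = a_n' + b_n' u_n', \quad b_n e^{2t_n} = b_n' e^{-2t_n'}, \quad c_n = c_n' + d_n' u_n', \quad d_n = d_n' e^{-2t_n'}. \]
Since $r_n$ and $r_n'$ lie in a compact subset of $\GL$, all of $a_n, b_n, c_n, d_n, a_n', b_n', c_n', d_n'$ are bounded and $|\det r_n|, |\det r_n'| \asymp_\times 1$.

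First I would calibrate $t_n'$ by taking determinants of the two sides: $e^{2t_n}\det r_n = e^{-2t_n'}\det r_n'$, and since both determinants are $\asymp_\times 1$ this yields $e^{-2t_n'} \asymp_\times e^{2t_n}$ (equivalently $t_n' \asymp_+ -t_n$). Feeding this into the $(2,2)$-equation $d_n = d_n' e^{-2t_n'}$ and using that $d_n$ is bounded forces $|d_n'| \lesssim_\times e^{-2t_n}$, so that $d_n' \to 0$.

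The crux, and the step carrying the real content, is to convert the vanishing of $d_n'$ into a positive lower bound for $b_n'$ via invertibility of $r_n'$. From $\det r_n' = a_n' d_n' - b_n' c_n' \asymp_\times 1$ together with $a_n' d_n' \to 0$, the triangle inequality gives $|b_n' c_n'| \geq |\det r_n'| - |a_n' d_n'| \gtrsim_\times 1$ for large $n$; since $c_n'$ is bounded, this yields $|b_n'| \gtrsim_\times 1$, hence $|b_n'| \asymp_\times 1$. Finally, dividing the $(1,2)$-equation $b_n e^{2t_n} = b_n' e^{-2t_n'}$ by the common factor $e^{2t_n}$ and using $e^{-2t_n'} \asymp_\times e^{2t_n}$ gives $|b_n| \asymp_\times |b_n'| \asymp_\times 1$.

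The only delicate bookkeeping is that the inequality $|b_n' c_n'| \gtrsim_\times 1$ is legitimate precisely because $a_n' d_n'$ tends to $0$ genuinely, so that it is a triangle-inequality estimate and not an additive rearrangement under $\lesssim_\times$ of the kind warned against in the Remark in Section~3; the final cancellation of $e^{2t_n}$, being the division by a common positive factor, is a bona fide comparison of ratios and hence always permitted. I expect this asymptotic bookkeeping, rather than any substantial estimate, to be the only subtle part of the argument.
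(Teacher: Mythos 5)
Your proof is correct, and it establishes the lemma as literally stated; note first a discrepancy in the paper itself: the printed hypothesis is $l_nr_n=r_n^\prime l_n^\prime$, but the paper's own proof (and the application in the proof of Proposition~\ref{prop:mt8}~(d), where the relation arises as $s_ng_n=g_n^\prime s_n^\prime$) actually manipulates the mirrored identity $r_nl_n=l_n^\prime r_n^\prime$, so the statement contains a typo. Both versions are true, and your bookkeeping adapts verbatim to the mirrored form, where the factor $e^{2t_n}$ sits in the first column of $r_nl_n$: there the paper bounds $|a_n|\lesssim_\times e^{-2t_n}$ by a row-norm estimate (Proposition~\ref{prop:normcpt}, since the first row of $l_n^\prime r_n^\prime$ is just the first row of $r_n^\prime$) and then applies the cofactor inequality $|b_n|\gtrsim_\times \left(\,\left|\det r_n\right|-|a_n||d_n|\,\right)/|c_n|\gtrsim_\times 1$ to $r_n$ itself, whereas you kill $d_n^\prime$ via the $(2,2)$-equation, deduce $|b_n^\prime|\asymp_\times 1$ from $|\det r_n^\prime|\asymp_\times 1$, and transfer to $b_n$ through the $(1,2)$-equation; these are parallel arguments of the same weight. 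The one genuinely different ingredient in your write-up is the calibration $t_n^\prime\asymp_+ -t_n$: you derive it from the elementary determinant identity $e^{2t_n}\det r_n=e^{-2t_n^\prime}\det r_n^\prime$ together with $|\det r_n|,|\det r_n^\prime|\asymp_\times 1$, while the paper invokes the uniform estimate of the Cartan projection (Fact~\ref{fact:ue}); your route is more self-contained, works equally in either form of the hypothesis, and is a genuine simplification here (the Cartan-projection estimate is only really needed elsewhere in the paper, where the conjugating matrices are not triangular enough for a determinant shortcut). Your closing caution is also sound: the bound $|b_n^\prime c_n^\prime|\geq|\det r_n^\prime|-|a_n^\prime d_n^\prime|\gtrsim_\times 1$ is a legitimate triangle-inequality estimate, since $a_n^\prime d_n^\prime\to 0$ while $|\det r_n^\prime|$ stays bounded below, and not the additive rearrangement under $\lesssim_\times$ warned against in Section~3.
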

\begin{proof}
    Since $r_n$ is contained in some compact set in $\GL$, $b_n$ is bounded above.
    By the uniform estimate of Cartan projection~(Fact~\ref{fact:ue}), we obtain $t_n^\prime \asymp_+ -t_n$. We have
    \[ \begin{pmatrix} a_ne^{2t_n} & b_n \\ c_ne^{2t_n} & d_n\end{pmatrix} = \begin{pmatrix} 1 & 0 \\ u_n^\prime & e^{-2t_n^\prime}\end{pmatrix} r_n^\prime.\]
    Considering the norm of the first column, it follows that
    \[ \Vmat{ a_ne^{2t_n} & b_n} \asymp_\times 1.\]
    Thus, we have $|a_n| \lesssim_\times e^{-2t_n}$, and hence
    \[ |b_n|\gtrsim_\times \frac{\begin{vmatrix} a_n & b_n \\c_n & d_n\end{vmatrix}-|a_n||d_n|}{|c_n|} \gtrsim_\times 1.\]
\end{proof}
We prove Proposition~\ref{prop:mt8}~(d).
\begin{proof}[Proof of (d).]
    If $|\beta|<1$, then the pair $(L(\Hsec1, 2), L(\Hfou\beta,4))$ is proper by Theorem~\ref{thm:RCI}.
    If $\beta=1$, then we can check that the pair is not (CI), and hence it is not proper.

    Assume that $\beta = -1$. We prove that the pair is proper. If not, we can take sequences $g_n=\begin{pmatrix}e^{2t_n} & 0 & 0\\0 &1 & 2t_n  \end{pmatrix} \in L(\Hsec1, 2), g_n^\prime = \begin{pmatrix} 1 & 0 & 0 \\ u_n^\prime & e^{-2t_n^\prime} & v_n^\prime \end{pmatrix} \in L(\Hfou\beta,4)$ and $s_n,s_n^\prime$ contained in some compact set of $\GR$ such that $s_ng_n=g_n^\prime s_n^\prime$. By the above lemma, we obtain $|b_n|\asymp_\times 1$. Consider the translational part of the equation $s_ng_n=g_n^\prime s_n^\prime$, we obtain 
    \[ \begin{pmatrix} 0 \\ v_n^\prime\end{pmatrix}+ \begin{pmatrix} 1 & 0  \\ u_n^\prime & e^{-2t_n^\prime} \end{pmatrix}\U(s_n^\prime) \asymp_+ \begin{pmatrix} a_n & b_n \\ c_n & d_n\end{pmatrix}\begin{pmatrix} 0 \\ 2t_n \end{pmatrix},\]
    where we write $\Lin(s_n) =\begin{pmatrix} a_n & b_n \\ c_n & d_n\end{pmatrix}$.
    The first entry of the left-hand side is bounded, but that of the right-hand side $2b_nt_n$ is not bounded by Lemma~\ref{lem:mt8d}, which is a contradiction. Therefore, we obtain  $L(\Hsec1, 2) \pf L(\Hfou{-1},4)$ in $\GR$.

    Finally, we prove $(L(\Hsec1, 2), L(\Hfou\beta,4))$ is not proper if $|\beta| >1$. Since $\Hsec1 \not\pf  \Hfou\beta$ in  $\GL$ by properness criterion (Fact~\ref{fact:pc}), there exist sequences 
    \[ l_n=\begin{pmatrix} e^{2t_n} & 0 \\ 0 & 1\end{pmatrix}, l_n^\prime= \begin{pmatrix} e^{(\beta +1)t_n^\prime} & 0 \\ u_n^\prime & e^{(\beta -1)t_n^\prime}\end{pmatrix}\]
    and 
    \[ r_n = \begin{pmatrix} a_n & b_n \\ c_n & d_n\end{pmatrix}, r_n^\prime\]
    contained in some compact set in $\GL$, such that $t_n\to \infty$ and $r_nl_n = l_n^\prime r_n^\prime$. By Lemma~\ref{lem:mt8b}, we obtain $t_n^\prime \asymp_+ \frac{t_n}{\beta}$ and $u_n^\prime \asymp_\times\exp(2t_n)$. We set 
    \[ g_n = \begin{pmatrix} e^{2t_n} & 0 & 0 \\ 0 & 1 & 2t_n\end{pmatrix}, g_n^\prime = \begin{pmatrix} e^{(\beta +1) t_n^\prime} & 0 & 0 \\ u_n^\prime & e^{(\beta -1) t_n^\prime} & d_nt_n - u_n^\prime \frac{b_nt_n}{e^{(\beta +1)t_n^\prime}}\end{pmatrix}, \]
    \[ s_n= \left(
\begin{array}{cc}
r_n& \begin{matrix} 0 \\ 0 \end{matrix}
\end{array}
\right), s_n^\prime =\left( \begin{array}{cc} r_n^\prime &\begin{matrix}  \frac{b_nt_n}{e^{(\beta+1)t_n^\prime}}  \\ 0\end{matrix}\end{array}\right).\]
    Then, it follows that $s_ng_n = g_n^\prime s_n^\prime$. Moreover, $\frac{b_nt_n}{e^{(\beta+1)t_n^\prime}}$ is bounded as $t_n\to \infty$. Indeed, we have
    \[ \left|\frac{b_n t_n}{e^{(\beta +1)t_n^\prime}}\right| \asymp_\times \frac{|b_n||t_n|}{e^{( \frac{1}{\beta} +1)t_n}} \lesssim_\times 1.  \]
    The last evaluation holds since $\frac{1}{\beta} +1> 0$ for $|\beta| >1$. Therefore, we have $L(\Hsec1,2) \not\pf L(\Hfou\beta,4)$ for $|\beta| >1$.
\end{proof}

Before the proof of (e), we prepare a lemma considering the asymptotic behavior of the linear parts. This lemma follows immediately by the uniform estimate of Cartan projection (Fact~\ref{fact:ue}), so we omit its proof.

\begin{lem}\label{lem:abs't'}
    Assume that  the sequences $l_n,l_n^\prime$ are of the form
    \[     l_n=\begin{pmatrix} e^{(\alpha+1)t_n} & 0 \\ 0& e^{(\alpha-1)t_n} \end{pmatrix}, \quad
        l_n^\prime=\begin{pmatrix}  e^{(\beta+1)t_n^\prime} & 0 \\ u_n^\prime & e^{(\beta-1)t_n^\prime} \end{pmatrix},\]
    and satisfies $r_nl_n= l_n^\prime r_n^\prime$ for some sequences $r_n,r_n^\prime$ contained in some compact set of $\GL$, then:
   \begin{enumerate}
\item If $\lvert\alpha\rvert<\lvert\beta\rvert$, we have the following estimates
\[
  t_n^\prime \asymp_+ \frac{\alpha}{\beta}\,t_n, 
  \qquad
  u_n^\prime \asymp_\times \exp\!\big((\alpha +1)t_n\big).
\]

\item If $\lvert\alpha\rvert=\lvert\beta\rvert$, we still have
\[
  t_n^\prime \asymp_+ \frac{\alpha}{\beta}\,t_n,
\]
and moreover
\[
  u_n^\prime \lesssim_\times \exp\!\big((\alpha +1)t_n\big).
\]
\end{enumerate}
\end{lem}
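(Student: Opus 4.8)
The plan is to derive both estimates from Fact~\ref{fact:ue} by comparing the two coordinates of $\mu(l_n)$ and $\mu(l_n')$. First I would rewrite the hypothesis $r_n l_n = l_n' r_n'$ as $l_n' = r_n l_n (r_n')^{-1}$; enlarging the compact set containing $r_n,r_n'$ to a compact set $C$ closed under inversion places $l_n'$ in $C l_n C$, so Fact~\ref{fact:ue} supplies $R>0$ with $\mu(l_n')\in\bar{B}_R(\mu(l_n))$. In particular the two coordinates of $\mu(l_n')$ and $\mu(l_n)$ agree up to $\asymp_+$. We may assume $t_n\to+\infty$ (otherwise replace $l_n,l_n'$ by their inverses), so that $\mu(l_n)=\bigl((\alpha+1)t_n,(\alpha-1)t_n\bigr)$.

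Next I would compute $\mu(l_n')$ through the trace and determinant of ${}^t l_n' l_n'$, exactly as in the proof of Lemma~\ref{lem:muh3h4}(2): the determinant is $(\det l_n')^2=e^{4\beta t_n'}$ and the trace is $A_n:=e^{2(\beta+1)t_n'}+e^{2(\beta-1)t_n'}+(u_n')^2$, so the eigenvalues of ${}^t l_n' l_n'$ are $\tfrac12\bigl(A_n\pm\sqrt{A_n^2-4e^{4\beta t_n'}}\bigr)$. Summing the two coordinates of the Cartan projection recovers $\tfrac12\log(\det)$, namely $2\beta t_n'$ for $l_n'$ and $2\alpha t_n$ for $l_n$; matching these by Proposition~\ref{prop:seqasymp}(3) yields $2\beta t_n'\asymp_+2\alpha t_n$, hence $t_n'\asymp_+\frac{\alpha}{\beta}t_n$. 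This gives the $t_n'$-estimate in both cases.

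For $u_n'$ I would match the larger coordinate. Writing $\sigma_1$ for the top eigenvalue of ${}^t l_n' l_n'$, from $\tfrac12 A_n\le\sigma_1\le A_n$ we get $\sigma_1\asymp_\times A_n$; combined with $\log\sigma_1\asymp_+2(\alpha+1)t_n$ and Proposition~\ref{prop:seqasymp}(5) this yields $A_n\asymp_\times e^{2(\alpha+1)t_n}$. Substituting $t_n'\asymp_+\frac{\alpha}{\beta}t_n$, the ratios $\frac{e^{2(\alpha+1)t_n}}{e^{2(\beta+1)t_n'}}$ and $\frac{e^{2(\alpha+1)t_n}}{e^{2(\beta-1)t_n'}}$ have exponents $\asymp_+2\frac{\beta-\alpha}{\beta}t_n$ and $\asymp_+2\frac{\beta+\alpha}{\beta}t_n$. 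A short case split on the sign of $\beta$ shows that for $|\alpha|<|\beta|$ both $\frac{\beta-\alpha}{\beta}$ and $\frac{\beta+\alpha}{\beta}$ are strictly positive, so both exponential summands of $A_n$ are $\lnsim_\times e^{2(\alpha+1)t_n}\asymp_\times A_n$. Then the identity $(u_n')^2=A_n-e^{2(\beta+1)t_n'}-e^{2(\beta-1)t_n'}$ and the cancellation rule ($|a|\lnsim_\times|b|\Rightarrow b\asymp_\times b-a$) force $(u_n')^2\asymp_\times e^{2(\alpha+1)t_n}$, whence $u_n'\asymp_\times e^{(\alpha+1)t_n}$ by Proposition~\ref{prop:seqasymp}(6). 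In the boundary case $|\alpha|=|\beta|$ exactly one of these two exponents vanishes, so one summand is $\asymp_\times A_n$ while the other stays negligible; cancellation is then unavailable and only the trivial bound $(u_n')^2\le A_n$ survives, giving the one-sided estimate $u_n'\lesssim_\times e^{(\alpha+1)t_n}$.

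The main obstacle I expect is the bookkeeping in this final sign analysis rather than any single hard estimate. The statement is asymmetric in $\alpha$ and $\beta$, and the notation $\lesssim_\times$ does not permit rearranging terms, so I must subtract only summands that are genuinely negligible and apply the cancellation rule in the correct direction, instead of manipulating the $\asymp_\times$ relations formally. The degenerate possibility $\beta=0$, which under $|\alpha|\le|\beta|$ forces $\alpha=0$ as well (so that $t_n'\asymp_+\frac{\alpha}{\beta}t_n$ is read as $\beta t_n'\asymp_+\alpha t_n$), together with the reduction to $t_n\to+\infty$, both need a line of care but are routine.
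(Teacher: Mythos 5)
Your core computation---matching both Cartan coordinates via Fact~\ref{fact:ue}, recovering $2\beta t_n'\asymp_+ 2\alpha t_n$ from the determinant (sum of coordinates), identifying $\sigma_1\asymp_\times A_n$ with the trace of ${}^t l_n' l_n'$, and then the sign analysis splitting $|\alpha|<|\beta|$ from $|\alpha|=|\beta|$---is correct and is essentially the argument the paper intends: the paper omits the proof of Lemma~\ref{lem:abs't'} as ``immediate'' from Fact~\ref{fact:ue}, and its model is the proof of Proposition~\ref{prop:mt7prop}, which proceeds exactly this way (there only the one-sided bound on $u_n'$ is needed; your cancellation step via Proposition~3.2(3) correctly upgrades it to $\asymp_\times$ when $|\alpha|<|\beta|$).

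The genuine flaw is the opening reduction ``we may assume $t_n\to+\infty$, otherwise replace $l_n,l_n'$ by their inverses.'' The conclusion is not inversion-invariant: $(l_n')^{-1}=\begin{pmatrix} e^{-(\beta+1)t_n'} & 0 \\ -u_n'e^{-2\beta t_n'} & e^{-(\beta-1)t_n'}\end{pmatrix}$ is again of the stated form, but with new parameters $(-t_n',\,-u_n'e^{-2\beta t_n'})$, so applying the statement to the inverted sequences yields $|u_n'|\asymp_\times e^{2\beta t_n'-(\alpha+1)t_n}\asymp_\times e^{(\alpha-1)t_n}$, not $e^{(\alpha+1)t_n}$. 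Indeed, for $t_n\to-\infty$ the two-sided estimate as stated is \emph{false}: take $\alpha=1$, $\beta=2$, $t_n=-n$, $t_n'=-n/2$, and $(u_n')^2=1+e^{-4n}-e^{-3n}-e^{-n}$; then $l_n$ and $l_n'$ have the same determinant and Frobenius norm, hence the same singular values, so a singular value decomposition produces \emph{orthogonal} $r_n,r_n'$ with $r_nl_n=l_n'r_n'$, yet $u_n'\to 1$ while $e^{(\alpha+1)t_n}=e^{-2n}\to 0$ (and if $t_n$ merely stays bounded, $u_n'\to 0$ is possible, breaking the lower bound). The honest diagnosis is that the lemma as printed omits the hypothesis $t_n\to+\infty$, which is explicit in Proposition~\ref{prop:mt7prop} and is supplied in every application (the proofs of (d), (e), and (f)(4)$'$ all have $t_n\to\infty$); the fix is to add that hypothesis (or state the $t_n\to-\infty$ variant with exponent $\alpha-1$), not to invoke a symmetry that the asymmetric conclusion does not possess. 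With $t_n\to+\infty$ assumed, the remainder of your argument, including the degenerate case $\beta=0$ and the boundary case $|\alpha|=|\beta|$ where only $u_n'\lesssim_\times e^{(\alpha+1)t_n}$ survives, goes through and agrees with the paper.
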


\begin{proof}[Proof of (e).]
By Theorem~\ref{thm:RCI}, we obtain $L(\Hsec\alpha,1) \pf L(\Hfou1,2)$ if $|\alpha|>1$. 

We prove $L(\Hsec1,1) \pf L(\Hfou1,2)$. Assume the contrary, we can take the sequences $g_n=\begin{pmatrix} e^{2t_n} & 0& 0\\ 0 & 1 &0 \end{pmatrix} \in L(\Hsec1,1), g_n^\prime=\begin{pmatrix} e^{2t_n^\prime} & 0 & 0\\ u_n & 1 & 2t_n^\prime\end{pmatrix}\in L(\Hfou1,2)$, and sequences $s_n, s_n^\prime$ contained in some compact set in $\GL$, such that $s_n g_n=g^\prime_n s^\prime_n$ for all $n\in \N$. Consider the translational parts, we obtain
\[ \begin{pmatrix} e^{2t^\prime_n} & 0 \\ u_n^\prime & 1\end{pmatrix}\begin{pmatrix} (w_1)_n \\ (w_2)_n\end{pmatrix}+\begin{pmatrix} 0 \\ 2t^\prime_n\end{pmatrix} \asymp_+ \Lin(s_n)\begin{pmatrix}0 \\ 0 \end{pmatrix} \asymp_+ \begin{pmatrix} 0 \\ 0\end{pmatrix},\]
where we write $\begin{pmatrix} (w_1)_n\\(w_2)_n\end{pmatrix} = \U (s_n^\prime)$. Note that $(w_i)_n$ is a bounded sequence.
Since $|(e^{2t_n^\prime})(w_1)_n|$ is bounded, we have $|(u_n^\prime) (w_1)_n| \lesssim_\times |(e^{2t_n^\prime})(w_1)_n| \lesssim_\times 1$ by Lemma~\ref{lem:abs't'}~(2). Thus, the second entry of the left hand side $(u_n^\prime) (w_1)_n+ (w_2)_n+ 2t_n^\prime \asymp_+ 2t_n^\prime$ is not bounded. However, that of the right hand side is bounded, which is a contradiction. Since $L(\Hsec1,1) \sim L(\Hsec{-1},1)$, we also obtain $L(\Hsec{-1},1) \pf L(\Hfou1,2)$.

We prove $L(\Hsec\alpha,1) \not\pf L(\Hfou1,2)$ if $|\alpha|<1$. If $|\alpha| = 0$, they have non-compact intersection . Assume $0<|\alpha|<1$. Then we obtain sequences $l_n,l_n^\prime,r_n,r_n^\prime$ satisfying the assumption in Lemma~\ref{lem:abs't'}. We set 
    \[ g_n= \begin{pmatrix} l_n & \begin{matrix} 0 \\ 0\end{matrix}\end{pmatrix} ,
    \quad g_n^\prime= \begin{pmatrix} l_n^\prime & \begin{matrix} 0 \\ 2t_n^\prime\end{matrix} \end{pmatrix}\]
    \[ s_n=\begin{pmatrix} r_n &\begin{matrix} -2\frac{t_n^\prime e^{2t_n^\prime}}{u_n^\prime} \\ 0 \end{matrix} \end{pmatrix} , \quad s_n^\prime= \begin{pmatrix} r_n^\prime & \begin{matrix} -\frac{2t_n^\prime}{u_n^\prime} \\ 0\end{matrix}\end{pmatrix}.\]
    Then, we have $s_ng_n=g_n^\prime s_n^\prime$ and 
    \begin{align*} \left| \frac{-2t_n^\prime}{u_n^\prime} \right|&\asymp_\times \frac{|\alpha||t_n|}{\exp((\alpha +1)t_n)} \lnsim_\times 1, \\
    \left| -2 \frac{t^\prime_n e^{2t_n^\prime}}{u_n^\prime}\right| &\asymp_\times \alpha t_n\exp((\alpha -1)t_n) \lnsim_\times 1
    \end{align*}
    by Lemma~\ref{lem:abs't'}~(2) for $|\alpha|<1$.
    Therefore, $s_n,s_n^\prime$ are contained in some compact set, and hence we have $L(\Hsec\alpha,1) \not\pf L(\Hfou1,2)$ if $ 0 < |\alpha| <1$.

\end{proof}
At the end of this section, we prove Proposition~\ref{prop:mt8}~(f).
\begin{proof}[Proof of (f).]
    The statements (1),$\cdots$,(5) immediately follow by Theorem~\ref{thm:RCI}.

    The statement (1)$^\prime$ holds since the linear parts are not proper in $\GL$ by the properness criterion (Fact~\ref{fact:pc}) and the translational parts are trivial.

    The statement (2)$^\prime$ holds when $|\alpha|=0,1$ since the two subgroups have a non-compact intersection, and follows by (e) since $L(\Hsec\alpha, 1) \subset L(\Hsec\alpha,3)$ even when $0<|\alpha|<1$.

    We prove (4)$^\prime$. By the properness criterion (Fact~\ref{fact:ue}), there exist sequences
    \[l_n=\begin{pmatrix}  e^{2t_n} & 0 \\ 0 & 1\end{pmatrix} \in \Hsec{1}, \quad l_n^\prime = \begin{pmatrix}  e^{(\beta +1)t_n^\prime} & 0 \\ u_n^\prime & e^{(\beta -1)t_n^\prime} \end{pmatrix}\in \Hfou{\beta},\]
    and $r_n, r_n^\prime$ contained in some compact set in $\GL$ such that 
    \[ t_n \to \infty , \quad r_nl_n= l_n^\prime r_n^\prime.\]
    We set 
    \[ s_n=\begin{pmatrix}  r_n &\begin{matrix}  0 \\ 0\end{matrix}\end{pmatrix},\quad s_n^\prime=\begin{pmatrix} r_n^\prime& \begin{matrix}0 \\ 0 \end{matrix}\end{pmatrix},
    \]
    \[ g_n= \begin{pmatrix} e^{2t_n} & 0 & 0 \\ 0 & 1& 2t_n\end{pmatrix}, \quad g_n^\prime=\begin{pmatrix} \begin{matrix}e^{(\beta +1)t_n^\prime} & 0 \\ u_n^\prime & e^{(\beta -1)t_n^\prime} \end{matrix} & r_n\begin{pmatrix}0 \\ 2t_n \end{pmatrix}\end{pmatrix}
    \]
    Then, we have $s_ng_n=g_n^\prime s_n^\prime,$ $g_n \in L(\Hsec1,2)$ and $g_n^\prime \in L(\Hfou\beta,7)$ . Thus, we have $L(\Hsec1,2) \not\pf L(\Hfou\beta, 7)$. 

    The proof of (3)$^\prime$ is parallel to the above discussion .

   
\end{proof}

\appendix
\section{Tables of properness for connected subgroup pairs}
This section provides a complete summary of the properness for  all pairs of connected subgroups, presented in table. When a condition involving parameters is written in a cell, it indicates a necessary and sufficient condition for the properness of the corresponding pair.

\begin{table}[h]
\caption{Properness of pairs in the case where $\Lin(L)=\Hfir,  \Lin(H)\subset \SL$}
\label{tab:A11}
\centering
\renewcommand{\arraystretch}{1.5}
\begin{tabular}{c@{\quad}cccccc}
\toprule
\diagbox{$L$}{$H$} & $SL_2(\R)$ & $\R^2$ & $S$ & $L$ & $M$ & $N$ \\
\midrule
$L(\Hfir,1)$ & $\pf$ & $\pf$ & $\pf$ & $\pf$ & $\pf$ &  $\pf$\\
$L(\Hfir,2)$ & $\pf$ &  &  & &  & $\pf$ \\
\bottomrule
\end{tabular}
\end{table}

\begin{table}[h]
\caption{Properness of pairs in the case where $\Lin(L)=\Hsec\gamma, \Lin(H) \subset \SL$}
\label{tab:A1.2}
\centering
\renewcommand{\arraystretch}{1.5}
\begin{tabular}{c@{\quad}cccccc}
\toprule
\diagbox{$L$}{$H$} & $SL_2(\R)$ & $\R^2$ & $S$ & $L$ & $M$ & $N$ \\
\midrule
$L(\Hsec0,1)$  &  & $\pf$ &  &  & $\pf$ & $\pf$ \\
$L(\Hsec\gamma,1)$ if $\gamma \neq 0$ &  $\pf$ & $\pf$ & $\pf$ & $\pf$ & $\pf$ & $\pf$ \\
$L(\Hsec{1},2)$ & $\pf$ & $\pf$ & $\pf$ & $\pf$ & $\pf$ & $\pf$ \\
$L(\Hsec0,3)$ &  &  &  &  &  & $\pf$ \\
$L(\Hsec\gamma,3)$ if $\gamma\neq 0$ & $\pf$ &  &  &  &  & $\pf$ \\
$L(\Hsec{1},4)$ &  $\pf$  &  &  &  &  & $\pf$ \\
$L(\Hsec0,5)$ &    &  &  &  &  &  \\
$L(\Hsec{\gamma},5)$ if $\gamma \neq 0$ & $\pf$ &  &  &  &  & $\pf$ \\
\bottomrule
\end{tabular}
\end{table}

\begin{table}[h]
\caption{Properness of pairs in the case where $\Lin(L)=\Hthi, \Lin(H) \subset \SL$}
\label{tab:A1.3}
\centering
\renewcommand{\arraystretch}{1.5}
\begin{tabular}{c@{\quad}cccccc}
\toprule
\diagbox{$L$}{$H$} & $SL_2(\R)$ & $\R^2$ & $S$ & $L$ & $M$ & $N$ \\
\midrule
$L(\Hthi,1)$ & $\pf$ & $\pf$ & $\pf$ & $\pf$ & $\pf$ & $\pf$ \\
$L(\Hthi,2)$ & $\pf$ &  &  &  &  & $\pf$ \\
$L(\Hthi,3)$ & $\pf$ &  &  &  &  & $\pf$ \\
\bottomrule
\end{tabular}
\end{table}

\begin{table}[hbtp]
\caption{Properness of pairs in the case where $\Lin(L)=\Hfou\gamma, \Lin(H) \subset \SL$}
\label{tab:A1.4}
\centering
\renewcommand{\arraystretch}{1.5}
\begin{tabular}{c@{\quad}cccccc}
\toprule
\diagbox{$L$}{$H$} & $SL_2(\R)$ & $\R^2$ & $S$ & $L$ & $M$ & $N$ \\
\midrule
$L(\Hfou{\gamma},1)$ &  & $\pf$ &  &  & $\pf$ & $\pf$ \\
$L(\Hfou{1},2)$ &  & $\pf$ &  &  & $\pf$ & $\pf$ \\
$L(\Hfou{-3},3)$ & $\pf$ & $\pf$ & $\pf$ & $\pf$ & $\pf$ & $\pf$ \\
$L(\Hfou{\gamma},4)$ &  &  &  &  &  &  \\
$L(\Hfou{-1},5)$ &  &  &  &  &  &  \\
$L(\Hfou{-3},6)$ & $\pf$ &  &  &  &  &  \\
$L(\Hfou{\gamma},7)$ &  &  &  &  &  &  \\
\bottomrule
\end{tabular}
\end{table}

\begin{table}[hbtp]
\caption{Properness of pairs in the case where $\Lin(L)=\Hfif, \Lin(H) \subset \SL$}
\label{tab:A1.5}
\centering
\renewcommand{\arraystretch}{1.5}
\begin{tabular}{c@{\quad}cccccc}
\toprule
\diagbox{$L$}{$H$} & $SL_2(\R)$ & $\R^2$ & $S$ & $L$ & $M$ & $N$ \\
\midrule
$L(\Hfif,1)$ &  &  &  &  &  &  \\
\bottomrule
\end{tabular}
\end{table}

\begin{table}[hbtp]
\caption{Properness of pairs in the case where $\Lin(L)=\Hsix, \Lin(H) \subset \SL$}
\label{tab:A1.6}
\centering
\renewcommand{\arraystretch}{1.5}
\begin{tabular}{c@{\quad}cccccc}
\toprule
\diagbox{$L$}{$H$} & $SL_2(\R)$ & $\R^2$ & $S$ & $L$ & $M$ & $N$ \\
\midrule
$L(\Hsix,1)$ &  &  &  &  &  & $\pf$ \\
\bottomrule
\end{tabular}
\end{table}

\begin{table}[hbtp]
\caption{Properness of pairs in the case where $\Lin(L)=\Hfir, \Lin(H) = \Hsec\gamma$}
\label{tab:A1.7}
\centering
\renewcommand{\arraystretch}{1.5}
\begin{tabular}{c@{\quad}ccccc}
\toprule
\diagbox{$L$}{$H$} & $L(\Hsec{\gamma},1)$ & $L(\Hsec1,2)$ & $L(\Hsec{\gamma},3)$ & $L(\Hsec1,4)$ & $L(\Hsec{\gamma},5)$ \\
\midrule
$L(\Hfir,1)$ & $\pf$ & $\pf$ & $\pf$ & $\pf$ & $\pf$ \\
$L(\Hfir,2)$ & $\pf$ & $\pf$ &  &  &  \\
\bottomrule
\end{tabular}
\end{table}

\begin{table}[hbtp]
\caption{Properness of pairs in the case where $\Lin(L)=\Hfir, \Lin(H) = \Hthi$}
\label{tab:A1.8}
\centering
\renewcommand{\arraystretch}{1.5}
\begin{tabular}{c@{\quad}ccc}
\toprule
\diagbox{$L$}{$H$} & $L(\Hthi,1)$ & $L(\Hthi,2)$ & $L(\Hthi,3)$ \\
\midrule
$L(\Hfir,1)$ & $\pf$ & $\pf$ & $\pf$ \\
$L(\Hfir,2)$ & $\pf$ &  &  \\
\bottomrule
\end{tabular}
\end{table}

\begin{table}[hbtp]
\caption{Properness of pairs in the case where $\Lin(L)=\Hfir, \Lin(H) = \Hfou\gamma$}
\label{tab:A1.9}
\centering
\renewcommand{\arraystretch}{1.5}
\begin{adjustbox}{max width=\linewidth} 
\begin{tabular}{c@{\quad}ccccccc}
\toprule
\diagbox{$L$}{$H$} 
& $L(\Hsec{\gamma},1)$ 
& $L(\Hfou{1},2)$ 
& $L(\Hfou{-3},3)$ 
& $L(\Hfou{\gamma},4)$ 
& $L(\Hfou{-1},5)$ 
& $L(\Hfou{-3},6)$ 
& $L(\Hfou{\gamma},7)$ \\
\midrule
$L(\Hfir,1)$ & $\pf$ & $\pf$ & $\pf$ & $\pf$ & $\pf$ &  $\pf$& $\pf$ \\
$L(\Hfir,2)$ & $\pf$ & $\pf$ & $\pf$ &  &  &  &  \\
\bottomrule
\end{tabular}
\end{adjustbox}
\end{table}

\begin{table}[hbtp]
\caption{Properness of pairs in the case where $\Lin(L)=\Hsec\gamma, \Lin(H) = \Hthi$}
\label{tab:A1.10}
\centering
\renewcommand{\arraystretch}{1.5}
\begin{tabular}{c@{\quad}ccc}
\toprule
\diagbox{$L$}{$H$} & $L(\Hthi,1)$ & $L(\Hthi,2)$ & $L(\Hthi,3)$ \\
\midrule
$L(\Hsec{\gamma},1)$ & $\pf$ & $\pf$ & $\pf$ \\
$L(\Hsec{1},2)$ & $\pf$ & $\pf$ & $\pf$ \\
$L(\Hsec{\gamma},3)$ & $\pf$ &  &  \\
$L(\Hsec{1},4)$ & $\pf$ &  &  \\
$L(\Hsec{\gamma},5)$ & $\pf$ &  &  \\
\bottomrule
\end{tabular}
\end{table}

\begin{table}[hbtp]
\caption{Properness of pairs in the case where $\Lin(L)=\Hthi, \Lin(H) = \Hfou\gamma$}
\label{tab:A1.11}
\centering
\renewcommand{\arraystretch}{1.5}
\begin{adjustbox}{max width=\linewidth} 
\begin{tabular}{c@{\quad}ccccccc}
\toprule
\diagbox{$L$}{$H$} 
& $L(\Hfou{\gamma},1)$ 
& $L(\Hfou{1},2)$ 
& $L(\Hfou{-3},3)$ 
& $L(\Hfou{\gamma},4)$ 
& $L(\Hfou{-1},5)$ 
& $L(\Hfou{-3},6)$ 
& $L(\Hfou{\gamma},7)$ \\
\midrule
$L(\Hthi,1)$ & $\pf$ & $\pf$ & $\pf$ & $\pf$ & $\pf$ & $\pf$ & $\pf$ \\
$L(\Hthi,2)$ & $\pf$ & $\pf$ & $\pf$ &  &  &  &  \\
$L(\Hthi,3)$ & $\pf$ & $\pf$ & $\pf$ &  &  &  &  \\
\bottomrule
\end{tabular}
\end{adjustbox}
\end{table}

\begin{table}[hbtp]
\caption{Properness of pairs in the case where $\Lin(L)=\Hsec\alpha, \Lin(H) = \Hsec\beta$}
\label{tab:A1.12}
\centering
\renewcommand{\arraystretch}{1.5}
\begin{tabular}{c@{\quad}ccccc}
\toprule
\diagbox{$L$}{$H$} 
& $L(\Hsec{\beta},1)$ 
& $L(\Hsec{1},2)$ 
& $L(\Hsec{\beta},3)$ 
& $L(\Hsec{1},4)$ 
& $L(\Hsec{\beta},5)$ \\
\midrule
$L(\Hsec{\alpha},1)$ & $|\alpha|\ne|\beta|$ & $\pf$ & $|\alpha|\ne|\beta|$ & $\pf$ & $|\alpha|\ne|\beta|$ \\
$L(\Hsec{1},2)$ & $\pf$ &  & $|\beta| \ne 1$ &  & $|\beta|\ne 1$ \\
$L(\Hsec{\alpha},3)$ & $|\alpha|\ne|\beta|$ & $|\alpha| \ne 1$ &  &  &  \\
$L(\Hsec{1},4)$ & $\pf$ &  &  &  &  \\
$L(\Hsec{\alpha},5)$ & $|\alpha|\ne|\beta|$ & $|\alpha|\ne 1$ &  &  &  \\
\bottomrule
\end{tabular}
\end{table}

\begin{table}[hbtp]
\caption{Properness of pairs in the case where $\Lin(L)=\Hsec\alpha, \Lin(H) = \Hfou\beta$}
\label{tab:A1.13}
\centering
\renewcommand{\arraystretch}{1.5}
\begin{adjustbox}{max width=\linewidth} 
\begin{tabular}{c@{\quad}ccccccc}
\toprule
\diagbox{$L$}{$H$} 
& $L(\Hfou{\beta},1)$ 
& $L(\Hfou{1},2)$ 
& $L(\Hfou{-3},3)$ 
& $L(\Hfou{\beta},4)$ 
& $L(\Hfou{-1},5)$ 
& $L(\Hfou{-3},6)$ 
& $L(\Hfou{\beta},7)$ \\
\midrule
$L(\Hsec{\alpha},1)$ 
& $|\alpha|>|\beta|$ 
& $|\alpha|\ge 1$ 
& $|\alpha|\ne 3$ 
& $|\alpha|>|\beta|$ 
& $\alpha\ne 0$ 
& $|\alpha|\ne 3$ 
& $|\alpha|>|\beta|$ \\
$L(\Hsec{1},2)$ 
& $\pf$ 
&  
& $\pf$ 
& $-1 \le \beta < 1$ 
&  
& $\pf$ 
& $1 > |\beta|$ \\
$L(\Hsec{\alpha},3)$ 
& $|\alpha|>|\beta|$ 
& $|\alpha|>1$ 
& $|\alpha|\ne 3$ 
&  
&  
&  
&  \\
$L(\Hsec{1},4)$ 
& $\pf$ 
&  
&  $\pf$
&  
&  
&  
&  \\
$L(\Hsec{\alpha},5)$ 
& $|\alpha|>|\beta|$ 
& $|\alpha|>1$ 
& $|\alpha|>3$ 
&  
&  
&  
&  \\
\bottomrule
\end{tabular}
\end{adjustbox}
\end{table}

\begin{table}[hbtp]
\caption{Properness of pairs in the case where $\Lin(L)=\Hfou\alpha, \Lin(H) = \Hfou\beta$}
\label{tab:A1.14}
\centering
\renewcommand{\arraystretch}{1.5}
\begin{adjustbox}{max width=\linewidth} 
\begin{tabular}{c@{\quad}ccccccc}
\toprule
\diagbox{$L$}{$H$} 
& $L(\Hfou{\beta},1)$ 
& $L(\Hfou{1},2)$ 
& $L(\Hfou{-3},3)$ 
& $L(\Hfou{\beta},4)$ 
& $L(\Hfou{-1},5)$ 
& $L(\Hfou{-3},6)$ 
& $L(\Hfou{\beta},7)$ \\
\midrule
$L(\Hfou{\alpha},1)$ &  &  & $|\alpha| < 3$ &  &  & $|\alpha| < 3$ &  \\
$L(\Hfou{1},2)$ &  &  & $\pf$ &  &  & $\pf$ &  \\
$L(\Hfou{-3},3)$ & $|\beta| < 3$ & $\pf$ & & $|\beta| < 3$ & $\pf$ &  &  \\
$L(\Hfou{\alpha},4)$ &  &  & $|\alpha| < 3$ &  &  &  &  \\
$L(\Hfou{-1},5)$ &  &  & $\pf$ &  &  &  &  \\
$L(\Hfou{-3},6)$ & $|\beta| < 3$ & $\pf$ &  &  &  &  &  \\
$L(\Hfou{\alpha},7)$ &  &  &  &  &  &  &  \\
\bottomrule
\end{tabular}
\end{adjustbox}
\end{table}

\clearpage
\bibliography{references} 
\bibliographystyle{myamsalpha}

\begin{flushleft}
\textsc{Shunsuke Miyauchi}\\
Graduate School of Mathematical Sciences, The University of Tokyo\\
3-8-1 Komaba, Meguro-ku, Tokyo 153-8914, Japan\\
Email: \texttt{miyashun@ms.u-tokyo.ac.jp}
\end{flushleft}

\end{document}